\documentclass{amsart}
\usepackage{amsmath,amssymb, amsthm}
\usepackage{fullpage}
\usepackage{url}
\usepackage{cleveref}
\usepackage{array}
\usepackage{booktabs}
\usepackage{diagbox}
\usepackage{tikz}
\usepackage{graphicx} 

\newtheorem{Tma}{Theorem}[section]

\newtheorem{lemma}[Tma]{Lemma}

\newtheorem{proposition}[Tma]{Proposition}

\newtheorem{theorem}[Tma]{Theorem}

\theoremstyle{definition}
\newtheorem{definition}[Tma]{Definition}
\newtheorem{example}[Tma]{Example}

\newtheorem{remark}[Tma]{Remark}

\title{Prime simplicial complexes of finite groups}
\author{Melissa Lee}
\address{School of Mathematics, Monash University, Clayton VIC 3800, Australia}
\email{melissa.lee@monash.edu}
\author{Kamilla Rekv\'enyi} 
\address{Department of Mathematics, University of Manchester, M13 9PL Manchester, UK. Also
affiliated with: Heilbronn Institute for Mathematical Research, BS8 1UG Bristol, UK.}
\email{kamilla.rekvenyi@manchester.ac.uk}
\date{\today}
\begin{document}
\begin{abstract}
The prime simplicial complex $\Pi(G)$ of a finite group $G$ is composed of all sets of primes $S$ where $G$ has an element of order the product of primes in $S$, with the subsets partially ordered by inclusion. This complex was introduced by Peter Cameron as the generalisation of the well-studied prime (or Gruenberg-Kegel) graphs.  
In this paper, we establish new results concerning two key properties of $\Pi(G)$: recognisability and purity. We demonstrate that recognisability by the prime simplicial complex is strictly stronger than recognisability by the prime graph. Notably, we present the first known example of a group that is recognisable by its prime simplicial complex and spectrum, but not by its prime graph, and is not a direct product of two isomorphic simple groups.
Furthermore, we classify groups with pure prime simplicial complexes (i.e., all maximal simplices have the same size) across several infinite families of finite simple groups. We also provide a partial classification for non-abelian finite simple groups whose prime simplicial complex has maximal simplices of size at most 2.
\end{abstract}
\maketitle

\section{Introduction}
The \textit{prime graph}, or \textit{Gruenberg-Kegel graph} $\Gamma(G)$ of a finite group $G$ has vertex set consisting of the set of primes dividing $|G|$, denoted $\pi(G)$, with edges between primes $p$, $r$ if there is an element of order $pr$ in $G$.
Prime graphs have garnered significant interest in the past few years \cite{MR4506711}. They are also amongst several different graphs on groups that have been investigated in recent years, see \cite{cameron2022graphs} for an overview.
In 2024, Cameron \cite{cameron2024simplicial} proposed generalisations of various graphs defined on groups to \textit{simplicial complexes}. In this paper, we explore the \textit{prime simplicial complex} $\Pi(G)$ of a finite group $G$, which can be considered as an intermediate between the prime graph and \textit{spectrum}, the set of all element orders, of $G$. The prime simplicial complex $\Pi(G)$ is composed of all sets of primes $S \subseteq \pi(G)$ where $G$ has an element of order the product of primes in $S$, with the subsets partially ordered by inclusion.

We investigate two distinct properties of prime simplicial complexes for finite groups. 
The first is \textit{recognisability}, which has been an area of focus for prime graphs and spectra for some time.
 A finite group $G$ is said to be \textit{$k$-recognisable} by prime graph if there are exactly $k$ groups $H$ up to isomorphism with 
$\Gamma(G) = \Gamma(H)$. We usually call a 1-recognisable group simply \textit{recognisable}, and say that $G$ is unrecognisable if it is not $k$-recognisable for any finite $k$. Cameron and Maslova \cite{MR4506711} showed that 
a group is $k$-recognisable for some finite $k$ only if it is almost simple. A significant number of results have appeared in the literature determining the recognisability for various almost simple groups, however, this problem is still open in general. The question of recognisability has also been explored for the \textit{spectrum}, or set of element orders, of finite groups. In contrast to the situation for prime graphs, there exist groups that are not almost simple but are recognisable by spectrum. The known examples are direct products of isomorphic simple groups, in particular $\mathrm{J}_4\times \mathrm{J}_4$, \cite{gorshkov2021group} and $\mathrm{J}_1\times \mathrm{J}_1$ \cite{li2024recognition}, as well as the infinite families ${}^2\mathrm{B}_2(2^m) \times {}^2\mathrm{B}_2(2^m)$ with $m=3$ or $m\geq 7$ odd \cite{wang2022criterion}, ${}^2\mathrm{G}_2(3^a) \times {}^2\mathrm{G}_2(3^a)$ with $a\geq 3$ odd \cite{li2024recognition} and $\mathrm{PSL}_n(2)^k$ with $n\geq 56k^2$ a power of two \cite{yang2023recognition}. 

 We consider the question of recognisability for prime simplicial complexes of groups in this setting. We show that the notion of recognisability here is inequivalent to that for prime graphs and spectra, and therefore provide an additional tool for recognising a group. In doing so, we also exhibit the first known example of a group which is recognisable by prime simplicial complex and spectrum that is not the direct product of isomorphic simple groups.

We also explore the notion of \textit{purity} of a prime simplicial complex. The prime simplicial complex $\Pi(G)$ of a finite group $G$ is \textit{pure} if all maximal prime simplices (with respect to inclusion) to have the same cardinality. We completely determine the purity of the alternating and sporadic simple groups, as well as some results for groups of Lie type.

The paper is structured in a straightforward manner: some preliminary results are given in \Cref{sec:prelims}, the question of recognisability is considered in \Cref{sec:recog} and that of purity in \Cref{sec:purity}.

\subsubsection*{Acknowledgements} The first author  acknowledges the support of an Australian Research Council Discovery Early Career Researcher Award (project number DE230100579). The second author acknowledges the support of a Robert Bartnik Fellowship that supported her visit to Monash University, where the majority of this work was carried out. We thank Professor Peter Cameron for helpful discussions.
\section{Preliminaries}
\label{sec:prelims}
Given a finite group $G$, let $\pi(G)$ denote the primes dividing the order of $G$.
We begin by defining our main object of study.
\begin{definition}
Given a finite group $G$, the \textit{prime simplicial complex} $\Pi(G)$ of $G$ is a partially ordered set, consisting of all subsets $S = \{p_1, p_2, \dots, p_k\} \subseteq \pi(G)$ such that $G$ contains an element of order $p_1p_2 \dots p_k$. 
\end{definition}

Each element $S$ of $\Pi(G)$ is called a \textit{prime simplex}. In a slight abuse of terminology, we will often refer to a prime simplex not only as a set of primes, but also as the product of primes contained within it.

\begin{example}\label{examples}
\begin{enumerate}
    \item Let $G=S_n$. The prime simplices of $\Pi(G)$ consist of all subsets of distinct primes $\{p_1,p_2\dots, p_k\}$ with $p_1+p_2+ \dots +p_k\leq n$.
    \item If $G$ is nilpotent, then $\Pi(G)$ consists of all subsets of $\pi(G)$.
    \item For a finite group $G$ with $r=\vert \pi(G)\vert,$ $G^r$ consist of all subsets of $\pi(G).$
\end{enumerate}

\end{example}

We record some elementary properties of prime simplicial complexes.
\begin{lemma}
Let $G$ be a group, $H\leq G$ and $N \lhd G.$ \begin{enumerate}
    \item    If $S$ is a prime simplex in $H$ then it is also a prime simplex in $G.$ 
    \item If $S$ is a prime simplex in $G/N,$ then it is also a prime simplex in $G.$
\end{enumerate}
 
\end{lemma}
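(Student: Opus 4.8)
The plan is to prove both statements directly from the definition of a prime simplex via the existence of an element of appropriate order, using only elementary group theory.

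\medskip

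For part (1), I would let $S = \{p_1, p_2, \dots, p_k\}$ be a prime simplex in $H$. By definition this means $H$ contains an element $h$ of order $p_1 p_2 \cdots p_k$. Since $H \leq G$, the element $h$ lies in $G$ as well, and its order is unchanged by inclusion into the larger group. Hence $G$ contains an element of order $p_1 p_2 \cdots p_k$, so $S$ is a prime simplex in $G$. This direction is essentially immediate; the only thing to note carefully is that the order of an element is an intrinsic property, independent of the ambient group.

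\medskip

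For part (2), I would let $S = \{p_1, p_2, \dots, p_k\}$ be a prime simplex in $G/N$, so there is a coset $gN \in G/N$ whose order in the quotient is $m := p_1 p_2 \cdots p_k$. The goal is to produce an element of $G$ of order exactly $m$. The natural approach is to consider the element $g \in G$ and its order $n := |g|$. Since the quotient map $G \to G/N$ is a homomorphism, the order of $gN$ divides $n$, so $m \mid n$. Writing $n = m t$, the element $g^{t}$ has order $n/\gcd(n,t) = m/\gcd(m,t)$; more usefully, since $m$ is squarefree, I would instead extract an element of order exactly $m$ as follows. Because $m \mid n$, the cyclic group $\langle g \rangle$ of order $n$ contains a (unique) subgroup of order $m$, generated by $g^{n/m}$, and this element has order exactly $m$ in $\langle g \rangle \leq G$. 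Therefore $G$ contains an element of order $m = p_1 \cdots p_k$, establishing that $S$ is a prime simplex in $G$.

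\medskip

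The only genuinely nontrivial step is verifying in part (2) that $m$ divides $n = |g|$, which follows from the standard fact that the order of an image under a homomorphism divides the order of the preimage. Once that divisibility is in hand, passing to the power $g^{n/m}$ to obtain an element of the exact order $m$ is routine, using that in a cyclic group of order $n$ there is an element of order $d$ for every divisor $d$ of $n$. I do not anticipate any real obstacle; both parts reduce to tracking element orders through an inclusion and through a quotient homomorphism.
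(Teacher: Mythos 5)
Your proof is correct: part (1) is immediate since element order is intrinsic, and in part (2) the divisibility $m \mid |g|$ together with taking the power $g^{|g|/m}$ (of exact order $m$, as $\operatorname{ord}(g^{|g|/m}) = |g|/\gcd(|g|, |g|/m) = m$) produces the required element of $G$. The paper states this lemma without proof, treating it as elementary, and your argument is precisely the standard one the authors evidently have in mind, so there is nothing to compare beyond noting that your write-up fills in the omitted routine details correctly.
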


\begin{lemma} \label{directprod}
    Let $G_1$ and $G_2$ be finite groups. Then $S$ is a simplex in $G=G_1 \times G_2$ if and only if there exist simplices $S_i \in G_i$ with $S=S_1 \cup S_2$. In particular, if $G_1$ is unrecognisable by prime simplicial complex, then so is $G$.
\end{lemma}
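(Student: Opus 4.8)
The plan is to prove the set-theoretic description of $\Pi(G_1\times G_2)$ directly from the definition, using the behaviour of element orders under direct products, and then to deduce the unrecognisability statement by exhibiting infinitely many non-isomorphic groups with the same prime simplicial complex as $G$. The key observation for the equivalence is that a set $S=\{p_1,\dots,p_k\}$ is a simplex exactly when $G$ has an element whose order is the \emph{squarefree} integer $\prod_{p\in S}p$, combined with the fact that the order of $(g_1,g_2)\in G_1\times G_2$ is $\operatorname{lcm}(|g_1|,|g_2|)$. For the reverse implication, if $S_i\in\Pi(G_i)$ is witnessed by $g_i$ of order $\prod_{p\in S_i}p$, then $(g_1,g_2)$ has order $\operatorname{lcm}\left(\prod_{p\in S_1}p,\prod_{p\in S_2}p\right)=\prod_{p\in S_1\cup S_2}p$, since the least common multiple of two squarefree integers is the product of the primes dividing either; hence $S_1\cup S_2\in\Pi(G)$. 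For the forward implication, if $S\in\Pi(G)$ is witnessed by $g=(g_1,g_2)$ of squarefree order $m=\prod_{p\in S}p$, then each $|g_i|$ divides $m$ and is therefore squarefree, so setting $S_i=\pi(|g_i|)$ gives $S_i\in\Pi(G_i)$ with $g_i$ of order $\prod_{p\in S_i}p$, and $S_1\cup S_2=\pi(\operatorname{lcm}(|g_1|,|g_2|))=\pi(m)=S$.

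For the ``in particular'' clause, the idea is to transport unrecognisability through the first part. Since $G_1$ is unrecognisable, there are infinitely many pairwise non-isomorphic finite groups $H$ with $\Pi(H)=\Pi(G_1)$. For each such $H$, the characterisation gives $\Pi(H\times G_2)=\{S_1\cup S_2: S_1\in\Pi(H),\,S_2\in\Pi(G_2)\}$, which equals $\{S_1\cup S_2: S_1\in\Pi(G_1),\,S_2\in\Pi(G_2)\}=\Pi(G_1\times G_2)=\Pi(G)$, because $\Pi(H)$ and $\Pi(G_1)$ coincide as families of subsets of primes. Thus every group $H\times G_2$ shares the prime simplicial complex of $G$.

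The main obstacle, and the only step that is not purely formal, is to verify that these groups $H\times G_2$ are pairwise non-isomorphic, so that we genuinely obtain infinitely many distinct witnesses rather than a single isomorphism type produced by many different $H$. I would resolve this by invoking the Krull--Remak--Schmidt theorem: a finite group decomposes uniquely, up to isomorphism and reordering, into a direct product of indecomposable subgroups, which yields direct-product cancellation for finite groups. Consequently $H\times G_2\cong H'\times G_2$ forces $H\cong H'$, so distinct isomorphism types among the $H$ produce distinct isomorphism types among the products. This gives infinitely many non-isomorphic finite groups whose prime simplicial complex equals that of $G$, and hence $G$ is unrecognisable by prime simplicial complex.
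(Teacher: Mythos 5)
Your proof is correct and takes essentially the same route as the paper: the first part rests on the fact that the order of $(g_1,g_2)$ is $\operatorname{lcm}(|g_1|,|g_2|)$, and the second part on observing that $\Pi(H\times G_2)=\Pi(G)$ for every $H$ with $\Pi(H)=\Pi(G_1)$. The paper compresses the conclusion to ``the second assertion follows,'' so your Krull--Remak--Schmidt cancellation argument (ensuring the products $H\times G_2$ are pairwise non-isomorphic) is a careful justification of a step the paper leaves implicit; one could also sidestep cancellation by noting that infinitely many pairwise non-isomorphic finite groups $H$ must have unbounded orders, so the groups $H\times G_2$ have unbounded orders and hence realise infinitely many isomorphism types.
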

\begin{proof}
The first assertion follows directly from the fact that the order of an element $(g_1,g_2)$ in $G$ is the lowest common multiple of $|g_1|$ and $|g_2|$. Therefore, if $G_1$ and $H$ have $\Pi(G_1) =\Pi(H)$, then $\Pi(G) = \Pi(H \times G_2)$. The second assertion follows.
\end{proof}

We now highlight the connections in recognisability between the prime graph, prime simplicial complex and spectrum of a group.

\begin{lemma}
\label{graph_imps}
Let $G$ be a finite group.
    \begin{enumerate}
        \item If $G$ is recognizable by prime graph, then it is recognizable by prime simplicial complex, and by spectrum.
        \item If $G$ is unrecognizable by spectrum, then it is unrecognizable by prime simplicial complex, and by prime graph.
    \end{enumerate}
    \begin{proof}
The first result follows directly from the fact that the element orders encoded in the prime graph are contained in both the prime simplicial complex and spectrum for $G$. The second result is the contrapositive of the first.
\end{proof}
\end{lemma}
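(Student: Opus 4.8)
The plan is to organise the three invariants into a strict hierarchy of information content, in which the spectrum determines the prime simplicial complex, which in turn determines the prime graph; both parts of the lemma then drop out formally. Write $\omega(G)$ for the spectrum of $G$, that is, its set of element orders. The two facts I would establish first are: (a) $\Pi(G)$ is a function of $\omega(G)$, and (b) $\Gamma(G)$ is a function of $\Pi(G)$.

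For (a), I would observe directly from the definition that a set of distinct primes $S = \{p_1, \dots, p_k\}$ lies in $\Pi(G)$ if and only if the squarefree integer $p_1 \cdots p_k$ lies in $\omega(G)$. Thus $\Pi(G)$ is nothing more than the squarefree part of the spectrum re-encoded as sets of primes, and in particular depends only on $\omega(G)$. For (b), the vertex set of $\Gamma(G)$ is $\pi(G)$, which by Cauchy's theorem coincides with the set of singleton simplices $\{p\} \in \Pi(G)$; and $\{p, r\}$ is an edge of $\Gamma(G)$ exactly when $\{p, r\} \in \Pi(G)$. Hence $\Gamma(G)$ is precisely the $1$-skeleton of $\Pi(G)$, so it too is determined by $\Pi(G)$. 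Chaining (a) and (b), for any finite groups $G$ and $H$ we obtain the implications $\omega(G) = \omega(H) \Rightarrow \Pi(G) = \Pi(H) \Rightarrow \Gamma(G) = \Gamma(H)$.

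Given this chain, both parts follow by comparing the isomorphism classes of groups realising a fixed invariant value. For part (1), if $G$ is recognisable by prime graph and $H$ satisfies $\Pi(H) = \Pi(G)$, then $\Gamma(H) = \Gamma(G)$ forces $H \cong G$, so $G$ is recognisable by prime simplicial complex; the identical argument starting from $\omega(H) = \omega(G)$ yields recognisability by spectrum. For part (2), the set of groups with the same spectrum as $G$ is contained, via the chain, in the set of groups with the same prime simplicial complex, and in the set with the same prime graph; so if the former is infinite (up to isomorphism), so are the latter two, giving unrecognisability by prime simplicial complex and by prime graph.

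I do not expect a substantial obstacle: the statement is a soft consequence of the hierarchy, and indeed parts (1) and (2) are the two dual faces of the single containment of invariant-realisation sets. The only points requiring care are definitional --- checking that $\Pi$ is genuinely captured by the squarefree element orders and that $\Gamma$ is exactly the $1$-skeleton of $\Pi$ --- together with keeping the quantifier in ``unrecognisable'' (not $k$-recognisable for any finite $k$) explicit, so that the containment-of-infinite-sets step in part (2) is valid.
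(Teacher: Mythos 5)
Your proposal is correct and follows essentially the same route as the paper: both rest on the information hierarchy in which the spectrum determines $\Pi(G)$ (via squarefree element orders) and $\Pi(G)$ determines $\Gamma(G)$ (as its $1$-skeleton), with recognisability and unrecognisability then following formally from the containment of the realisation classes. If anything, your treatment of part (2) is slightly more careful than the paper's, which dismisses it as ``the contrapositive of the first'' --- strictly, unrecognisability (failure of $k$-recognisability for every finite $k$) is stronger than non-recognisability, and your containment-of-infinite-sets argument handles this quantifier correctly.
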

In light of \Cref{graph_imps}, we often analyse the prime simplicial complex of a group by beginning with its prime graph. We briefly recall some graph-theoretic terminology. A \textit{(co)-clique} in a graph is a subset of the vertices that are all pairwise (non)-adjacent. A vertex in a graph is \textit{universal} if it is adjacent to every other vertex in the graph.

We now proceed with some further preliminary results.

\begin{lemma}\label{centralizerproof}
    Let $G$ be finite group, $p$ be a prime divisor of $G$ and $S$ be a prime simplex of $G$ not containing $p.$ Then the following are equivalent. 
    \begin{enumerate}
        \item $\{p\} \cup S$ is a maximal simplex of $G$
        \item There exists an element $g_p$ or order $p$ such that $\{p\}\cup S$ is a maximal simplex of $C_G(g_p).$
    \end{enumerate}
\end{lemma}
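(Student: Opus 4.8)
The plan is to treat both implications through a single structural bridge: the $p$-part / $p'$-part decomposition of an element whose order is squarefree and divisible by $p$. Write $m=\prod_{q\in S}q$, so that ``$\{p\}\cup S$ is a simplex of a group $K$'' means exactly that $K$ has an element of order $pm$. If $x\in K$ has order $pm$, then $x=x_p x_{p'}$ with $x_p$ of order $p$, $x_{p'}$ of order $m$, and $[x_p,x_{p'}]=1$; in particular $x_{p'}\in C_K(x_p)$, so $\{p\}\cup S$ is a simplex of $C_K(x_p)$ (realised by $x$ itself). Conversely, if $g$ has order $p$ and $C_K(g)$ contains an element $h$ of order $m$, then $gh$ has order $pm$, since $g$ and $h$ commute and have coprime orders. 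Applied with $K=G$, this shows that $\{p\}\cup S$ is a simplex of $G$ if and only if there is an element $g_p$ of order $p$ with $\{p\}\cup S$ a simplex of $C_G(g_p)$; note $p\mid |C_G(g_p)|$ automatically, so the condition is well posed. This decomposition is the engine for everything below.

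For $(1)\Rightarrow(2)$ I would argue as follows. Assume $\{p\}\cup S$ is a maximal simplex of $G$ and let $x$ be a witnessing element of order $pm$. Taking $g_p:=x_p$, the bridge above shows $\{p\}\cup S$ is a simplex of $C_G(g_p)$. To see it is maximal there, suppose some prime $r\notin\{p\}\cup S$ gives a simplex $\{p\}\cup S\cup\{r\}$ of $C_G(g_p)$. Since $C_G(g_p)\leq G$, a simplex of the subgroup is a simplex of $G$ (by the monotonicity of simplices under passage to subgroups recorded earlier), so $\{p\}\cup S\cup\{r\}$ would be a simplex of $G$ strictly containing $\{p\}\cup S$, contradicting maximality. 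Hence $g_p$ witnesses $(2)$. This direction is clean and uses only the bridge and subgroup monotonicity.

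The reverse implication $(2)\Rightarrow(1)$ is where I expect the real work to lie. Given $g_p$ with $\{p\}\cup S$ a maximal simplex of $C_G(g_p)$, subgroup monotonicity immediately gives that $\{p\}\cup S$ is a simplex of $G$, so only its maximality in $G$ is in question. The natural attempt is: if $\{p\}\cup S\cup\{r\}$ were a simplex of $G$, take a witness $y$ of order $pmr$ and decompose $y=y_p y_{p'}$; then $\{p\}\cup S\cup\{r\}$ is a simplex of $C_G(y_p)$, and one would like to contradict the maximality of $\{p\}\cup S$ in $C_G(g_p)$. The obstacle is precisely that $y_p$ need not be conjugate to $g_p$: maximality was assumed only inside the single centralizer $C_G(g_p)$, whereas an obstruction to maximality in $G$ can be realised inside the centralizer of an order-$p$ element from a \emph{different} conjugacy class. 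I would therefore need either (i) to track the specific element of order $p$ produced by a realiser of each simplex, i.e.\ to read $(2)$ as asserting maximality in the centralizer of the $p$-part of a realiser of $\{p\}\cup S$, or (ii) to use that the relevant extension behaviour of $\{p\}\cup S$ is uniform across the conjugacy classes of elements of order $p$. Controlling this dependence on the conjugacy class of the $p$-part is, to my mind, the crux of the statement, and the point I would scrutinise most carefully before committing to the existential reading of $(2)$.
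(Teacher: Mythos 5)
Your bridge and your proof of $(1)\Rightarrow(2)$ are correct and essentially identical to the paper's: the paper takes a witness $x$ of order $pm$, sets $g_p=x^{m}$ where $m=\prod_{s\in S}s$, notes $x\in C_G(x^{m})$, and transfers maximality from $G$ down to the centralizer by subgroup monotonicity, exactly as you do.

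Your hesitation about $(2)\Rightarrow(1)$ is not overcaution: the obstruction you isolate is genuine, and in fact the existential statement is false, with the paper's own proof committing precisely the slip you describe. The paper argues that if $\{p\}\cup S\cup\{r\}$ is a simplex of $G$ then, ``repeating the same argument as earlier,'' it is a simplex of $C_G(g_p)$; but that argument, applied to a witness $y$ of order $pmr$, only places $\{p\}\cup S\cup\{r\}$ in $C_G(y^{mr})$, the centralizer of the $p$-part of $y$, and nothing forces $y^{mr}$ to be conjugate to $g_p$. (When all elements of order $p$ in $G$ are conjugate, the centralizers are conjugate subgroups with identical prime simplicial complexes and the argument closes; in general it does not.) A concrete counterexample: take $G=A_5\times C_6$, $p=2$, $S=\{3\}$. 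For the involution $g_p=\bigl((1\,2)(3\,4),1\bigr)$ we have $C_G(g_p)\cong (C_2\times C_2)\times C_6$, whose element orders are $1,2,3,6$, so $\{2,3\}$ is a maximal simplex of $C_G(g_p)$ and $(2)$ holds; yet $\bigl((1\,2\,3\,4\,5),c\bigr)$ with $c$ a generator of $C_6$ has order $30$, so $\{2,3,5\}\in\Pi(G)$ and $\{2,3\}$ is not maximal in $G$ --- here the obstructing involution $(1,c^3)$ is central, lying in a different class from $g_p$. The correct repair is the universal quantification you gesture at in your option (ii): $\{p\}\cup S$ is maximal in $G$ if and only if it is a simplex of $C_G(g_p)$ for some element $g_p$ of order $p$, and is maximal in $C_G(h)$ for \emph{every} element $h$ of order $p$ for which it is a simplex of $C_G(h)$. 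This universal form is also what the paper actually uses downstream, e.g.\ for ${}^2\mathrm{B}_2(2^{2m+1})$, where centralizers of \emph{all} involutions are $2$-groups and hence $\{2\}$ is a maximal simplex. Your option (i), read with an existential quantifier over realisers of $\{p\}\cup S$, fails for the same reason (the counterexample above has a realiser $\bigl((1\,2)(3\,4),c^2\bigr)$ whose $2$-part is $g_p$); it becomes correct only if one quantifies over all realisers, since the $p$-part of $y^{r}$ is a power of $y$ and so its centralizer contains $y$.
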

\begin{proof}
First assume that $\{p\} \cup S$ is a maximal simplex of $G.$ Then there is an element $x\in G$ of order $p\prod_{s\in S} s,$ so $x^{\prod_{s\in S} s}$ has order $p$ and $x^p$ has order $\prod_{s\in S} s.$ Then $x\in C_G(x^{\prod_{s\in S} s}),$ so $\{p\} \cup S$ is a simplex of $C_G(x^{\prod_{s\in S} s})$ and it is maximal as it is maximal in $G.$ \par 
Now assume that $\{p\}\cup S$ is a maximal simplex of $C_G(g_p)$ for some element $g_p$ of order $p.$ Then $\{p\}\cup S$ is a simplex in $G.$ We show maximality in $G$ by contradiction. Assume there exists a prime $r$ such that $r\not \in\{p\}\cup S$ and $\{r\}\cup\{p\}\cup S$ is a simplex. Then repeating the same argument as earlier,  $\{r\}\cup\{p\}\cup S$ is a simplex of $C_G(g_p),$ which is a contradiction.
\end{proof}
Recall that the \textit{solvable radical} $\mathrm{Sol}(G)$ of a finite group $G$ is its unique largest normal solvable subgroup.
\begin{proposition}[{\cite[Proposition 1]{vasil2005connection}}]
\label{vas_indep_set}
 Let $G$ be a finite group and let $\mathrm{Sol}(G)$ be the solvable radical of $G$. For every subset $\rho\subseteq\pi(G)$  with $|\rho|\geq 3$ that are pairwise non-adjacent in $\Gamma(G)$, the intersection $\rho \cap \pi(\mathrm{Sol}(G))$ contains at most one prime. 
\end{proposition}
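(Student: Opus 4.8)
The plan is to argue by contradiction and reduce the statement to a purely solvable phenomenon. Suppose that for a coclique $\rho$ of $\Gamma(G)$ with $|\rho|\geq 3$ there are two distinct primes $p,q\in\rho$ both dividing $|\mathrm{Sol}(G)|$; since $|\rho|\geq 3$ we may fix a third prime $r\in\rho\setminus\{p,q\}$. The goal is to manufacture a single \emph{solvable} subgroup of $G$ whose order is divisible by all three of $p,q,r$, and then invoke the fact that a solvable group cannot carry three pairwise non-adjacent primes in its prime graph.

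The auxiliary fact I would establish first is: \emph{if $H$ is a finite solvable group then $\Gamma(H)$ has no coclique of size $3$}. Given three pairwise non-adjacent primes dividing $|H|$, I would use P.\ Hall's theorem to pass to a Hall $\{p,q,r\}$-subgroup $K\leq H$; non-adjacency of primes is inherited by subgroups (an element of $K$ of order $pr$ would be such an element of $H$), so in $K$ the three primes remain pairwise non-adjacent, and since $\pi(K)=\{p,q,r\}$ this makes $\Gamma(K)$ three isolated vertices, i.e.\ a prime graph with three connected components. This contradicts the Gruenberg--Kegel theorem, which forces the prime graph of a solvable group to have at most two connected components (a solvable group with disconnected prime graph being Frobenius or $2$-Frobenius, each with exactly two components).

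With this in hand, the crux is the observation that adjoining a single $r$-element to the solvable radical does not leave the world of solvable groups. Concretely, take $x\in G$ with $|x|=r$ and set $R=\mathrm{Sol}(G)$ and $L=\langle R,x\rangle$. As $R\trianglelefteq G$ we have $R\trianglelefteq L$ with $L/R=\langle xR\rangle$ a cyclic $r$-group, so $L$ is an extension of a solvable group by a solvable group and is itself solvable; this is uniform and in particular covers the case $r\mid|R|$, where one may simply take $L=R$. By construction $p,q\mid|R|$ and $r\mid|L|$, so all three primes divide $|L|$, while they remain pairwise non-adjacent in $\Gamma(L)$ because $L\leq G$. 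Applying the auxiliary fact to $L$ yields the desired contradiction, proving $|\rho\cap\pi(\mathrm{Sol}(G))|\leq 1$.

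The substantive step is the solvable lemma, and the main obstacle is really just deploying the correct form of the Gruenberg--Kegel theorem for solvable groups (the bound of two on the number of prime graph components); everything else is the clean structural trick that $\langle\mathrm{Sol}(G),x\rangle/\mathrm{Sol}(G)$ is an $r$-group, together with the harmless bookkeeping that non-adjacency of primes passes to subgroups.
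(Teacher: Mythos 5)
Your proof is correct, and it is worth noting that the paper itself gives no argument for this statement: it imports it verbatim from Vasil'ev \cite{vasil2005connection}, so the comparison is really with that source, whose argument yours essentially reconstructs. Your two ingredients are both sound. The reduction step is fine: with $R=\mathrm{Sol}(G)\trianglelefteq G$ and $x$ of order $r$, the subgroup $L=\langle R,x\rangle$ has $R\trianglelefteq L$ with $L/R=\langle xR\rangle$ an $r$-group, so $L$ is solvable, contains all three primes $p,q,r$ in $\pi(L)$, and non-adjacency passes down from $\Gamma(G)$ to $\Gamma(L)$ (your separate mention of the case $r\mid |R|$ is harmless but redundant, as the same definition of $L$ covers it). Your auxiliary fact --- that a solvable group has no coclique of size $3$ in its prime graph --- is also correctly derived: Hall $\{p,q,r\}$-subgroups exist in solvable groups by P.~Hall's theorem, all three primes divide the Hall subgroup's order, non-adjacency is inherited, and a solvable group cannot have a prime graph with three connected components, since by the Gruenberg--Kegel theorem a solvable group with disconnected prime graph is Frobenius or $2$-Frobenius and in either case has exactly two components. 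One shortcut available within this paper: your auxiliary fact is precisely the triangle-free condition on the complement in \Cref{solv_prime_graphs} (\cite[Theorem~2]{MR3395066}), which the paper already quotes, so you could cite that result directly and dispense with the Hall-subgroup/Gruenberg--Kegel detour; your route has the compensating virtue of being self-contained modulo classical theorems.
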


We will require the following number-theoretic results in the sections that follow.

\begin{lemma}[{\cite[Lemma A.1(i)]{BG}}]
\label{mult_order_lemma}
    Let $p\neq r$ be a prime and let $i$ be the smallest integer such that $r\mid q^i-1.$ Then $r$ divides $q^m-1$ if any only if $i$ divides $m.$
\end{lemma}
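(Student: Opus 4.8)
The statement is the standard fact that $r\mid q^m-1$ holds precisely when the multiplicative order of $q$ modulo $r$ divides $m$. The plan is to reformulate the divisibility condition as a statement about element orders inside the unit group $(\mathbb{Z}/r\mathbb{Z})^\times$ and then invoke the elementary theory of orders in a finite group. Here $q$ is understood to be a power of the prime $p$, so that the hypothesis $p\neq r$ guarantees coprimality.

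First I would observe that since $r$ is a prime distinct from $p$ and $q$ is a power of $p$, we have $\gcd(q,r)=1$, so $q$ represents a unit modulo $r$. In particular its multiplicative order is well-defined and finite, and by definition it is exactly the smallest positive integer $i$ with $q^i\equiv 1\pmod r$, matching the quantity in the statement. The divisibility $r\mid q^m-1$ is then simply a restatement of the congruence $q^m\equiv 1\pmod r$, so the lemma reduces to the equivalence $q^m\equiv 1\pmod r \iff i\mid m$. For the easy direction, if $i\mid m$ then writing $m=ik$ gives $q^m=(q^i)^k\equiv 1^k\equiv 1\pmod r$. For the converse, I would apply the division algorithm to write $m=is+t$ with $0\le t<i$, so that $q^m\equiv (q^i)^s q^t\equiv q^t\pmod r$; then $q^m\equiv 1$ forces $q^t\equiv 1$, and the minimality of $i$ among positive integers with this property forces $t=0$, i.e. $i\mid m$.

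There is no substantive obstacle here: this is a textbook argument about the order of an element in the finite abelian group $(\mathbb{Z}/r\mathbb{Z})^\times$. The only point requiring any care is the initial reduction, namely noting that the coprimality $\gcd(q,r)=1$ (which is exactly where the hypothesis $p\neq r$ is used) is what makes the order $i$ well-defined in the first place; once that is in place, the division-algorithm argument together with the minimality of $i$ gives both directions immediately.
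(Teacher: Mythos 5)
Your proof is correct and is the standard textbook argument (division algorithm plus minimality of the order of $q$ in $(\mathbb{Z}/r\mathbb{Z})^\times$); note that the paper itself gives no proof of this lemma, citing it directly from \cite[Lemma A.1(i)]{BG}, and your argument is exactly the expected one. You also handled the statement's notational slip sensibly by reading $q$ as a power of $p$ (so that $p\neq r$ yields $\gcd(q,r)=1$), which matches the paper's usage $q=p^e$ elsewhere.
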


\begin{theorem}[Zsigmondy's Theorem]\label{zsigmondy}
 Let $1\leq b < a$ such that $(a,b)=1$ and $n\in \mathbb{N}.$ Then $a^n-b^n$ has a primitive prime divisor unless one of the following holds. \begin{enumerate}
     \item $a=2,$ $b=1,$ $n=1,6$,
     \item \label{zsig_square} $n=2,$ $a+b=2^m$ for some $m\geq 1.$
 \end{enumerate}   
\end{theorem}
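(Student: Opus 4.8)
The plan is to prove the theorem via the factorization of $a^n-b^n$ into homogenized cyclotomic polynomials, followed by a careful analysis of when a prime dividing the top factor can fail to be primitive. Recall that a \emph{primitive prime divisor} of $a^n-b^n$ is a prime $p$ dividing $a^n-b^n$ but not $a^k-b^k$ for any $1\le k<n$. Writing $\Phi_d(a,b)=b^{\varphi(d)}\Phi_d(a/b)$ for the homogenization of the $d$-th cyclotomic polynomial, we have $a^n-b^n=\prod_{d\mid n}\Phi_d(a,b)$. For a prime $p\nmid ab$, the relation $p\mid a^k-b^k$ is equivalent to $\operatorname{ord}_p(ab^{-1})\mid k$, so $p$ is a primitive prime divisor of $a^n-b^n$ precisely when $\operatorname{ord}_p(ab^{-1})=n$.

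First I would establish a dichotomy for primes $p$ dividing the top factor $\Phi_n(a,b)$. If $e:=\operatorname{ord}_p(ab^{-1})=n$ then $p\equiv 1\pmod n$ and $p$ is a genuine primitive prime divisor. Otherwise $e$ is a proper divisor of $n$, and $p\mid\Phi_n(a,b)$ forces $n/e$ to be a power of $p$; in particular $p$ is the largest prime factor $P$ of $n$, and a lifting-the-exponent argument gives $v_p(\Phi_n(a,b))=1$, apart from the single anomaly $p=2$, $n=2$. Hence for $n\ge 3$ the only non-primitive prime that can divide $\Phi_n(a,b)$ is $P$, to the first power, so $a^n-b^n$ lacks a primitive prime divisor if and only if $\Phi_n(a,b)\in\{1,P\}$. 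The small cases are dealt with separately: for $n=1$ one has $\Phi_1(a,b)=a-b$, and for $n=2$ one has $\Phi_2(a,b)=a+b$, where any odd prime factor is automatically primitive (else it would divide both $a+b$ and $a-b$, contradicting $\gcd(a,b)=1$), so a primitive prime divisor fails exactly when $a+b$ is a power of $2$ --- this is the second exceptional family.

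For $n\ge 3$ it then suffices to prove $\Phi_n(a,b)>P$ outside the exceptions, a size estimate that is enough since $P\le n$. Using the product expansion $\Phi_n(a,b)=\prod_{\gcd(k,n)=1}(a-\zeta_n^k b)$ with $\zeta_n$ a primitive $n$-th root of unity and pairing complex-conjugate factors, I would derive the crude bound $\Phi_n(a,b)\ge(a-b)^{\varphi(n)}$ together with the sharper form in which each conjugate pair contributes $(a-b)^2+2ab\bigl(1-\cos(2\pi k/n)\bigr)$. When $a-b\ge 2$ the crude bound already gives $\Phi_n(a,b)\ge 2^{\varphi(n)}$, which exceeds $P$ once $n\ge 3$ (the few smallest $n$ being verified directly).

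The hard part is the boundary regime $a-b=1$ (in particular $b=1$, $a=2$), where the crude bound collapses to $1$ and the genuine exceptions live. Here I would feed the angular terms of the sharpened estimate into an explicit lower bound for $\Phi_n(a,b)$, reducing the question to a finite computation over small $n$ and small $a$. Carrying out those checks should leave precisely $a^6-b^6$ with $(a,b)=(2,1)$, namely $2^6-1=63=3^2\cdot 7$ (whose prime divisors $3$ and $7$ already appear in $2^2-1$ and $2^3-1$), as the only surviving exception beyond the $n\le 2$ families. Keeping this finite casework tight enough to certify that the listed families exhaust the exceptions is the main obstacle.
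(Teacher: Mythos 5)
The paper never proves this statement: it quotes Zsigmondy's theorem as a classical result (with the small-case conventions discussed in the note that follows it), so there is no in-paper proof to compare yours against; I assess your argument on its own terms. Your skeleton is the standard Birkhoff--Vandiver cyclotomic proof, and its structural steps are all correct: the factorisation $a^n-b^n=\prod_{d\mid n}\Phi_d(a,b)$; the dichotomy that a prime $p\nmid ab$ dividing $\Phi_n(a,b)$ either has $\mathrm{ord}_p(ab^{-1})=n$ (and is then primitive) or has order a proper divisor $e$ with $n/e$ a power of $p$, which forces $p$ to be the largest prime $P$ of $n$ (every other prime of $n$ divides $e\mid p-1$) and, by lifting the exponent, gives $v_p(\Phi_n(a,b))=1$ outside the anomaly $(p,n)=(2,2)$; the resulting criterion that for $n\ge 3$ a primitive divisor fails exactly when $\Phi_n(a,b)\in\{1,P\}$; the correct treatment of $n=2$ via $\gcd(a+b,a-b)\mid 2$; and the bound $\Phi_n(a,b)\ge(a-b)^{\varphi(n)}$, which disposes of $a-b\ge 2$ since $2^{\varphi(n)}\ge 2^{P-1}>P$ for $P\ge 3$ and $2^{\varphi(n)}\ge 4>2$ when $P=2$, $n\ge 4$.

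The genuine gap is at the decisive step, and you name it yourself: in the regime $a-b=1$ you never derive the explicit lower bound on $\Phi_n(a,b)$ that would make your ``finite computation over small $n$ and small $a$'' actually finite and enumerable. Saying you would ``feed the angular terms into an explicit lower bound'' is a plan, not an argument; the entire difficulty of Zsigmondy is concentrated precisely here, so as written the proof is incomplete. What is needed is a concrete inequality such as: pairing conjugate factors gives $|a-\zeta_n^k b|^2=1+2ab\,(1-\cos(2\pi k/n))$, and for $b\ge 2$ (so $ab\ge 6$) enough primitive angles are bounded away from $0$ that $\Phi_n(a,b)>n\ge P$ for all $n\ge 3$, leaving only $(a,b)=(2,1)$ for a genuinely delicate analysis; your parenthetical ``$a-b=1$ (in particular $b=1$, $a=2$)'' silently conflates these, since $(3,2),(4,3),\dots$ also collapse the crude bound and must be eliminated by the same estimate before the casework can be restricted to $a=2$. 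Two smaller omissions: for $n=1$ the failure condition is $a-b=1$ for \emph{any} coprime consecutive pair, not only $(a,b)=(2,1)$ --- your sketch stops at $\Phi_1(a,b)=a-b$ without recording this (the theorem's clause (1) as stated is itself loose on that point, matching the usual convention $n\ge 2$) --- and you should note that primes dividing $ab$ cannot divide $\Phi_n(a,b)$ for $n\ge 2$, since $\Phi_n(a,b)\equiv a^{\varphi(n)}$ or $b^{\varphi(n)}$ modulo such a prime, so the order dichotomy really does cover all prime divisors of the top factor.
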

Note that if one considers part (\ref{zsig_square}) with $b=1$, then  $a-1$ has at least two distinct prime divisors provided that $a>3$. In this case, we will abuse terminology and define $2$ to be a primitive prime divisor of $a-1$, and a primitive prime divisor of $a^2-1$ to be an odd prime divisor of $a-1$.

\begin{lemma}[{\cite{MR2076124}}]
    \label{2-3prime}
Suppose $n$ and $n+1$ are consecutive prime powers. Then one of the following holds.
\begin{enumerate}
    \item $(n,n+1) = (2^k,2^k+1)$ for some $k\geq 1$ a power of two and $n+1$ is a \textit{Fermat} prime.
    \item $(n,n+1) = (2^l-1,2^l)$ for some prime $l \geq 3$, and $n$ is a \textit{Mersenne} prime.
\item $(n,n+1)=(8,9)$.
\end{enumerate}
\end{lemma}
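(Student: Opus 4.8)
The plan is to write the two consecutive integers as prime powers, say $n = p^a$ and $n+1 = q^b$ with $p \neq q$ primes and $a,b \geq 1$ (they are coprime, so the primes must differ). Since consecutive integers have opposite parity, exactly one of $p,q$ equals $2$, and this cleanly splits the argument into two symmetric cases according to whether $n$ or $n+1$ is the power of two. In each case I would further split on whether the \emph{odd} prime power has exponent $1$ or exponent at least $2$; the exponent-$1$ branches produce the Fermat and Mersenne primes of conclusions (1) and (2), while the branches with both exponents $\geq 2$ are exactly where a Catalan-type input is needed.

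First I would treat $n = 2^a$, so that $q^b = 2^a + 1$ with $q$ odd. If $b = 1$, then $q = 2^a+1$ is prime, and the elementary divisibility $y+1 \mid y^m+1$ for odd $m$ (applied with $y = 2^d$ where $a = dm$) forces $a$ to have no odd factor exceeding $1$, i.e. $a$ is a power of $2$ and $n+1$ is a Fermat prime. This is conclusion (1). In the symmetric case $n+1 = 2^b$ one has $p^a = 2^b-1$; when $a = 1$ the number $p = 2^b-1$ is prime, and since $2^c-1 \mid 2^b-1$ whenever $c \mid b$, the exponent $b$ must itself be prime, yielding the Mersenne prime of conclusion (2). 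Before passing to the remaining branches, I would record that these exponent-$1$ cases are separated out first, so that whenever Catalan's equation is invoked the hypothesis ``both exponents $\geq 2$'' genuinely holds (for instance, in $q^b = 2^a+1$ with $b \geq 2$ the value $a = 1$ is impossible, since it would force $q^b = 3$).

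The remaining possibility in each case has the odd prime power of exponent $\geq 2$, so both $n$ and $n+1$ become genuine perfect powers with exponents at least $2$. This is precisely where I would invoke Mihailescu's theorem, the resolution of Catalan's conjecture, which states that $8$ and $9$ are the only consecutive perfect powers with exponents $\geq 2$. In the branch $q^b = 2^a+1$ with $a,b \geq 2$ this pins down $(n,n+1) = (8,9)$, giving conclusion (3); in the symmetric branch $p^a + 1 = 2^b$ with $a,b \geq 2$ one checks that the pair $(8,9)$ does not fit, as the larger term $9$ is not a power of $2$, so this branch is empty and produces no new solutions.

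The main obstacle, and the only genuine content, is the appeal to Mihailescu's theorem on which the cited reference ultimately rests; everything else is elementary cyclotomic divisibility. The other point requiring care is the bookkeeping of boundary values: the pair $(3,4) = (2^2-1,2^2)$ sits at exponent $l = 2$ rather than $l \geq 3$, and so must be recorded as handled separately (absorbed into conclusion (2) by convention, or verified directly), while the base exponents $a=1$ and $b=1$ must be isolated before Catalan's equation is applied. With these small-case checks in place, the four branches exhaust all consecutive prime powers and deliver exactly the three stated conclusions.
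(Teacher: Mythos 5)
The paper supplies no proof of this lemma at all --- it is imported by citation from \cite{MR2076124} --- so there is no internal argument to compare against, and your proposal must be judged on its own merits. On those merits it is correct and complete. The skeleton (coprimality forces distinct base primes, parity forces exactly one base to be $2$; the exponent-$1$ branches of the odd prime power yield the Fermat case via $2^d+1 \mid 2^{dm}+1$ for odd $m$ and the Mersenne case via $2^c-1 \mid 2^b-1$ for $c \mid b$; the both-exponents-$\geq 2$ branches go to Mihailescu) exhausts all cases, and you correctly isolate the degenerate value $a=1$ (where $q^b=3$) before invoking Catalan's equation, which is exactly the point where a careless write-up would be incomplete. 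Two remarks. First, Mihailescu's theorem is heavier machinery than needed: in the branch $2^a = q^b-1$ with $b \geq 2$, odd $b$ gives the factorisation $q^b-1=(q-1)(q^{b-1}+\cdots+1)$ whose second factor is odd and exceeds $1$, while even $b$ makes $q^{b/2}-1$ and $q^{b/2}+1$ powers of two differing by $2$, forcing $q^{b/2}=3$ and hence $(8,9)$; symmetrically, $p^a=2^b-1$ with $a\geq 2$ dies for odd $a$ by the same odd-factor argument and for even $a$ by $p^a \equiv 1 \pmod 8$ against $2^b-1\equiv 7 \pmod 8$. So the whole lemma is elementary, though your appeal to Catalan is of course valid. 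Second, your flag on $(3,4)$ is not mere bookkeeping but a genuine catch: $3=3^1$ and $4=2^2$ are consecutive prime powers, and the lemma as printed, with $l\geq 3$ in case (2), covers neither this pair under (1) nor under (2) nor under (3). Your proof correctly surfaces it in the $a=1$, $b=2$ corner; as stated the lemma should either list $(3,4)$ as a further exception or relax $l\geq 3$ to $l\geq 2$ (under which $3$ is still a Mersenne prime), and any downstream uses of the lemma in the paper should be read with that pair in mind.
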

\begin{lemma}
[{\cite{crescenzo1975diophantine,khosravi2004alternating}}]
\label{dio_eqn}
With the exception of the solutions $(239)^2-2(13)^4=-1$ and $(3^5)-2(11)^2 = 1$, every solution of the equation
$p^a-2r^b=\pm 1$ with $p,r$ prime and $a,b>1$ has $a=b=2$.
\end{lemma}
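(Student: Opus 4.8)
The plan is to reduce the equation to a short list of classical Diophantine equations whose solution sets are completely known: Catalan's equation (Mihailescu's theorem), Ljunggren's quartic equation $X^2-2Y^4=-1$, the Nagell--Ljunggren equation $\tfrac{X^n-1}{X-1}=Y^2$, and the determination of perfect powers in the Pell recurrences (Cohn's theorem). First I would dispose of the even bases. If $p=2$ then $p^a-2r^b$ is even, which is impossible, so $p$ is odd. If $r=2$ the equation reads $p^a-2^{b+1}=\pm1$, a case of Catalan's equation with both exponents at least $2$; its only solution is $3^2-2^3=1$, giving $p=3,\ r=2,\ a=b=2$. So from here on $p$ and $r$ are both odd, and I treat the two signs separately.

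For the $+1$ equation $p^a-1=2r^b$, a congruence modulo $4$ forces $a$ to be odd: if $a=2m$ then $(p^m-1)(p^m+1)$ is divisible by $4$ while $2r^b\equiv 2\pmod 4$. Hence $a\ge 3$. Applying Zsigmondy's theorem (\Cref{zsigmondy}) to $p^a-1$ (its exceptional cases do not occur, as $p$ is odd and $a\ge 3$) produces a primitive prime divisor $\ell$; by \Cref{mult_order_lemma} the multiplicative order of $p$ modulo $\ell$ is $a$, so $a\mid \ell-1$ and $\ell$ is odd. Then $\ell\mid 2r^b$ forces $\ell=r$, and primitivity gives $r\nmid p-1$, so $\gcd(p-1,r^b)=1$; since $p-1\mid p^a-1=2r^b$ we conclude $p-1\mid 2$, i.e. $p=3$. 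The problem collapses to deciding when the base-$3$ repunit $\tfrac{3^a-1}{2}=1+3+\cdots+3^{a-1}$ is a proper prime power $r^b$ with $b\ge 2$; by the Nagell--Ljunggren theorem this occurs only for $a=5$, where $\tfrac{3^5-1}{2}=11^2$, yielding precisely the listed exception $3^5-2\cdot 11^2=1$.

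For the $-1$ equation $p^a+1=2r^b$, I would first show $a$ is even. If $a$ were odd (hence $\ge 3$), applying Zsigmondy to $p^{2a}-1$ gives a prime of order exactly $2a$ dividing $p^a+1$; as before this prime equals $r$ and cannot divide $p+1$, so $p+1\mid 2r^b$ yields $p+1\mid 2$ and the contradiction $p=1$. Thus $a=2m$, and setting $X=p^m$ the equation becomes $X^2+1=2r^b$, so that $X^2-2r^b=-1$. Reducing $X^2\equiv-1\pmod r$ forces $r\equiv 1\pmod 4$, and I would then analyse $b$ by its residue modulo $4$: if $b$ is odd, a descent in $\mathbb{Z}[i]$ factoring $(X+i)(X-i)=2r^b$ and comparing imaginary parts forces $b=1$, excluded; if $4\mid b$ the equation is Ljunggren's $X^2-2(r^{b/4})^4=-1$, whose only nontrivial solution $X=239,\ r^{b/4}=13$ together with the primality of $r$ forces $b=4,\ r=13,\ X=239=p^m$, hence $m=1$ and the second exception $239^2-2\cdot13^4=-1$; and if $b\equiv 2\pmod 4$ with $b>2$ then $X^2-2(r^{b/2})^2=-1$ would make $r^{b/2}$ a perfect power (of exponent $\ge 3$) equal to a Pell number, which Cohn's theorem forbids. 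This leaves only $b=2$, and the requirement that $X=p^m$ itself be a perfect power lying in the companion Pell sequence forces $m=1$ by the same perfect-power results, so $a=2$ and the surviving solutions are exactly the Pell family $p^2-2r^2=-1$.

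The elementary reductions above --- the parity and congruence arguments, the Zsigmondy and gcd arguments pinning $p=3$ in the first case and $a$ even in the second --- are routine. The real content, and the reason the statement is credited to the cited sources rather than proved inline, lies entirely in the three deep perfect-power inputs: Ljunggren's complete solution of $X^2-2Y^4=-1$, Cohn's determination of perfect powers in the Pell sequence, and the Nagell--Ljunggren resolution of $\tfrac{3^a-1}{2}=r^b$. Each is a genuinely hard classical theorem, and I expect their invocation to be the main obstacle: it is precisely their combination that isolates the two sporadic solutions $239^2-2\cdot13^4=-1$ and $3^5-2\cdot11^2=1$ while leaving only the Pell families with $a=b=2$.
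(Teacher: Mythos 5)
The paper offers no proof of this lemma: it is imported wholesale from Crescenzo \cite{crescenzo1975diophantine} and from Khosravi--Khosravi \cite{khosravi2004alternating} (whose contribution was precisely to restore the solution $3^5-2\cdot 11^2=1$ that Crescenzo overlooked), so there is no inline argument to compare against. Your sketch is, in architecture, a faithful reconstruction of how those sources proceed: parity and mod-$4$ reductions, primitive-prime-divisor arguments (\Cref{zsigmondy}) to pin $p=3$ in the $+1$ branch and to force $a$ even in the $-1$ branch, and then the genuinely deep inputs, Ljunggren's $X^2-2Y^4=-1$ and his repunit theorem. Your elementary steps all check out, and you correctly locate the exceptional solution $3^5-2\cdot 11^2=1$ inside Ljunggren's $q=2$ repunit theorem (solutions $(x,n,y)=(3,5,11)$ and $(7,4,20)$) --- exactly the point where Crescenzo's treatment erred.

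Two cautions, one of which is a genuine soft spot. First, the appeal to ``the Nagell--Ljunggren resolution of $\frac{3^a-1}{2}=r^b$'' is stated too broadly: the general Nagell--Ljunggren equation $\frac{x^n-1}{x-1}=y^q$ is a famous open problem, and only particular cases are theorems. What you need is Ljunggren's $q=2$ case (covering all even $b$ and producing the exception) together with the odd-exponent cases for base $3$ (Nagell's theorem for $q=3$; for $q\ge 5$ a specific citation covering $x=3$ must be supplied). As written, this blanket invocation is the one real gap; the same demand for precision attaches to ``Cohn's theorem'' in the $-1$ branch. Second, the $-1$ branch can be streamlined so the Pell perfect-power input disappears: if $b$ has any odd prime factor $q$, then $X^2+1=2\bigl(r^{b/q}\bigr)^q$, and the classical theorem on $x^2+1=2y^q$ for odd $q\ge 3$ (St{\o}rmer/Ljunggren; only $x=y=1$) already kills it --- this subsumes your $b\equiv 2\pmod 4$ case, leaving only $b$ a power of $2$, which Ljunggren's quartic handles. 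Likewise the final step $m=1$ needs no perfect powers in the companion Pell sequence: if $m>1$, write $2m=2^st$ with $t$ odd. If $t>1$ then $p^{2^s}+1>2$ divides $2r^2$, so $r\mid p^{2^{s+1}}-1$, while \Cref{zsigmondy} applied to $p^{4m}-1$ produces a primitive prime divisor that must equal $r$, contradicting $2^{s+1}<4m$. If $t=1$ then $s\ge 2$ and $\bigl(p^{2^{s-2}}\bigr)^4+1=2r^2$, the classical quartic $x^4+1=2y^2$ with only the trivial solution. With those citations pinned down and these simplifications, your sketch becomes a complete proof along essentially the same lines as the cited sources.
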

\begin{lemma}\label{q4lemma}
    Let $q\geq 4.$ Then $\frac{q^4-1}{(4,q-1)}$ has at least 3 distinct prime divisors. 
\end{lemma}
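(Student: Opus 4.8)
The plan is to factor $q^4-1=(q-1)(q+1)(q^2+1)$ and exhibit three distinct primes dividing $N:=\frac{q^4-1}{(4,q-1)}$ by locating primitive prime divisors via Zsigmondy's theorem (\Cref{zsigmondy}). The key preliminary observation is that $(4,q-1)\in\{1,2,4\}$ is always a power of $2$; consequently $N$ and $q^4-1$ have exactly the same \emph{odd} prime divisors, and dividing by $(4,q-1)$ can at worst delete the prime $2$. Applying \Cref{zsigmondy} with $a=q$, $b=1$, $n=4$ (for which no listed exception arises) produces a primitive prime divisor $r_4$ of $q^4-1$, that is, a prime dividing $q^4-1$ but not $q^i-1$ for $i\in\{1,2,3\}$; in particular $r_4\nmid q^2-1$. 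Since $2\mid q^2-1$ whenever $q$ is odd and $2\nmid q^4-1$ whenever $q$ is even, the prime $r_4$ is odd in every case, so it also divides $N$. I would then split on the parity of $q$.

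In the even case, $q-1$, $q+1$ and $q^2+1$ are all odd and exceed $1$ (as $q\geq 4$), and $(4,q-1)=1$, so $N=q^4-1$. Because $q-1$ and $q+1$ are coprime odd numbers, each at least $3$, they contribute two distinct odd primes $r_1\mid q-1$ and $r_2\mid q+1$; together with $r_4\mid q^2+1$, which differs from both since $r_4\nmid q^2-1=(q-1)(q+1)$, this yields three distinct primes dividing $N$.

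In the odd case I would first show the prime $2$ survives the division. Let $v_2$ denote the $2$-adic valuation. Exactly one of $q-1,q+1$ is divisible by $4$, so $v_2(q^2-1)\geq 3$, while $v_2(q^2+1)=1$; hence $v_2(q^4-1)\geq 4$, whereas $v_2((4,q-1))\leq 2$, giving $v_2(N)\geq 2$ and so $2\mid N$. For a second odd prime I would use that $q^2-1$ is never a power of $2$ for odd $q\geq 5$: indeed $(q-1)(q+1)=2^k$ forces both factors to be $2$-powers differing by $2$, i.e. $\{q-1,q+1\}=\{2,4\}$ and $q=3$. Thus $q^2-1$ has an odd prime divisor $r'$, and $r'\neq r_4$ since $r_4\nmid q^2-1$. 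Hence $2,\ r',\ r_4$ are three distinct prime divisors of $N$.

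The only genuinely delicate point—and the part I expect to require the most care—is controlling the prime $2$ alongside the division by $(4,q-1)$ in the odd case, and ensuring the Zsigmondy exceptions (the $n=2$ case, occurring exactly when $q+1$ is a $2$-power) do not cost us a third prime. The elementary fact that $q^2-1$ fails to be a power of $2$ for every $q\geq 5$ sidesteps these exceptions entirely, reducing the argument to the valuation bookkeeping above; since all the inequalities used hold for every $q\geq 4$, no separate small-case analysis is needed.
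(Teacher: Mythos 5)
Your proof is correct; every step checks out, including the key facts that Zsigmondy's theorem has no exception at $n=4$ with $b=1$, that the resulting primitive prime $r_4$ is automatically odd, and that $v_2(q^4-1)\geq 4$ for odd $q$ so the prime $2$ survives division by $(4,q-1)$. The route differs from the paper's in how the exceptional cases are managed. The paper also starts from the factorisation $q^4-1=(q-1)(q+1)(q^2+1)$, but invokes Zsigmondy three times --- extracting primitive prime divisors of $q-1$, $q^2-1$ and $q^4-1$ (using the paper's extended conventions for the degenerate cases $n=1,2$) --- and must then treat the exceptional branch $q+1=2^m$ separately, observing that $q-1$ still supplies an odd prime for $q>3$; the division by $(4,q-1)$ is dispatched with the single remark that $4\mid q-1$ forces $8\mid q^4-1$. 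You instead apply Zsigmondy exactly once, at $n=4$ where the exception list is empty, and replace the other two primitive divisors with elementary arguments: coprimality of the odd numbers $q-1$ and $q+1$ in the even case, and, in the odd case, the $2$-adic valuation count together with the observation that $q^2-1$ is a power of $2$ only for $q=3$. What your approach buys is a complete bypass of the Zsigmondy exception at $n=2$ (and of the paper's abuse-of-terminology conventions surrounding it), plus a more transparent accounting of why $2$ survives the division; the cost is a parity case split that the paper's argument avoids. Both arguments are of comparable length and either would serve.
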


\begin{proof}
    Note that $(q^4-1)$ factorizes as $(q^2-1)(q^2+1)=(q-1)(q+1)(q^2+1).$ By \Cref{zsigmondy}, $q-1,$ $q^2-1$ and $q^4-1$ all have a primitive prime divisor unless $q+1=2^m$ for some $m\in \mathbb{N}$ in which case $\pi (q^2-1)=\pi (q-1).$ However, if $q+1$ is a power of two, then $q-1$ has an odd prime divisor unless $q=3,$ as the only $2$ and $4$ are powers of two with difference two. 
    Note that if $q-1$ is divisible by $4$ then $q^4-1$ is divisible by $8,$ so we do not lose a prime divisor by division by $(4,q-1).$
\end{proof}

\begin{proposition}
\label{q2_divisors}
Let $q$ be a prime power such that $q^2-1$ has exactly two distinct prime divisors. Then $q \in \{4,5,7,8,9,17\}$. Moreover, if $q$ is odd, then $q^2-1$ has exactly two distinct prime divisors if and only if $(q^2-1)/4$ does.
\end{proposition}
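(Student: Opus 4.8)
The plan is to factor $q^2-1=(q-1)(q+1)$ and count prime divisors by controlling the two factors separately, splitting the analysis according to the parity of $q$. The \emph{moreover} clause is quickest: when $q$ is odd, $q-1$ and $q+1$ are consecutive even integers, so one of them is divisible by $4$ and hence $8\mid q^2-1$; dividing by $4$ therefore only lowers the exponent of $2$ (which remains positive) and leaves every odd prime divisor untouched, so $\pi\!\left((q^2-1)/4\right)=\pi(q^2-1)$ and the two quantities have the same number of distinct prime divisors. This also records that $2$ is always a prime divisor of $q^2-1$ when $q$ is odd.

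For $q=2^k$ even (with $k\geq 2$, since $q=2$ gives the single prime $3$), the factors $2^k-1$ and $2^k+1$ are coprime odd integers exceeding $1$, so $q^2-1$ has exactly two prime divisors precisely when each of them is a prime power. I would then apply \Cref{2-3prime} to the two pairs of consecutive prime powers $(2^k-1,2^k)$ and $(2^k,2^k+1)$. The first pair can only fall into the Mersenne case, forcing $k$ to be prime; the second pair falls into the Fermat case (forcing $k$ to be a power of two) or into the exceptional pair $(8,9)$ (forcing $k=3$). Intersecting ``$k$ prime'' with ``$k$ a power of two or $k=3$'' leaves $k\in\{2,3\}$, i.e. $q\in\{4,8\}$.

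For $q$ odd, since $2\mid q^2-1$ and the odd parts of $q-1$ and $q+1$ are coprime, having exactly two prime divisors means $q^2-1=2^a r^b$ for a single odd prime $r$, and that odd prime can divide only one of $q\pm1$; hence one of $q\pm1$ is a pure power of two and the other has odd part a power of $r$. This splits into two subcases, $q=2^m-1$ and $q=2^m+1$, according to which of $q\pm1$ is the power of two. In each subcase I extract two consecutive-prime-power conditions: one says that $q$ together with the adjacent $2^m$ forms such a pair, and the other says the same for the odd part $2^{m-1}-1$ or $2^{m-1}+1$ of the remaining factor together with $2^{m-1}$. Applying \Cref{2-3prime} to both pairs and intersecting the constraints on $m$ pins everything down: for $q=2^m-1$ one gets $m$ and $m-1$ both prime, hence $m=3$ and $q=7$; for $q=2^m+1$ one gets $m=3$ (giving $q=9$), $m$ and $m-1$ both powers of two (giving $m=2$, $q=5$), or $m=4$ (giving $q=17$). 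Collecting all cases yields $q\in\{4,5,7,8,9,17\}$, and each candidate is checked by a direct factorisation.

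The main obstacle is bookkeeping rather than depth: one must apply the consecutive-prime-power classification to exactly the right pairs and track the constraints on the exponent $m$ carefully. In particular, the exceptional case $(8,9)$ of \Cref{2-3prime} is precisely what permits the prime powers of exponent greater than one (the value $9=3^2$, which arises as $q+1$ for $q=8$, as $q$ itself for $q=9$, and as the odd part $2^{3}+1$ for $q=17$), so it must be admitted in those pairs while being ruled out in the pairs where it cannot occur. Finally the small values $q\in\{2,3\}$, which produce only one prime divisor, have to be excluded explicitly.
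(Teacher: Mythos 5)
Your proof is correct and follows essentially the same route as the paper: factor $q^2-1=(q-1)(q+1)$, split on the parity of $q$, reduce each subcase to consecutive-prime-power conditions resolved by \Cref{2-3prime} (with the same exponent bookkeeping, e.g.\ $m$ and $m-1$ both prime forcing $q=7$, and $m\in\{2,3,4\}$ giving $q\in\{5,9,17\}$), and handle the moreover clause via $8\mid q^2-1$. The only cosmetic difference is in the even case, where the paper first uses divisibility by $3$ to force one factor to be a power of three before invoking \Cref{2-3prime}, while you apply the lemma directly to both pairs $(2^k-1,2^k)$ and $(2^k,2^k+1)$; both are equally valid.
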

\begin{proof}
If $q$ is even, then $q-1$ and $q+1$ are coprime, hence both must be prime powers, and in particular, one must be a power of three. Hence, $q\in \{4,8\}$ by Lemma \ref{2-3prime} as required.
Suppose now that $q$ is odd. Then $q-1$ and $q+1$ are divisible by two, hence one of them must be a power of two. If $q-1=2^k$ for some $k\geq 2$, then $k$ must be equal to three or a power of two,  $q+1 = 2(2^{k-1}+1)$, so $(2^{k-1}+1)$ must be a prime power. Hence
$k-1$ is also equal to three or a power of two by Lemma \ref{2-3prime}, so $k\in \{2,3,4\}$ and $q \in \{5,9,17\}$ as required. If instead $q+1=2^k$ for some $k \geq 2$, then $q$ is a Mersenne prime and $k$ is prime. Thus $q-1 = 2(2^{k-1}-1)$, and $(2^{k-1}-1)$ must be a prime power. So $k--1$ must also be prime and so $k=3$ and $q=7$.

Finally, if $q$ is odd, then $q^2-1$ is a multiple of 8, hence $q^2-1$ is even if and only if $(q^2-1)/4$ is.
\end{proof}
\begin{proposition}[{\cite{MR50615}}]
\label{primes}
For every integer $n$, there is a prime strictly between $n$ and $2n$. Moreover, if $n\geq 25$, there is a prime strictly between $n$ and $6n/5$.
\end{proposition}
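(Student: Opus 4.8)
The plan is to treat the two assertions separately, since the first is the classical Bertrand postulate while the second is a genuine strengthening requiring sharper analytic input. For the first part, I would follow Erd\H{o}s's elementary argument based on the central binomial coefficient $\binom{2n}{n}$. The starting point is a two-sided estimate: on the one hand $\binom{2n}{n}$ is the largest of the $2n+1$ terms summing to $4^n$, so $\binom{2n}{n} \geq 4^n/(2n+1)$; on the other hand one controls its prime factorisation. Writing $R(p)$ for the exponent of a prime $p$ in $\binom{2n}{n}$, Legendre's formula gives $p^{R(p)} \leq 2n$, and a short computation shows that no prime $p$ with $2n/3 < p \leq n$ divides $\binom{2n}{n}$ (for $n \geq 5$), since such a $p$ occurs exactly twice in $(2n)!$ and once in $n!$.

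Next I would split the remaining primes into those with $p \leq \sqrt{2n}$, which number at most $\sqrt{2n}$ and each contribute a factor at most $2n$, and those with $\sqrt{2n} < p \leq 2n/3$, which contribute at most $p^1$ each. Using the primorial bound $\prod_{p \leq x} p \leq 4^x$, the product over the second range is at most $4^{2n/3}$. Assuming for contradiction that there is no prime in $(n,2n]$, these two ranges account for the entire factorisation, so I obtain
\[ \frac{4^n}{2n+1} \leq \binom{2n}{n} \leq (2n)^{\sqrt{2n}} \cdot 4^{2n/3}. \]
Taking logarithms, this inequality fails once $n$ exceeds an explicit threshold (around $n \geq 468$ in the standard treatment), yielding the contradiction for all large $n$. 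For the finitely many smaller values I would exhibit an explicit increasing chain of primes, for instance $2,3,5,7,13,23,43,83,163,317,631$, in which each term is less than twice its predecessor; any $n$ below the threshold lies between two consecutive members of this chain, which supplies the required prime.

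The second assertion — a prime strictly between $n$ and $6n/5$ for $n \geq 25$ — is the main obstacle, since the crude binomial and primorial estimates above are nowhere near tight enough to separate $n$ from $6n/5$. Here I would instead work with explicit bounds on the Chebyshev functions $\theta(x) = \sum_{p \leq x} \log p$ and $\psi(x) = \sum_{p^k \leq x} \log p$, following the analytic method of the cited work: one establishes numerically effective inequalities of the form $c_1 x < \theta(x) < c_2 x$ on suitable ranges and then shows that $\theta(6x/5) - \theta(x) > 0$ for every $x \geq 25$, which forces a prime in the interval $(x, 6x/5)$. The delicate part is obtaining the constants $c_1, c_2$ sharp enough that the difference $\theta(6x/5) - \theta(x)$ is provably positive all the way down to $x = 25$, supplemented by a finite direct verification for the smallest cases. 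This careful estimation, rather than any new structural idea, is exactly the content I would expect to draw from the cited result.
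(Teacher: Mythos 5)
The paper offers no proof of this proposition at all: it is quoted verbatim with a citation to Nagura's paper \cite{MR50615}, so there is no internal argument to compare against. Measured on its own terms, your proposal is sound where it is complete: the first half is the standard Erd\H{o}s proof of Bertrand's postulate via $\binom{2n}{n}$, and your estimates are all correct --- $\binom{2n}{n}\geq 4^n/(2n+1)$, the Legendre bound $p^{R(p)}\leq 2n$, the vanishing of primes in $(2n/3,n]$ for $n\geq 5$, the split at $\sqrt{2n}$ combined with $\prod_{p\leq x}p\leq 4^x$, and the Landau chain $2,3,5,7,13,23,43,83,163,317,631$ to cover the cases below the threshold. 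One pedantic point: the statement as printed (``for every integer $n$'') fails at $n=1$, where no prime lies strictly between $1$ and $2$; your chain argument likewise only covers $n\geq 2$, so if you want to match the letter of the statement you should note this boundary case (the intended reading in the paper is clearly $n\geq 2$, as it is only ever applied to large $n$ in Lemma \ref{sumprimes} and Theorem \ref{snpurity}). For the second half you give an outline, not a proof: you correctly identify that the binomial-coefficient method cannot reach the ratio $6/5$ and that one needs explicit numerical bounds on $\theta$ and $\psi$ with the positivity of $\theta(6x/5)-\theta(x)$ forced down to $x=25$ --- which is indeed Nagura's method --- but you defer the actual constants to the cited work. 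Since the paper itself treats the entire proposition as a black-box citation, this deferral is a reasonable stopping point rather than a gap, though it means your text proves strictly less than it would need to stand alone.
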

\begin{lemma} \label{sumprimes}
Let $n\geq 3$ be an integer. The sum of the primes at most $n$ is strictly greater than $n$.
\end{lemma}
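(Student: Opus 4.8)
The plan is to bound the partial sum $\sigma(n) := \sum_{p\le n,\ p\text{ prime}} p$ from below using Bertrand's postulate, which is available to us as the first assertion of \Cref{primes}. Rather than estimate $\sigma(n)$ directly for arbitrary $n$, I would first prove the cleaner auxiliary inequality
\[
\sigma(p) \ge 2p-1 \qquad \text{for every prime } p\ge 3,
\]
and only afterwards transfer it to all integers $n\ge 3$. The point of this intermediate bound is that it is stable under the natural induction, whereas $\sigma(n)>n$ itself is awkward to induct on because $\sigma$ is constant between consecutive primes.

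For the auxiliary bound I would induct along the sequence of primes. The base case $p=3$ is immediate, since $\sigma(3)=2+3=5=2\cdot3-1$. For the inductive step, let $p<p'$ be consecutive primes with $p\ge 3$. Applying \Cref{primes} with $n=p$ produces a prime strictly between $p$ and $2p$, so the \emph{next} prime satisfies $p'<2p$, and in particular $p'\le 2p$. Then
\[
\sigma(p') = \sigma(p)+p' \ \ge\ (2p-1)+p' \ \ge\ 2p'-1,
\]
where the final step is precisely the inequality $2p\ge p'$. This closes the induction.

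To finish, I would fix $n\ge 3$, let $p$ be the largest prime with $p\le n$ (so $p\ge 3$, as $2,3\le n$), and let $p'$ be the next prime. By maximality of $p$ there is no prime in $(p,n]$, so $\sigma(n)=\sigma(p)$ and moreover $p'>n$. Bertrand's postulate again gives $p'<2p$, hence $p'\le 2p-1$, and combining this with the auxiliary bound yields
\[
\sigma(n) = \sigma(p) \ \ge\ 2p-1 \ \ge\ p' \ >\ n,
\]
as required.

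The only genuine subtlety, and the step I would be most careful about, is securing the \emph{strict} inequality at the end. The naive route — estimating $n<2p$ from Bertrand and comparing with $\sigma(p)\ge 2p-1$ — only delivers $\sigma(n)\ge n$. The strictness instead comes for free once one observes that $p'$ is an \emph{actual prime} exceeding $n$ with $p'\le\sigma(p)$; this is exactly why I route the argument through the next prime $p'$ rather than through the bound $2p$. Everything else reduces to the short induction above, so Bertrand's postulate is really the only substantive ingredient.
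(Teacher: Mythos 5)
Your proof is correct, and it takes a genuinely different route from the paper's. The paper argues by strong induction on $n$ itself: writing $g(n)$ for the sum of primes at most $n$, it verifies small base cases directly, then uses \Cref{primes} to find a prime $p$ strictly between $\lfloor (k+1)/2 \rfloor$ and $k+1$ and bounds $g(k+1) \geq g(\lfloor (k+1)/2 \rfloor) + p > \lfloor (k+1)/2 \rfloor + \lfloor (k+1)/2 \rfloor + 1 \geq k+1$. You instead induct along the sequence of primes with the strictly stronger invariant $\sigma(p) \geq 2p-1$, and then transfer to arbitrary $n$ through the next prime $p'$. Both arguments rest on exactly one application of Bertrand's postulate per inductive step, but the trade-offs differ: your stronger invariant makes the strictness at the end transparent (via $\sigma(n) \geq 2p-1 \geq p' > n$, with $p'$ an actual prime exceeding $n$) and gives a mildly stronger quantitative conclusion for free, while also avoiding the floor-function bookkeeping of the halving induction. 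That bookkeeping is not entirely cosmetic: the paper's step as written invokes the hypothesis at $\lfloor (k+1)/2 \rfloor$, which for $k+1=5$ equals $2$, where $g(2) > 2$ fails, so its stated base range ($P(3)$, $P(4)$) should strictly be extended to include $P(5)$ (harmless, as $g(5)=10$ is immediate); your linear induction along primes has no analogous edge case. The paper's version is marginally more compact since it never isolates an auxiliary lemma, but yours is the more robust decomposition.
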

\begin{proof}

We proceed by induction. Let $P(n)$ be the statement that the sum of the primes at most $n$ is strictly greater than $n$, and let $g(n)$ denote the sum of primes at most $n$.  We verify the base cases $P(3)$, $P(4)$ directly. Now suppose $P(k)$ is true, and consider $P(k+1)$. We have $g(\lfloor \frac{k+1}{2} \rfloor) > \lfloor \frac{k+1}{2} \rfloor$. By \Cref{primes}, there is at least one prime $p$ between $\lfloor \frac{k+1}{2} \rfloor$ and $k+1$. Hence
\[
g(k+1) \geq g(\lfloor \frac{k+1}{2} \rfloor)+p > \lfloor \frac{k+1}{2} \rfloor+\lfloor \frac{k+1}{2} \rfloor+1 \geq k+1.
\]
This completes the proof.
\end{proof}

\section{Recognisability by prime simplicial complex}
\label{sec:recog}

In this section we consider the recognisability of finite groups by prime simplicial complex. Recall that the prime graph $\Gamma(G)$ of a finite group $G$ is a subset of its prime simplicial complex $\Pi(G)$. In particular, each group $H$ with $\Pi(H)=\Pi(G)$ must also have $\Gamma(H) = \Gamma(G)$. We therefore begin by recalling some results pertaining to groups with identical prime graphs.

We begin with an adapted version of \cite[Lemma 4.2]{lee2024recognisabilitysporadicgroupsisomorphism}. Recall that the \textit{Fitting subgroup} of a group $G$ is the unique largest normal nilpotent subgroup of $G$. It is generated by the largest normal nilpotent $p$-subgroups $O_p(G)$ for each prime dividing $|G|$. We will also often denote by $O_\pi(G)$ the largest nilpotent normal $\pi$-subgroup of $G$ with $\pi \subseteq \pi(G)$.
\begin{proposition}
\label{semi}
Let $G$ be a finite group with Fitting subgroup $F(G)$. Suppose $G$ contains an element of order $pr$, with $p,r$ primes. Then without loss, $p \mid |G/F(G)|$ and one of the following holds.
\begin{enumerate}
    \item $G/F(G)$ contains an element of order $pr$.
    \item  $G/F(G)$ admits an irreducible module in characteristic $r$ such that an element of order $p$ in $G/F(G)$ fixes a non-zero vector. 
\end{enumerate}
\end{proposition}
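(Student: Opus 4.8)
The plan is to fix a witnessing element, push it into the quotient $\bar G := G/F(G)$, and split into cases according to how much of its order survives. Let $x\in G$ have order $pr$ and write $x=yz$ where $y:=x^{r}$ has order $p$ and $z:=x^{p}$ has order $r$; being powers of $x$, they commute. Write $\bar x,\bar y,\bar z$ for their images in $\bar G$. Since $|\bar y|\mid p$ and $|\bar z|\mid r$, each image is either trivial or retains its prime order. I would first dispose of the case where both images are nontrivial: then $\bar y$ has order $p$, $\bar z$ has order $r$, they commute, and their orders are coprime, so $\bar x=\bar y\,\bar z$ has order $pr$, giving conclusion (1) and in particular $p\mid|\bar G|$. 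If exactly one image vanishes, I relabel the two primes so that it is $\bar z$ that vanishes; then $p\mid|\bar G|$ because $\bar y\neq 1$. (One may assume $x\notin F(G)$, since if both images were trivial then $\bar G$ would contain no element of order $p$ or $r$; this is the point at which the ambient hypotheses on $G$ are needed, and I would flag it explicitly.)

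So suppose $\bar z=1$, i.e. $z\in F(G)$. As $z$ has order $r$ and $F(G)$ is nilpotent, $z$ lies in $Q:=O_r(G)$, the unique Sylow $r$-subgroup of $F(G)$. Moreover $y$ normalises $Q\trianglelefteq G$ and centralises $z$, so $C_Q(y)\neq 1$. I would then fix a chief series of $G$ refining $1=Q_0<Q_1<\dots<Q_m=Q$, whose factors $V_i:=Q_{i+1}/Q_i$ are irreducible $\mathbb{F}_r[G]$-modules. Because the $p$-element $y$ acts coprimely on the $r$-group $Q$, fixed points are multiplicative along this invariant series: the surjection $C_{Q_{i+1}}(y)\to C_{Q_{i+1}/Q_i}(y)$ has kernel $C_{Q_i}(y)$, so if $y$ fixed no nonzero vector on any $V_i$, an easy induction would give $C_Q(y)=1$, contradicting $z\in C_Q(y)$. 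Hence $y$ fixes a nonzero vector of some $V_i$.

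It remains to see that $V_i$ is genuinely a module for $\bar G$. Here I would invoke the standard fact that a nilpotent normal subgroup centralises every chief factor of $G$ lying below it: since $V_i$ is a minimal normal subgroup of $G/Q_i$ contained in the nilpotent normal subgroup $F(G)/Q_i$, its intersection with the centre of $F(G)/Q_i$ is a nontrivial normal subgroup of $G/Q_i$ contained in $V_i$, so by minimality $V_i\leq Z(F(G)/Q_i)$ and $F(G)$ acts trivially on $V_i$. Thus $V_i$ is an irreducible $\mathbb{F}_r[\bar G]$-module on which the order-$p$ element $\bar y$ fixes a nonzero vector, which is exactly conclusion (2).

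The case division and the order computations are routine bookkeeping; the only real content is the coprime fixed-point argument producing a chief factor with nonzero $y$-fixed points, together with the descent step showing that this factor is a module for $G/F(G)$ rather than merely for $G$. I therefore expect the descent (the ``nilpotent normal subgroup centralises chief factors below it'' lemma, applied factor-by-factor) to be the main obstacle, and I would also take care to state the standing hypothesis that rules out the degenerate case $x\in F(G)$, where the conclusion as written need not hold.
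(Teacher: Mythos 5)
Your argument is correct, and every step checks out: the decomposition $x=yz$ into commuting parts of coprime prime orders; the reduction (after relabelling $p$ and $r$) to the case $\bar z=1$ with $z\in Q:=O_r(G)$; the coprime-action multiplicativity $|C_Q(y)|=\prod_i |C_{V_i}(y)|$ along a $G$-chief series of $Q$, which is valid because $\langle y\rangle$ acts coprimely on the $r$-group $Q$, so fixed points lift through $y$-invariant quotients; and the descent step, where your centre argument correctly shows that the nilpotent normal subgroup $F(G)/Q_i$ centralises the minimal normal subgroup $V_i$ of $G/Q_i$, so each $V_i$ is genuinely an irreducible $\mathbb{F}_r[G/F(G)]$-module on which $\bar y$ (of order $p$, since $\bar y\neq 1$) has a nonzero fixed vector. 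One point of comparison to flag: the paper gives no proof of Proposition~\ref{semi} at all — it is stated as an adapted version of \cite[Lemma 4.2]{lee2024recognisabilitysporadicgroupsisomorphism} — so your write-up in effect supplies the argument the paper delegates to the literature, and it is the standard one.

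Your caveat about the degenerate case is also well taken and worth keeping explicit: if $x\in F(G)$ (equivalently, both $\bar y$ and $\bar z$ vanish — for instance $G$ cyclic of order $pr$, where $F(G)=G$), then $p\nmid |G/F(G)|$ and neither conclusion holds, so the statement as printed is false without an implicit hypothesis. The paper's ``without loss'' only licenses interchanging $p$ and $r$; it cannot repair this case. This is harmless in the paper's applications, where $G/F(G)$ is a prescribed nontrivial (almost) simple group whose order is divisible by the relevant primes, but your decision to state the standing assumption $x\notin F(G)$ explicitly is the right call.
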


\begin{proposition}[{\cite[Theorem 7.1]{MR2213302}}]
Suppose that $G$ is a non-abelian finite simple group such that $2$ is a universal vertex in $\Gamma(G)$. Then $G$ is isomorphic to $A_n$, with $n\geq 5$ and the largest prime $p$ at most $n$ satisfying $p\leq n-3$.
\end{proposition}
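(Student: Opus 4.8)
The plan is to prove this through the classification of finite simple groups, after first rewriting the hypothesis as a statement about element orders. Saying that $2$ is a universal vertex of $\Gamma(G)$ means precisely that for \emph{every} odd prime $r \mid |G|$ the group $G$ contains an element of order $2r$; since every non-abelian simple group has even order, $2$ is always a vertex, so this adjacency condition is the entire content. I would then split into the three families and show that the condition forces $G$ to be an alternating group, while simultaneously pinning down the arithmetic restriction on $n$.

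For $G=A_n$ the analysis is purely combinatorial. Given an odd prime $r\le n$, an element of order $2r$ must have cycle type with least common multiple $2r$, so its cycle lengths lie in $\{1,2,r,2r\}$, and it lies in $A_n$ exactly when the number of even-length cycles is even. Minimising the number of moved points subject to these constraints, the cheapest such element is an $r$-cycle together with two transpositions, e.g. $(1\,2\,\cdots\,r)(r{+}1\;r{+}2)(r{+}3\;r{+}4)$, using $r+4$ points; a parity count shows no configuration on fewer points works, since a single $r$-cycle plus one transposition is an odd permutation and there is no even involution on three points. Hence $A_n$ has an element of order $2r$ if and only if $n\ge r+4$. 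Applying this to the largest prime $p\le n$, which gives the binding constraint, shows that $2$ is universal in $\Gamma(A_n)$ exactly when $p\le n-4$, and in particular $p\le n-3$ as claimed; the hypothesis $n\ge 5$ is just simplicity of $A_n$.

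It then remains to rule out the sporadic and Lie type groups. For the sporadic groups this is a finite verification from their known spectra (ATLAS data): in each case some odd prime $r\mid|G|$ is non-adjacent to $2$, equivalently $G$ has no element of order $2r$. The substantial part, and the main obstacle, is the groups of Lie type, where I would invoke the adjacency criterion for prime graphs of simple groups of Lie type that underlies the cited result. Writing $q=p^f$ for the defining characteristic, the plan is to exhibit in every family a single odd prime non-adjacent to $2$: a natural candidate is a primitive prime divisor $r=r_k(q)$ of $q^k-1$, which exists by \Cref{zsigmondy}, for $k$ chosen near the top of the range of cyclotomic indices dividing $|G|$. Such an $r$ lies in a cyclic maximal torus $T$, and the criterion expresses the adjacency $2\sim r$ in terms of whether some maximal torus of $G$ has order divisible by $2r$, subject to a regularity/normaliser condition on $N_G(T)$; for $k$ large relative to the Lie rank the torus carrying the order-$r$ element is too small to also contribute the factor $2$ in the required way, so $2\not\sim r$.

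The delicate points, and where the real work lies, are verifying uniformly that the chosen $k$ produces a genuinely non-adjacent prime across all Lie types (linear, unitary, symplectic, orthogonal, exceptional, and the twisted and Suzuki--Ree families, the last of which in fact have disconnected prime graphs with $2$ isolated, since the centraliser of an involution is a $2$-group), and disposing of the finitely many small-rank and small-field groups in which no suitable Zsigmondy prime exists by direct computation. I expect the groups of Lie type to be the hard part, because there is no single prime that works simultaneously for all of them: for instance the top Singer-cycle prime $r_n(q)$ in $\mathrm{PSL}_n(q)$ need not be non-adjacent to $2$ when $n$ is even and $q$ is odd, so the non-adjacent prime must instead be selected case by case from the structure of the maximal tori and their normalisers.
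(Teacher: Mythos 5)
The paper does not prove this statement at all: it is quoted verbatim from Vasil'ev--Vdovin \cite[Theorem 7.1]{MR2213302}, so your attempt is effectively a blind re-derivation of that external theorem, along the same torus-theoretic lines that underlie it. Your alternating-group analysis is correct and in fact sharper than the stated bound: the minimal support of an element of order $2r$ ($r$ an odd prime) in $A_n$ is $r+4$, realised by an $r$-cycle together with two transpositions, since an $r$-cycle with a single transposition is odd and a $2r$-cycle is itself an odd permutation, forcing two even-length cycles and a support of at least $\min(r+4,\,2r+2)=r+4$. This gives the criterion $n\geq r+4$, hence $p\leq n-4$ for the largest prime $p\leq n$, which implies the stated $p\leq n-3$. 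The sporadic case as a finite check against known spectra is also unobjectionable.

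The genuine gap is exactly where you acknowledge it: the Lie-type case, which is the entire substance of the cited theorem, is planned but not proved. The assertion that "for $k$ large relative to the Lie rank the torus carrying the order-$r$ element is too small to also contribute the factor $2$ in the required way" is not an argument: non-adjacency of $2$ and $r$ is not equivalent to the Zsigmondy prime $r$ lying in a torus of odd order, since $2r$ could divide the order of some \emph{other} torus or arise from mixed (non-semisimple) elements; ruling this out requires the full adjacency criteria of \cite{MR2213302} (or explicit involution-centraliser structure), applied case by case across all types, with the choice of $k$ depending on the type, the parity of the rank, and $q \bmod 4$, together with a separate treatment of the finitely many small groups where no Zsigmondy prime exists. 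Until that case analysis is executed, nothing has been shown for groups of Lie type. There is also a factual slip in your sketch: among the twisted families, only the Suzuki groups ${}^2\mathrm{B}_2(q)$ have involution centralisers that are $2$-groups (so $2$ is isolated); in ${}^2\mathrm{G}_2(q)$ the centraliser of an involution is $C_2\times \mathrm{PSL}_2(q)$, so $2$ is adjacent to several odd primes there, and non-universality must instead be witnessed by the primes dividing $q^2-q+1$, whose element centralisers are tori of odd order. This does not break your strategy, but it illustrates that the uniform claim you lean on is false as stated and the verification genuinely must be done family by family.
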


\begin{proposition}[{\cite[Theorem~1]{VasilevGorshkov}}] \label{GKthmVasilev}
Suppose $G$ is a finite insoluble group such that $\Gamma(G)$ contains a vertex non-adjacent to 2. Then there exists a non-abelian simple group $S$ and soluble subgroup $K\unlhd G$ such that $S \leq G/K \leq \mathrm{Aut}(S)$.
\end{proposition}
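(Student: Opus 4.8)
The plan is to prove that $\bar G := G/\mathrm{Sol}(G)$ is almost simple; taking $K = \mathrm{Sol}(G)$ (which is soluble and normal by definition) then gives exactly the stated conclusion. First I would record the standard structural reduction: since $\mathrm{Sol}(\bar G) = 1$, the socle $N := \mathrm{Soc}(\bar G)$ is a direct product $S_1 \times \cdots \times S_k$ of non-abelian simple groups, and $C_{\bar G}(N) = 1$. Indeed, any nontrivial normal subgroup centralising $N$ would contain a minimal normal subgroup of $\bar G$, which (the soluble radical being trivial) is a product of non-abelian simple groups lying in $N$, contradicting $C_{\bar G}(N)\cap N = Z(N) = 1$. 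Hence $\bar G$ embeds into $\mathrm{Aut}(N)$ with $N \cong \mathrm{Inn}(N) \le \bar G$. If $k = 1$, then $N = S$ is simple and $S \cong \mathrm{Inn}(S) \le \bar G \le \mathrm{Aut}(S)$, which is precisely the assertion with $K = \mathrm{Sol}(G)$. So the whole problem reduces to proving $k = 1$.

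I would argue the contrapositive: assuming $k \ge 2$, I would show that $2$ is a universal vertex of $\Gamma(G)$, contradicting the hypothesis that some vertex is non-adjacent to $2$. The enabling elementary observation is that $\{2,r\}$ is an edge of $\Gamma(G)$ exactly when $G$ contains an element whose order is divisible by $2r$, and that such elements are stable under passage between $G$ and its subquotients: an element of order divisible by $2r$ in any $H/K$ with $K \unlhd H \le G$ lifts to an element of $H \le G$ of order divisible by $2r$. Thus it suffices to exhibit elements of order divisible by $2r$ in whatever subquotient is most convenient for a given prime $r$.

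For a prime $r$ dividing $|N|$ this is immediate once $k \ge 2$: if $r \mid |S_1|$, pick $x \in S_1$ of order $r$ and an involution $t \in S_2$ (which exists since every non-abelian simple group has even order by Feit--Thompson). As $S_1$ and $S_2$ commute inside $N$, the product $xt$ has order $2r$, so $r$ is adjacent to $2$. This disposes of every prime dividing the socle, and is the routine part of the argument.

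The main obstacle is the remaining primes, namely those dividing $|\mathrm{Sol}(G)|$, or occurring only in the outer part $\bar G/N$, but not dividing $|N|$. For such an $r$ one must produce an involution of $G$ centralising an element of order $r$; the genuine difficulty is that a single involution may act on the $r$-part of $\mathrm{Sol}(G)$ by inversion, and so commute with no $r$-element at all. Here I would exploit that $k \ge 2$ supplies an abundance of commuting involutions — one from each factor $S_i$, together with their products — and argue via coprime action that they cannot simultaneously invert a fixed $r$-element. Concretely, working in the preimage of $N$ and choosing a Klein four-group $V$ acting coprimely on the relevant (odd) $r$-section $W$, the classical identity $W = \langle C_W(v) : 1 \ne v \in V\rangle$ forces some involution of $V$ to have a nontrivial fixed $r$-element, yielding an element of order $2r$. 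The outer primes are handled similarly, after analysing how an order-$r$ outer automorphism permutes and acts on the factors $S_i$. This coprime-action analysis on $\mathrm{Sol}(G)$, together with the classification-based control needed to locate suitable involutions inside the simple factors, is where the real work lies; I expect \Cref{vas_indep_set} to provide exactly the control on primes inside $\mathrm{Sol}(G)$ that makes this final step tractable.
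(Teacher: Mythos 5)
This statement is imported verbatim as \cite[Theorem~1]{VasilevGorshkov}; the paper offers no proof of its own, so your attempt has to be judged as a reproof of the Vasil'ev--Gorshkov theorem. Your skeleton is the right one (pass to $\bar G=G/\mathrm{Sol}(G)$, note $\mathrm{Soc}(\bar G)=S_1\times\cdots\times S_k$ with $C_{\bar G}(\mathrm{Soc}(\bar G))=1$, and derive a contradiction from $k\geq 2$ by showing $2$ would be universal in $\Gamma(G)$), and your treatment of primes dividing $|\mathrm{Soc}(\bar G)|$ is correct and complete. But the two cases you yourself flag as ``the real work'' contain genuine gaps, and the tool you propose to close them cannot do so. First, the Klein four-group step: involutions of $\bar G$ need not lift to involutions of $G$, and a four-subgroup of $N=S_1\times\cdots\times S_k$ need not lift to a four-subgroup of its preimage in $G$ --- the Sylow $2$-subgroup of that preimage can a priori be generalized quaternion, which surjects onto $C_2\times C_2$ yet contains a unique involution, and a single involution may act on the $r$-section by inversion. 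This obstruction is not hypothetical: it is exactly what happens in the $k=1$ configurations the theorem permits, e.g.\ $W\rtimes \mathrm{SL}_2(5)$ with $W$ an $r$-group on which the central involution acts as $-1$, where $2$ and $r$ really are nonadjacent. So ``choosing a Klein four-group $V$ acting coprimely on $W$'' is the point at issue, not an available move. The gap is repairable when $k\geq 2$: working in $G/O_{r'}(\mathrm{Sol}(G))$ so that $W=O_r(\cdot)$ is an honest normal subgroup (fixed points on a mere section do not lift without coprimality on the full preimage), a Sylow $2$-subgroup $T$ of the preimage of $N$ surjects onto $P_1\times\cdots\times P_k$ with each $P_i$ noncyclic, so $T/\Phi(T)$ has rank at least $4$; hence $T$ is neither cyclic nor generalized quaternion, so it contains two involutions, one central in $T$, yielding a four-group inside the group that actually acts on $W$. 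That argument (resting on the classification of $2$-groups with a unique involution) is absent from your sketch, and without it your coprime-action identity has nothing to act through.

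Second, the outer primes: when $r$ divides neither $|N|$ nor $|\mathrm{Sol}(G)|$, the case of an $r$-element permuting the factors nontrivially is easy (a diagonal centraliser supplies an involution), but the case of an $r$-element fixing every $S_i$ requires that some $C_{S_i}(x)$ have even order for a coprime automorphism $x$ of odd prime order --- a classification-level fact about field-type automorphisms of simple groups, which you acknowledge but do not supply; in the published proof this is exactly where the CFSG-based lemmas carry the weight. Finally, \Cref{vas_indep_set} cannot play the role you assign it: it requires a coclique of size at least three, whereas your hypothesis furnishes only the single nonadjacent pair $\{2,r\}$, and in any case it constrains $\pi(\mathrm{Sol}(G))$ rather than producing commuting involutions. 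As written, then, your proposal is a correct outline with its two hard cases left open.
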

\begin{proposition}[{\cite[Theorem 2]{MR3395066}}]
\label{solv_prime_graphs}
An unlabelled graph $\Gamma$ is isomorphic to the prime graph of a solvable group if and only if the complement $\bar{\Gamma}$ is 3-colourable and triangle-free.
\end{proposition}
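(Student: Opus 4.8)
Since the statement is an equivalence, the plan is to treat the two directions separately: necessity (the prime graph of a solvable group has a triangle-free, $3$-colourable complement) and sufficiency (every such graph is realised by some solvable group). The realisation direction is the substantial one. For necessity, suppose $G$ is solvable and $\Gamma = \Gamma(G)$. A triangle in $\bar{\Gamma}$ is precisely a coclique $\{p,q,r\}$ of $\Gamma$, and since $\mathrm{Sol}(G)=G$ we have $\pi(\mathrm{Sol}(G))=\pi(G)$; applying \Cref{vas_indep_set} with $\rho=\{p,q,r\}$ would force $|\rho \cap \pi(G)| \leq 1$, which is absurd. Hence $\bar{\Gamma}$ is triangle-free, equivalently $\alpha(\Gamma)\leq 2$. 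For $3$-colourability, note $\chi(\bar{\Gamma})$ equals the clique-cover number $\theta(\Gamma)$, so I must cover $\Gamma$ by three cliques. If $\Gamma$ is disconnected, the Gruenberg--Kegel theorem forces the solvable group $G$ to be Frobenius or $2$-Frobenius, whence $\Gamma$ has exactly two connected components; each component is in fact a clique, since a non-edge inside one component together with any vertex of the other would yield a $3$-coclique, contradicting triangle-freeness. So $\theta(\Gamma)\leq 2$ in this case.

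The connected case is more delicate, and I would argue by induction on $|G|$ via a minimal normal subgroup $N$, which is elementary abelian of some characteristic $p$. Passing to $G/N$ only removes edges, so any $3$-clique-cover of $\Gamma(G/N)$ is still a cover of $\Gamma(G)$ on the common vertices; the only difficulty is the prime $p$ when $p\nmid|G/N|$, in which case $N$ is a normal Sylow $p$-subgroup. Here \Cref{semi} (equivalently, coprime-action theory) identifies the neighbours of $p$ as those primes $q$ for which some $q$-element of a complement fixes a nonzero vector of $N$. Since $\alpha(\Gamma)\leq 2$, the set of non-neighbours of $p$ is a clique, and the task reduces to covering the subgraph induced on the neighbours of $p$ by at most two cliques. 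This is the \emph{crux} of necessity: it does not follow from triangle-freeness alone, as the complement of the Gr\"otzsch graph has independence number $2$ yet requires four cliques to cover. Controlling this using the solvable structure (so that the inductive hypothesis can be suitably strengthened) is where the real work in this direction lies.

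For sufficiency I am given an abstract graph $\Gamma$ with $\bar{\Gamma}$ triangle-free and $3$-colourable, and must construct a solvable group realising it. I would fix a proper $3$-colouring of $\bar{\Gamma}$, whose colour classes $C_1,C_2,C_3$ are cliques of $\Gamma$ partitioning the vertices, and assign an actual prime to each vertex. The group is then assembled from Frobenius and $2$-Frobenius building blocks: within each clique $C_i$ the assigned primes are forced to be mutually adjacent by a nilpotent (for instance, cyclic) piece in which the corresponding elements commute, while across classes each prescribed non-adjacency between primes $p$ and $r$ is imposed by a Frobenius action in which a group of order divisible by $p$ acts fixed-point-freely on an $r$-group, so that no element of order $pr$ arises. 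I expect this construction to be the main obstacle. The primes and modules must be chosen coherently so that all the required fixed-point-free actions exist simultaneously while no unintended element of composite order is created. Realising a single fixed-point-free action needs a prime $r$ with $p \mid r^k-1$ for a suitable $k$, so Dirichlet's theorem on primes in arithmetic progressions and \Cref{zsigmondy} enter at this stage, and the genuinely hard part is the recursive assembly of all these pieces into one solvable group whose prime graph is exactly $\Gamma$, with no extra edges and none missing.
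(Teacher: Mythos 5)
The first thing to note is that the paper does not prove this proposition at all: it is quoted verbatim, with proof, from \cite[Theorem 2]{MR3395066} (Gruber--Keller--Lewis--Naughton--Strasser), so there is no in-paper argument to match and a citation is the intended ``proof''. Judged as a standalone proof, your attempt establishes only the easiest quarter of the statement and leaves genuine gaps at exactly the two points you yourself flag. What works: deriving $\alpha(\Gamma)\leq 2$ (triangle-freeness of $\bar{\Gamma}$) from \Cref{vas_indep_set} with $\mathrm{Sol}(G)=G$ is correct and complete --- this is Lucido's three-primes lemma --- and your disconnected-case observation (Gruenberg--Kegel gives two components, each forced to be a clique by $\alpha(\Gamma)\leq 2$) is sound. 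What does not work: your induction for $3$-colourability of $\bar{\Gamma}$ stalls precisely where you admit it does. As your Gr\"otzsch-complement example correctly shows, $\alpha(\Gamma)\leq 2$ gives no bound on the clique cover number $\theta(\Gamma)$, and nothing in your outline extracts the additional structure from solvability; reducing to ``cover the neighbourhood of $p$ by two cliques'' is a restatement of the problem, not progress on it. In the published proof this step is handled by a genuinely different device: each non-edge $\{p,q\}$ of $\Gamma(G)$ is oriented using the fact that a Hall $\{p,q\}$-subgroup of a solvable group with no element of order $pq$ is Frobenius or $2$-Frobenius (orient by which prime occupies the kernel), and structural constraints on the resulting ``Frobenius digraph'' --- in particular on directed paths along odd cycles of $\bar{\Gamma}$ --- are what force $3$-colourability. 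No amount of tightening your minimal-normal-subgroup induction as sketched will substitute for an idea of this kind.

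The sufficiency direction has the same status: your plan (fix a $3$-colouring of $\bar{\Gamma}$, assign primes to vertices, impose non-edges by fixed-point-free actions chosen via Dirichlet and \Cref{zsigmondy}) is the right skeleton and broadly matches the published construction, but the coherence problem you name --- choosing all primes and modules simultaneously so that the iterated semidirect product has exactly the prescribed edges, with none missing and none extra --- is the substance of the cited theorem, and your sketch defers it rather than solving it. Note also that unintended edges are the real danger in such constructions: products of elements from different building blocks can create composite element orders you did not plan for, which is why the published argument builds the group carefully from the three colour classes rather than gluing one Frobenius action per non-edge. In summary: for the purposes of this paper the correct ``proof'' is the citation the authors give; your write-up correctly identifies where the difficulty lives but supplies a complete argument only for triangle-freeness of $\bar{\Gamma}$, so it does not constitute a proof of the proposition.
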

\begin{proposition}[{\cite[Theorem 4.2]{MR4506711}}]
\label{num_as_grps}
Let $\pi$ be a finite set of primes.
There are at most $O(|\pi|^7)$ pairwise non-isomorphic almost simple groups $G$ such that
$\pi(G) \subseteq \pi$.
\end{proposition}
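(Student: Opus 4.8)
The plan is to combine the Classification of Finite Simple Groups with Zsigmondy's theorem (\Cref{zsigmondy}). Since $G$ is almost simple, its socle $S=\mathrm{soc}(G)$ is a non-abelian simple group with $S\trianglelefteq G\leq\mathrm{Aut}(S)$, so $\pi(S)\subseteq\pi(G)\subseteq\pi$, and the groups $G$ with a fixed socle $S$ correspond to the subgroups of $\mathrm{Out}(S)$ (equivalently, the groups between $\mathrm{Inn}(S)$ and $\mathrm{Aut}(S)$). It therefore suffices to bound two quantities and multiply them: the number of admissible socles $S$ with $\pi(S)\subseteq\pi$, and, for each such $S$, the number of almost simple groups with socle $S$ satisfying $\pi(G)\subseteq\pi$.

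First I would dispose of the easy families. The sporadic groups and the Tits group contribute only $O(1)$. For the alternating groups, $\pi(A_n)$ is exactly the set of primes at most $n$, so $\pi(A_n)\subseteq\pi$ forces every prime $\leq n$ to lie in $\pi$; hence the number of primes $\leq n$ is at most $|\pi|$, and by \Cref{primes} this bounds $n$ and, with it, the number of admissible $n$ by a quantity of order $|\pi|\log|\pi|$. As $|\mathrm{Out}(A_n)|\leq 4$, the alternating contribution is polynomial in $|\pi|$.

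The crux is the groups of Lie type. Fixing one of the finitely many types, write $S=S(q)$ of (untwisted) rank $\ell$ over $\mathbb{F}_q$ with $q=p^f$. The characteristic $p$ must lie in $\pi$, giving at most $|\pi|$ choices. To bound $\ell$, I would note that $|S|$ is divisible by factors $q^{d_i}-1$ for the degrees $d_i$ running up to the Coxeter number (for classical types these include $q-1,q^2-1,\dots,q^{\ell}-1$ or their twisted analogues); by \Cref{zsigmondy} each $q^{d_i}-1$ carries a primitive prime divisor, and primitivity forces these to be pairwise distinct, so $|\pi(S)|\gtrsim\ell$ and hence $\ell\leq|\pi|+O(1)$. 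To bound the field, for fixed type, rank and characteristic, distinct exponents $f$ yield distinct primitive prime divisors of $p^{f}-1$, each forced into $\pi$, so the number of admissible $f$ is $O(|\pi|)$. Thus the number of Lie-type socles is of order $|\pi|^{3}$.

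Finally, for the per-socle count I would bound the number of $\pi$-order subgroups of $\mathrm{Out}(S)$ (only these can give $\pi(G)\subseteq\pi$). The diagonal part has order $\leq\ell+1=O(|\pi|)$ and the graph part order $\leq 6$; for the field part, cyclic of order $f$, I would again invoke \Cref{zsigmondy}: the divisors $d\mid f$ contribute pairwise distinct primitive primes of $p^{d}-1$ to $\pi(S)$, so $f$ has at most $|\pi|+O(1)$ divisors and hence at most $O(|\pi|)$ usable field-automorphism subgroups. Multiplying these contributions bounds the number of almost simple groups with socle $S$ by a further polynomial factor. The main obstacle, and the step demanding the most care, is precisely this Lie-type bookkeeping: one must simultaneously control the rank, the field exponent, and the outer automorphism group using Zsigmondy primes, and then track the exponents carefully through the product to arrive at the stated bound $O(|\pi|^{7})$.
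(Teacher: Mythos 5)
This proposition is quoted, not proved, in the paper: it is \cite[Theorem 4.2]{MR4506711} (Cameron--Maslova), so there is no internal proof to measure you against, and the comparison must be with the original source, whose strategy --- CFSG, then Zsigmondy primes to tie the characteristic, rank and field degree of a Lie-type socle to distinct elements of $\pi$, then an accounting of the groups between $S$ and $\mathrm{Aut}(S)$ --- is essentially the one you propose. Your Lie-type bookkeeping is sound: primitive prime divisors of the factors $q^{d_i}-1$ give $\mathrm{rank}(S)\leq |\pi|+O(1)$ (mind the repeated degree in type $D_\ell$ and the Zsigmondy exceptions of \Cref{zsigmondy}, both $O(1)$ corrections), and for fixed type, rank and characteristic the map sending an admissible field degree $f$ to a primitive prime divisor of $p^{f}-1$ (or of $p^{2f}-1$ in the twisted cases) is injective into $\pi$, giving $O(|\pi|)$ choices of $f$ and hence $O(|\pi|^{3})$ Lie-type socles.

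Two steps need repair, though the slack in the exponent $7$ absorbs both. First, the alternating count: $\pi(A_n)\subseteq\pi$ holds exactly when $n$ is below the least prime $q\notin\pi$, so the number of admissible $n$ is roughly $q$, and you need $q\leq p_{|\pi|+1}=O(|\pi|\log|\pi|)$. That is a Chebyshev-type upper bound on the $(k+1)$-st prime; \Cref{primes} (Bertrand/Nagura) only yields $p_{m+1}<\tfrac{6}{5}\,p_m$ for $p_m\geq 25$, hence an \emph{exponential} bound on $q$ in terms of $|\pi|$, so the lemma you invoke cannot deliver the estimate you assert --- replace it with a Chebyshev or prime-counting bound. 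Second, the per-socle count: $\mathrm{Out}(S)$ is an extension of its diagonal part by the field and graph parts, not their direct product, so the number of its subgroups is \emph{not} the product of the three separate subgroup counts. A subgroup is determined by its intersection with the normal diagonal part $D$, its image in $\mathrm{Out}(S)/D$, and a complement-type datum, and this last can contribute a genuine extra factor (a dihedral group of order $2p$ has $p$ subgroups of order $2$, not $2$ of them). Here that factor is controlled, being bounded by a small power of $|D|\leq \mathrm{rank}(S)+1=O(|\pi|)$, so the total stays polynomial and any polynomial of degree at most $7$ proves the statement; but as written the multiplicative step is unjustified and is precisely where the careful exponent-tracking you defer would have to happen.
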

We now can show the following result, which is a consequence of \cite{MR2986582}.

\begin{proposition}
\label{unrec_sol_rad}
    Let $G$ be a finite group. If $G$ is unrecognisable by prime simplicial complex, then either
    \begin{enumerate}
        \item \label{unrec_cond_1} there exists a group $H$ with non-trivial solvable radical with $\Pi(G) = \Pi(H)$, or
        \item The vertex 2 is adjacent to all other primes in $\Gamma(G)$ and $\Pi(G)$ has an odd maximal simplex.
    \end{enumerate}
Moreover, if \ref{unrec_cond_1} holds, then $G$ is unrecognisable by prime simplicial complex.
\end{proposition}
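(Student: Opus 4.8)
The plan is to prove the two directions separately, with the reference \cite{MR2986582} carrying the converse (\emph{Moreover}) statement and the structural propositions already quoted carrying the main implication. Throughout I will use that $\Pi(H)=\Pi(G)$ forces $\pi(H)=\pi(G)$ (the vertex set is the union of all simplices) and $\Gamma(H)=\Gamma(G)$ (the edges are the two-element simplices).

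For the converse, suppose condition \ref{unrec_cond_1} holds, so there is a group $H$ with $\mathrm{Sol}(H)\neq 1$ and $\Pi(H)=\Pi(G)$. By \cite{MR2986582}, a finite group with nontrivial soluble radical is unrecognisable by spectrum; applying this to $H$ shows $H$ is unrecognisable by spectrum, and then by \Cref{graph_imps}(2) it is unrecognisable by prime simplicial complex. Thus there are infinitely many pairwise non-isomorphic groups $K$ with $\Pi(K)=\Pi(H)=\Pi(G)$, so $G$ is unrecognisable by prime simplicial complex. This is the load-bearing step: the full strength of \cite{MR2986582} is what lets us pass from a single witness with nontrivial soluble radical to an infinite family sharing a common complex.

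For the main implication I would assume $G$ is unrecognisable by $\Pi$ and that \ref{unrec_cond_1} fails, and deduce condition (2). Failure of \ref{unrec_cond_1} means every group $H$ with $\Pi(H)=\Pi(G)$ has $\mathrm{Sol}(H)=1$, hence is insoluble, and unrecognisability provides infinitely many of them, all with $\pi(H)=\pi(G)$. First I would show $2$ is universal in $\Gamma(G)$. If not, each such $\Gamma(H)=\Gamma(G)$ has a vertex non-adjacent to $2$, so \Cref{GKthmVasilev} yields a simple group $S$ and a \emph{soluble} normal subgroup $K\unlhd H$ with $S\leq H/K\leq\mathrm{Aut}(S)$; since $K\leq\mathrm{Sol}(H)=1$ we get $K=1$, so every such $H$ is almost simple. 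But \Cref{num_as_grps} bounds the number of almost simple groups with $\pi(H)\subseteq\pi(G)$ by $O(|\pi(G)|^7)$, contradicting the existence of infinitely many. Hence $2$ is universal.

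It remains to produce an odd maximal simplex, and the key observation is that its absence is equivalent to $2$ being a \emph{cone point} of $\Pi(G)$, i.e. $F\cup\{2\}\in\Pi(G)$ for every simplex $F$: if no maximal simplex avoids $2$, then any simplex $F$ extends to a maximal simplex containing $2$, forcing $F\cup\{2\}\in\Pi(G)$. In that situation, taking any $H$ with $\Pi(H)=\Pi(G)$ and applying \Cref{directprod} with $\Pi(C_2)=\{\emptyset,\{2\}\}$ gives
\[
\Pi(H\times C_2)=\Pi(H)\cup\{F\cup\{2\}:F\in\Pi(H)\}=\Pi(H)=\Pi(G),
\]
while $\mathrm{Sol}(H\times C_2)\supseteq 1\times C_2\neq 1$. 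This exhibits a group with nontrivial soluble radical realising $\Pi(G)$, contradicting the failure of \ref{unrec_cond_1}; therefore some maximal simplex avoids $2$, completing condition (2). The genuine obstacle here is conceptual rather than computational: one must set up the right dichotomies ($2$ universal or not; cone point or not) so that \Cref{GKthmVasilev}, \Cref{num_as_grps} and the $C_2$-appending trick of \Cref{directprod} each apply, the cone-point reformulation being the step that must be arranged precisely so that adjoining a factor of $C_2$ leaves $\Pi$ unchanged.
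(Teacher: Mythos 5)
Your proof is correct and takes essentially the same route as the paper: the $C_2$-appending trick via \Cref{directprod} to show the absence of an odd maximal simplex would make $2$ a cone point and produce a witness for (\ref{unrec_cond_1}), \Cref{GKthmVasilev} combined with \Cref{num_as_grps} to force $2$ to be universal, and \cite{MR2986582} for the ``moreover'' direction. If anything, your write-up is slightly tighter at two points: you derive the almost-simplicity contradiction only for groups sharing $\Pi(G)$ (where failure of (\ref{unrec_cond_1}) gives trivial solvable radical, hence $K=1$ in \Cref{GKthmVasilev}) and contradict unrecognisability by prime simplicial complex directly, whereas the paper asserts that \emph{every} group with the same prime graph is almost simple, which its hypotheses do not quite cover; and you correctly apply \cite{MR2986582} to the witness $H$ of (\ref{unrec_cond_1}) and transfer via $\Pi(H)=\Pi(G)$, rather than to $G$ itself as the paper's wording suggests.
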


\begin{proof}
Suppose that $G$ is unrecognisable by prime simplicial complex and that for each group $H$ with $\Pi(G) = \Pi(H)$, the solvable radical of $H$ is trivial. Note that $G$ is also unrecognisable by prime graph by \Cref{graph_imps}.
If every prime simplex $s$ in $\Pi(G)$ is such that $2s \in \Pi(G)$, then $\Pi(C_2^k \times G) = \Pi(G)$ for any $k\geq 1$, which is a contradiction. Hence there exists a prime simplex $r\in \Pi(G)$ such that $2r \notin \Pi(G)$. If there exists a prime $p$ dividing $r$ such that $2p \notin \Pi(G)$, then $2p \notin \Gamma(G)$. Hence, by \Cref{GKthmVasilev} $G$ is almost simple, as is any group $H$ with the same prime graph. But by \Cref{num_as_grps}, there are only finitely many almost simple groups $H$ with $\pi(H) \subseteq \pi(G)$, contradicting the unrecognisability of $G$ by prime graph. Therefore, $2$ and $p$ must be adjacent in $\Gamma(G)$. Applying this argument as many times as necessary, we see that 2 must be adjacent to every odd prime in $\Gamma(G)$.
Finally, assume that there exists $N\unlhd G$ such that $N$ is non-trivial and solvable. Then, by \cite[Theorem]{MR2986582}, there exist infinitely many groups with the same spectrum as $G$. Each of these groups also has the same prime simplicial complex as $G$.
\end{proof}

\begin{proposition}
Suppose $G$ is $k$-recognisable by prime simplicial complex for some integer $k$. Then either $G$ is almost simple, or $2\in \Gamma(G)$ is connected to every other vertex, and $\Pi(G)$ has an odd maximal simplex.
\end{proposition}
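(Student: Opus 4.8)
The plan is to read the result off from \Cref{unrec_sol_rad} together with \Cref{GKthmVasilev}, exploiting that being $k$-recognisable for some finite $k$ is precisely the negation of being unrecognisable. Throughout I would use that $\Pi(G)$ is downward closed, so every simplex is contained in some maximal simplex.

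First I would show that $\mathrm{Sol}(G)=1$. If instead $\mathrm{Sol}(G)$ were non-trivial, then $G$ itself would be a group with non-trivial solvable radical satisfying $\Pi(G)=\Pi(G)$, so condition (\ref{unrec_cond_1}) of \Cref{unrec_sol_rad} would hold; the \emph{moreover} clause of that proposition would then force $G$ to be unrecognisable by prime simplicial complex, contradicting $k$-recognisability. Hence $\mathrm{Sol}(G)=1$, and in particular $G$ is insoluble. If $G$ happens to be almost simple we land in the first alternative and are done, so from here I would assume $G$ is \emph{not} almost simple and aim to verify the two remaining conclusions.

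To see that $2$ is universal in $\Gamma(G)$ I would argue by contradiction: suppose some odd prime $p$ is non-adjacent to $2$. Since $G$ is insoluble and $\Gamma(G)$ has a vertex non-adjacent to $2$, \Cref{GKthmVasilev} supplies a non-abelian simple group $S$ and a soluble normal subgroup $K\unlhd G$ with $S\leq G/K\leq \mathrm{Aut}(S)$. But $K\leq \mathrm{Sol}(G)=1$ forces $K=1$, so $G$ is almost simple, contrary to assumption; thus $2$ is adjacent to every odd prime. For the odd maximal simplex I would again argue by contradiction: if every maximal simplex of $\Pi(G)$ contained $2$, then for any simplex $S$, choosing a maximal simplex $M\supseteq S$ gives $\{2\}\cup S\subseteq M\in \Pi(G)$, whence $\{2\}\cup S\in \Pi(G)$. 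By \Cref{directprod} this yields $\Pi(C_2^k\times G)=\Pi(G)$ for all $k\geq 1$, producing infinitely many pairwise non-isomorphic groups (of distinct orders) with the same prime simplicial complex and contradicting $k$-recognisability.

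The step I expect to require the most care is pinning down $\mathrm{Sol}(G)=1$, since this is exactly what upgrades the conclusion of \Cref{GKthmVasilev} from ``$G/K$ almost simple'' to ``$G$ almost simple''; by contrast the universality and odd-simplex arguments are short once the solvable radical is known to be trivial.
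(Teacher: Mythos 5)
Your proof is correct and follows essentially the same route as the paper's: triviality of the solvable radical via the \emph{moreover} clause of \Cref{unrec_sol_rad}, almost simplicity via \Cref{GKthmVasilev} with $K\leq \mathrm{Sol}(G)=1$, and the odd maximal simplex via the $\Pi(C_2^k\times G)=\Pi(G)$ trick. You spell out explicitly several steps the paper compresses into ``The result follows,'' but the decomposition and key lemmas are identical.
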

\begin{proof}
Suppose $G$ is $k$-recognisable by prime simplicial complex for some integer $k$. Then $G$ cannot be soluble by \Cref{unrec_sol_rad}. If $\Gamma(G)$ has a prime $p$ not adjacent to 2, then \Cref{GKthmVasilev} implies that $G$ is almost simple, otherwise it is unrecognisable by \Cref{unrec_sol_rad}. The result follows.
\end{proof}
\begin{lemma}
Let $G$ be a finite group, and let $k$ be the minimum number of prime simplices of $\Pi(G)$ with union equal to $\pi(G)$. Then the direct product $G^m$ is unrecognisable by prime simplicial complex for each $m\geq k$.
\end{lemma}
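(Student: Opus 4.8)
The plan is to show that as soon as $m \geq k$, the complex $\Pi(G^m)$ collapses to the complete simplicial complex on the vertex set $\pi(G)$, which is then shared with infinitely many nilpotent groups, forcing unrecognisability.

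First I would appeal to \Cref{directprod}: a set $S$ is a simplex of $G^m$ exactly when $S = S_1 \cup \cdots \cup S_m$ for simplices $S_1, \dots, S_m$ of $G$. By the definition of $k$ there exist simplices $T_1, \dots, T_k \in \Pi(G)$ with $T_1 \cup \cdots \cup T_k = \pi(G)$, and I may assume $G$ is non-trivial so that $k \geq 1$. For $m \geq k$, I would pad this list by repeating $T_1$ a further $m-k$ times; the union of the resulting $m$ simplices is still $\pi(G)$, so \Cref{directprod} yields $\pi(G) \in \Pi(G^m)$.

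Next I would use the fact that every prime simplicial complex is downward closed: if $g$ has order $\prod_{p\in S}p$, then for any $S' \subseteq S$ a suitable power of $g$ has order $\prod_{p \in S'} p$, so $S' \in \Pi(G)$ whenever $S \in \Pi(G)$. Since $\pi(G^m) = \pi(G)$ and $\pi(G)$ is itself a simplex of $G^m$, this forces $\Pi(G^m)$ to consist of all subsets of $\pi(G)$. By the example following the definition of $\Pi$, this is precisely the prime simplicial complex of any nilpotent group whose set of prime divisors is $\pi(G)$.

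Finally, to deduce unrecognisability I would exhibit infinitely many pairwise non-isomorphic groups realising this complex. Writing $\pi(G) = \{p_1, \dots, p_t\}$, the cyclic groups $C_{p_1^a p_2 \cdots p_t}$ for $a = 1, 2, 3, \dots$ are nilpotent with prime divisor set exactly $\pi(G)$, hence each has prime simplicial complex equal to $\Pi(G^m)$; as they have distinct orders, they are pairwise non-isomorphic. Thus no finite $N$ bounds the number of groups with complex $\Pi(G^m)$, so $G^m$ is unrecognisable by prime simplicial complex. The argument is structural rather than computational, so I do not anticipate a genuine obstacle; the only delicate point is the padding step, which is where the hypothesis $m \geq k$ does its work, guaranteeing that the full vertex set $\pi(G)$ is realised as a single simplex of $G^m$.
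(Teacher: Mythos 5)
Your proof is correct, and the first half coincides with the paper's: both arguments observe that once $m \geq k$ the full vertex set $\pi(G)$ is realised as a single simplex (you make the padding via iterated use of \Cref{directprod} explicit, which the paper leaves implicit), so that $\Pi(G^m)$ is the complete complex on $\pi(G)$, i.e.\ the complex of a nilpotent group with that prime set. Where you diverge is in how unrecognisability is then extracted. The paper identifies $\Pi(G^m)$ with $\Pi(C)$ for $C$ a direct product of cyclic groups of prime order and invokes \Cref{unrec_sol_rad}: since $C$ has non-trivial solvable radical, the ``moreover'' clause of that proposition (which rests on the cited theorem of \cite{MR2986582} producing infinitely many groups with a given spectrum) yields unrecognisability. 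You instead exhibit the infinite family directly: the cyclic groups $C_{p_1^a p_2 \cdots p_t}$ for $a = 1, 2, 3, \dots$ are pairwise non-isomorphic and all realise the complete complex on $\pi(G)$, because the complex of a finite group depends only on the squarefree parts of its element orders. Your route is more elementary and self-contained --- it needs no result beyond Cauchy's theorem and downward closure of $\Pi$, whereas the paper's route imports a non-trivial external theorem --- while the paper's route has the merit of uniformity, since \Cref{unrec_sol_rad} is the tool used throughout Section 3 and handles situations where the shared complex is not complete. One small point either way: the statement implicitly assumes $G$ non-trivial (so $k \geq 1$), which you flag and the paper does not.
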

\begin{proof}
If $m\geq k$, then $\Pi(G^m)$ contains a simplex equal to $\pi(G)$, and hence the simplex lattice is complete. Therefore, $\Pi(G^m)= \Pi(C)$, where $C = \cup_{p \in \pi(G)} C_p$. The result follows from \Cref{unrec_sol_rad}.
\end{proof}

\begin{proposition}
\label{dir_prod}
Let $S$ be a finite group of even order. Then the direct product $G=S^m$, $m\geq 2$ is not recognisable by prime simplicial complex.
\end{proposition}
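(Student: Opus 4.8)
The plan is to show something stronger than non-recognisability, namely that $G = S^m$ is \emph{unrecognisable}: I would exhibit a group $H$ with non-trivial solvable radical and $\Pi(H) = \Pi(G)$, so that the ``moreover'' clause of \Cref{unrec_sol_rad} applies. First I would dispose of the case $\mathrm{Sol}(S) \neq 1$: here $\mathrm{Sol}(S^m) = \mathrm{Sol}(S)^m \neq 1$, so taking $H = G$ itself already satisfies condition (1) of \Cref{unrec_sol_rad}, and we are done. This settles every soluble $S$, and (by Feit--Thompson) every $S$ of odd order; conversely, an insoluble group has even order, so in the one remaining case $\mathrm{Sol}(S) = 1$ the even-order hypothesis is automatic, and we may assume it.

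Next I would record the combinatorial shape of $\Pi(S^m)$. By \Cref{directprod} a set of primes lies in $\Pi(S^m)$ exactly when it is a union of $m$ simplices of $\Pi(S)$. Since $m \geq 2$ and each singleton $\{p\}$ with $p \in \pi(S)$ is a simplex of $S$, any two primes of $\pi(S)$ may be placed in distinct coordinates, so $\Gamma(S^m)$ is complete and $2$ is a universal vertex. I would then ask whether $\Pi(S^m)$ has a maximal simplex whose primes are all odd (an \emph{odd} maximal simplex). If it has none, then every maximal simplex contains $2$, so every simplex $s$ satisfies $\{2\} \cup s \in \Pi(S^m)$; hence $\Pi(C_2 \times S^m) = \Pi(S^m)$ by \Cref{directprod}, and since $C_2 \times S^m$ has non-trivial solvable radical, \Cref{unrec_sol_rad} finishes this sub-case.

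The substantive case is when $\Pi(S^m)$ has an odd maximal simplex (for instance $\{3,5\}$ when $S = A_5$), so that the $C_2$-trick fails. Here I would reduce to $m = 2$: it suffices to construct a \emph{solvable} group $B$ with $\Pi(B) = \Pi(S^2)$, since then $H = S^{m-2} \times B$ has $\Pi(H) = \Pi(S^{m-2} \times S^2) = \Pi(S^m)$ by \Cref{directprod} and solvable radical $B \neq 1$. To build $B$ I would realise $\Pi(S^2)$ — whose $1$-skeleton is complete — as the prime simplicial complex of a direct product of solvable gadgets: cyclic groups for full subsimplices in the odd part, and Frobenius or dihedral groups $C_p \rtimes K$ (whose element orders are exactly those of the kernel and of the complement, with no mixing) to realise each required maximal simplex while \emph{forbidding} the unwanted larger products. \Cref{solv_prime_graphs} certifies that no obstruction arises at the level of prime graphs. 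The prototype is $\Pi(A_5^2) = \Pi(S_3 \times D_{10})$, where each dihedral factor $D_{2p}$ contributes the non-edge $\{2,p\}$ and the two factors share the vertex $2$.

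The main obstacle is exactly this construction. Every maximal simplex of $\Pi(S^2)$ has the form $M \cup M'$ with $M, M'$ maximal simplices of $\Pi(S)$, and one must choose the gadgets so that the combined complex is \emph{precisely} $\Pi(S^2)$: each such $M \cup M'$ must be recovered, yet no least common multiple formed across distinct gadgets may produce a set of primes outside $\Pi(S^2)$. Controlling these cross-factor products — arranging that each solvable factor carries few primes per element while the shared vertex $2$ and the completeness of $\Gamma(S^2)$ supply all the required edges — is the technical heart, and is precisely where the even order of $S$ (supplying the prime $2$) is used.
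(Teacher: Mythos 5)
Your proof is incomplete precisely at its ``technical heart'', and the missing step is not a routine verification: it is (at least in the authors' view) an open problem. In the decisive case --- $\mathrm{Sol}(S)=1$ and $\Pi(S^m)$ has an odd maximal simplex --- your argument rests on constructing a \emph{solvable} group $B$ with $\Pi(B)=\Pi(S^2)$, which you sketch via direct products of cyclic and Frobenius ``gadgets'' but never carry out. Note first that your overall target is strictly stronger than the statement: you aim for \emph{unrecognisability} via the ``moreover'' clause of \Cref{unrec_sol_rad}, whereas the remark immediately following \Cref{dir_prod} in the paper states explicitly that it is unknown whether these groups are unrecognisable or merely not recognisable. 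Second, the gadget mechanism has a structural problem: by \Cref{directprod}, the simplices of a direct product are exactly unions of one simplex per factor, so a product of gadgets can only realise complexes that decompose this way, and there is no reason a general $\Pi(S^2)$ --- whose maximal simplices are the unions $M\cup M'$ of pairs of maximal simplices of $\Pi(S)$, typically overlapping in intricate patterns --- admits such a decomposition; when it does not, the primes involved must all sit inside a single solvable factor, which returns you to the original realisability problem for a single solvable group. \Cref{solv_prime_graphs} is no help here: it certifies only the $1$-skeleton (which is complete, hence unobstructed) and says nothing about which higher simplices a solvable group can and cannot have. So your proof establishes the proposition only when $\mathrm{Sol}(S)\neq 1$ or when every maximal simplex of $\Pi(S^m)$ contains $2$ --- cases where, to your credit, you correctly get the stronger conclusion of unrecognisability, with $H=G$ and $H=C_2\times S^m$ respectively --- but it fails for, e.g., $S$ simple with an odd maximal simplex present, such as $S=A_5$ beyond your single worked prototype $\Pi(A_5^2)=\Pi(S_3\times D_{10})$.

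The paper's actual proof is entirely different, short, and unconditional, and it only claims non-recognisability. Take $H=G\rtimes C$ where $C=\langle(1\,2)\rangle\leq S_m$ permutes the first two coordinates, so $|H|=2|G|$ and $H\not\cong G$. For $g=(g_1,\dots,g_m,\sigma)$ with $\sigma=(1\,2)$, computing $g^{2k}$ and $g^{2k+1}$ shows that $|g|$ divides $2\,\mathrm{lcm}(|g_1g_2|,|g_3|,\dots,|g_m|)$ (using that $g_1g_2$ and $g_2g_1$ are conjugate). Now pick an involution $i\in S$ --- this is the one place the even-order hypothesis is used --- and observe that $g'=(i,\,g_1g_2,\,g_3,\dots,g_m)\in G$ has order a multiple of $|g|$, so every simplex of $\Pi(H)$ already occurs in $\Pi(G)$, giving $\Pi(H)=\Pi(G)$. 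This sidesteps all solvable-radical machinery; indeed, when $\mathrm{Sol}(S)=1$ the witness $H$ itself has trivial solvable radical, which is exactly why the stronger unrecognisability conclusion you were pursuing does not follow and remains open.
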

\begin{proof}
We will show that $G$ has the same prime simplicial complex as $H=G \rtimes C$, where $C= \langle (1\,2)\rangle \leq S_m$. Let $g=(g_1, g_2, \dots, g_m, \sigma) \in H$. If $\sigma=1$, then $g \in G \leq H$, and so all prime simplices dividing the order of $g$ lie in $\Pi(G)$. So now suppose $\sigma=(1\, 2)$. For each $k \geq 1$ we have  
\[
g^{2k} = ((g_1g_2)^k, (g_2g_1)^k, g_3^{2k}, \dots g_m^{2k}, 1)
\]
and 
\[
g^{2k+1} = ((g_1g_2)^kg_1, (g_2g_1)^kg_2, g_3^{2k+1}, \dots g_m^{2k+1}, \sigma)
\]
Noting also that $g_1g_2$ and $g_2g_1$ are conjugate and thus of the same order, we deduce that the order of $g$ divides $2 \times \mathrm{LCM}(|g_1g_2|,|g_3|,\dots |g_m|)$. Now, letting $i \in G$ be an involution, we find that  $g' = (i, g_1g_2, g_3, \dots g_m,1) \in G$ has order a multiple of $|g|$. Hence, $\Pi(H) = \Pi(G)$ as required.
\end{proof}
\begin{remark}
None of the known groups that are not almost simple but recognisable by spectrum are recognisable by prime simplicial complex by \Cref{dir_prod}. This demonstrates that the converse to \Cref{graph_imps} does not hold.
It is unknown if the groups in \Cref{dir_prod} are \textit{unrecognisable} by prime simplicial complex, or just not recognisable. 
\end{remark}
\begin{proposition}
\label{recog_psc}
The group $G = \mathrm{PSL}_2(173)\times \mathrm{PSL}_2(283)$ is recognisable by prime simplicial complex.
\end{proposition}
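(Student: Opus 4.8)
The plan is to assume that $H$ is a finite group with $\Pi(H)=\Pi(G)$ and to deduce $H\cong G$. First I would record the combinatorics of $\Pi(G)$. Writing the two factors as $\mathrm{PSL}_2(173)$ and $\mathrm{PSL}_2(283)$ and reading off the torus orders $(q\mp 1)/2$, one gets $\pi(G)=\{2,3,29,43,47,71,173,283\}$, with $\{2,43\},\{3,29\},\{173\}$ the maximal simplices of the first factor and $\{3,47\},\{2,71\},\{283\}$ those of the second. Using \Cref{directprod} to form unions, one finds that $2$ and $3$ are universal vertices of $\Gamma(G)$, while the remaining six primes split into the two maximal cocliques $\{29,43,173\}$ and $\{47,71,283\}$ with all cross-edges present (a $K_{3,3}$). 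The salient features of $\Pi(G)$ are then: $\{173,283\}$ is the unique maximal simplex of size $2$; there are maximal simplices of odd size; and, of the nine maximal cliques $\{2,3,a,b\}$ of $\Gamma(G)$, only $\{2,3,43,47\}$ and $\{2,3,29,71\}$ are actually simplices. Since $\Pi(H)=\Pi(G)$ forces $\Gamma(H)=\Gamma(G)$ and $\pi(H)=\pi(G)$, and since $\overline{\Gamma(G)}$ contains the triangle on $\{29,43,173\}$, \Cref{solv_prime_graphs} shows that $H$ is insoluble.

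Next I would pin down the non-abelian composition factors of $H$. Applying \Cref{vas_indep_set} to each of the two cocliques bounds $\pi(\mathrm{Sol}(H))$, forcing at least two primes of each coclique into the non-soluble part $H/\mathrm{Sol}(H)$. The key device is \Cref{centralizerproof}: the maximal simplices of $\Pi(G)$ containing $173$ are exactly $\{173,283\}$, $\{173,3,47\}$ and $\{173,2,71\}$, so the ``link'' of $173$ reproduces the maximal simplices $\{283\}$, $\{3,47\}$, $\{2,71\}$ of $\mathrm{PSL}_2(283)$ inside a centralizer $C_H(g_{173})$, and symmetrically for $283$. Combined with the finiteness statement \Cref{num_as_grps} and \Cref{zsigmondy}, which rule out a single simple group carrying both $173$ and $283$ within only eight allowed primes (an almost simple $H$ would need $173,283\in\pi(\mathrm{Soc}(H))$, forcing a group too large to avoid extra primitive prime divisors), this should show that $H$ has exactly two non-abelian composition factors, isomorphic to $\mathrm{PSL}_2(173)$ and $\mathrm{PSL}_2(283)$, one accounting for each coclique.

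Finally I would use the full simplicial data — not merely $\Gamma(H)$ — to rule out the prime-graph twin of $G$ and any genuine extension. Here the decisive constraints are the simplices that $\Pi(G)$ omits: the absence of the ``mixed'' maximal cliques such as $\{2,3,43,71\}$ and $\{2,3,29,47\}$, together with the precise pairing recorded by the two size-four simplices $\{2,3,43,47\}$ and $\{2,3,29,71\}$, encodes which torus prime of one factor may combine with which of the other, and hence forces the two identified factors to sit in a direct product rather than a diagonal or twisted configuration. Combined with \Cref{semi} — applied to each edge $pr$ of $\Gamma(H)$ to exclude a nontrivial soluble radical, or a module in characteristic $r$, manufacturing element orders not present in $\Pi(G)$ — and \Cref{directprod}, this should yield $H\cong \mathrm{PSL}_2(173)\times\mathrm{PSL}_2(283)$.

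I expect the main obstacle to be precisely this last step. By construction $G$ is not recognisable by its prime graph, so some group shares $\Gamma(G)$, and the entire argument must extract enough from the higher-dimensional simplices to separate $G$ from that twin. Thus one must classify \emph{all} insoluble groups realising this prime graph — not merely identify the composition factors — and then verify that only $G$ realises the exact collection of maximal simplices, in particular excluding large almost simple groups and non-split extensions whose element orders would fill in one of the forbidden mixed cliques. Controlling the soluble radical and the possible module contributions via \Cref{semi} precisely enough to block every such extra simplex is where the real work lies.
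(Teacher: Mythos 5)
Your combinatorial description of $\Pi(G)$ is accurate and matches the paper's starting point (the two cocliques $\{29,43,173\}$, $\{47,71,283\}$, the universal vertices $2,3$, and the fact that only $\{2,3,43,47\}$ and $\{2,3,29,71\}$ among the nine maximal cliques are simplices), and your uses of \Cref{solv_prime_graphs} and \Cref{vas_indep_set} are sound. The genuine gap is in your second step: nothing in your toolkit actually enumerates the candidate non-abelian composition factors of $H$. \Cref{num_as_grps} is a pure counting bound --- it guarantees there are at most $O(|\pi|^7)$ almost simple groups with the given prime support, but it produces no list --- and \Cref{zsigmondy} together with the link argument via \Cref{centralizerproof} only constrains candidates; it does not classify them. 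To justify your claim that ``$H$ has exactly two non-abelian composition factors, isomorphic to $\mathrm{PSL}_2(173)$ and $\mathrm{PSL}_2(283)$'' you would need to exclude every simple group $K$ with $\pi(K)\subseteq\{2,3,29,43,47,71,173,283\}$, for instance every $\mathrm{PSL}_2(q)$ over every prime power $q$, and you have no mechanism for that. The paper's decisive observation, absent from your sketch, is that $5\notin\pi(G)$, so every candidate factor appears in the known classification of simple groups of order coprime to $5$ (\Cref{tab:co5}: only $\mathrm{PSL}_2(q)$, $\mathrm{PSL}_3(q)$, $\mathrm{PSU}_3(q)$, $\mathrm{G}_2(q)$, ${}^2\mathrm{G}_2(q)$, ${}^3D_4(q)$ under congruence conditions); combined with \Cref{mult_order_lemma} and the table of multiplicative orders $i(p,r)$ (\Cref{tab:mult_order}), this reduces the possibilities to exactly the two desired groups. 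Without this, or an equally effective classification, the middle of your argument does not go through.

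You correctly identify the soluble radical as the hard part, but your proposed resolution --- the forbidden mixed cliques $\{2,3,43,71\}$, $\{2,3,29,47\}$ together with \Cref{semi} --- does not close it: \Cref{semi} only gives a dichotomy, and to exclude the module alternative one must verify, characteristic by characteristic, that fixed points of suitable elements create orders outside $\Pi(G)$. The paper does this by a concrete Magma computation with the modular character tables of $\mathrm{PSL}_2(173)\times\mathrm{PSL}_2(283)$: elements of orders $2059=29\cdot 71$ and $2021=43\cdot 47$ fix non-zero vectors in every characteristic $r\in\pi(G)$, so a non-trivial $\mathrm{Sol}(H)$ (already confined by \Cref{vas_indep_set}, as you note) would yield a simplex extending $\{29,71\}$ or $\{43,47\}$ that is not in $\Pi(G)$. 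The danger your sketch leaves open is precisely that some module could admit a fixed-point-free action of the critical elements, in which case recognisability would fail --- this is exactly the mechanism by which $\mathrm{Co}_3$, $\mathrm{McL}$, $\mathrm{J}_2$ and $\mathrm{M}_{12}$ are shown \emph{un}recognisable in \Cref{spor_recog}, so it cannot be waved away. Finally, you would still need the paper's two closing checks: that $\mathrm{soc}(H/\mathrm{Sol}(H))$ admits no further direct factors (any extra factor extends a maximal simplex such as $\{173,283\}$ beyond $\Pi(G)$), and a direct verification that no group strictly between $G$ and $\mathrm{Aut}(G)$ has the same complex; your plan to ``classify all insoluble groups realising this prime graph'' is both heavier than necessary and not achievable with the cited lemmas.
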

\begin{proof}
We begin by exhibiting the prime graphs $\Gamma(\mathrm{PSL}_2(173))$ and $\Gamma(\mathrm{PSL}_2(283))$:
\begin{figure}[h!]
    \centering

\begin{tikzpicture}[scale=1.2, every node/.style={font=\small}]
  \node[fill=black, circle, inner sep=2pt, label=left:2] (2) at (0,0) {};
  \node[fill=black, circle, inner sep=2pt, label=left:3] (3) at (1.5,0) {};
  \node[fill=black, circle, inner sep=2pt, label=left:29] (29) at (1.5,1.5) {};
  \node[fill=black, circle, inner sep=2pt, label=left:43] (43) at (0,1.5) {};
  \node[fill=black, circle, inner sep=2pt, label=above:173] (173) at (3,0.75) {};

  \draw (2) -- (43);
  \draw (3) -- (29);
\end{tikzpicture}
\hspace{3cm}
\begin{tikzpicture}[scale=1.2, every node/.style={font=\small}]
  \node[fill=black, circle, inner sep=2pt, label=left:2] (2) at (0,0) {};
  \node[fill=black, circle, inner sep=2pt, label=left:3] (3) at (1.5,0) {};
  \node[fill=black, circle, inner sep=2pt, label=left:47] (29) at (1.5,1.5) {};
  \node[fill=black, circle, inner sep=2pt, label=left:71] (43) at (0,1.5) {};
  \node[fill=black, circle, inner sep=2pt, label=above:283] (173) at (3,0.75) {};

  \draw (2) -- (43);
  \draw (3) -- (29);
\end{tikzpicture}
\caption{The prime graphs of $\mathrm{PSL}_2(173)$ (left) and $\mathrm{PSL}_2(283)$ (right).}
\end{figure}
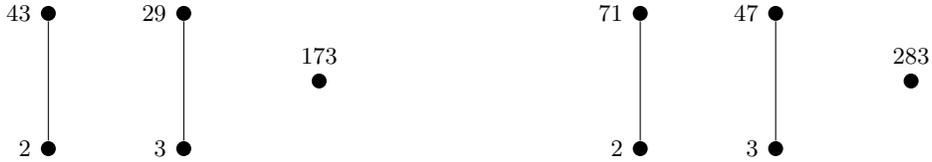

The simplices of $\mathrm{PSL}_2(173)$ and $\mathrm{PSL}_2(283)$ have size at most two, so their non-trivial simplices are exactly the vertices and edges of their prime graphs.
By \Cref{directprod} the simplices of $G$ are then of the form $S = S_1 \cup S_2$, with $S_1 \in \Pi(\mathrm{PSL}_2(173))$ and $S_2 \in \Pi(\mathrm{PSL}_2(283))$. 

Let $H$ be a group with $H \not\cong G$ but $\Pi(H) = \Pi(G)$. We begin by determining possible candidates $K$ for a non-abelian composition factor of $H$. Clearly any such $K$ must have $\pi(K) \subseteq \pi(H)$, so in particular, $5 \nmid |H|$. The non-abelian finite simple groups with order coprime to 5 are recorded in \cite[Table 1]{zavarnitsine2006recognition}, and reproduced here in \Cref{tab:co5}. In particular, $K$ must be a group of Lie type over $\mathbb{F}_q$, with $q=p^e$ and $p \in \pi(G)$.

In each case, $q-1$ divides $|K|$ and hence any (primitive) prime divisors of $q-1$ must belong to $\pi(G)$. This immediately implies $p\notin \{29,43,47,71\}$.
If $r \in \pi(G)$ divides $q^j-1=p^{je}-1$, then by \Cref{mult_order_lemma}, $je$ must be a multiple of the multiplicative order $i(p,r)$ of $p$ modulo $r$, and in particular have $p^{i(p,r)}-1$ as a divisor. We record each remaining possibility for $p,r$ along with $i(p,r)$ in \Cref{tab:mult_order}. 
In most cases, $\pi(p^{i(p,r)}-1) \not\subseteq \pi(G)$, so cannot arise. 

We now consider each option for $K$ in turn.
Firstly, $K \neq {}^3D_4(q), \mathrm{G}_2(q)$, since each has $q^6-1$ as a divisor, and hence has order divisible by $7\notin \pi(G)$.
If $K = {}^2\mathrm{G}_2(q)$ with $q=3^{2k+1}$ and $k>0$, then, by \Cref{tab:mult_order}, $q-1$ must only be divisible by 2, which is a contradiction by Lemma \ref{2-3prime} since $q>3$.
Suppose now that $K = \mathrm{PSL}_3(q)$ or $\mathrm{PSU}_3(q)$, and set $\epsilon=1$ and $-1$ respectively. Then $|K|$ is divisible both by a primitive prime divisor of $q-1$, and a primitive prime divisor of $q^{3(1-\epsilon)/2}-1$. Hence $p\neq 2,3$. Noting that $(p^3-\epsilon)\mid (q^3-\epsilon)$ allows us to eliminate $p= 173,283$ directly.

Finally, suppose $K = L_2(q)$. Noting that $(p^2-1) \mid (q^2-1)$ and that $|K|$ is divisible by at least three primes, we see that we must have either $K \cong \mathrm{PSL}_2(173)$, or $K\cong \mathrm{PSL}_2(283)$.

So each direct factor of the socle of $H$ is abelian, or isomorphic to one of $\mathrm{PSL}_2(173)$ or $\mathrm{PSL}_2(283)$.

Consider $H/\mathrm{Sol}(H)$, the quotient of $H$ by its solvable radical.
Note that $\{29,43,173\}$ and $\{47,71,283\}$ form co-cliques in $\Gamma(H) \cong \Gamma(G)$, so at most one prime from each set divides $|\mathrm{Sol}(H)|$ by \Cref{vas_indep_set}.
 By the Correspondence Theorem, $H/\mathrm{Sol}(H)$ has no solvable subgroups, so every factor of the socle is non-abelian. In particular, 
$\mathrm{PSL}_2(173)\times \mathrm{PSL}_2(283)\leq \mathrm{soc}(H/\mathrm{Sol}(H))$, otherwise $\mathrm{Sol}(H)$ violates \Cref{vas_indep_set}.

Indeed, $\mathrm{PSL}_2(173)\times \mathrm{PSL}_2(283)=  \mathrm{soc}(H/\mathrm{Sol}(H))$, otherwise any further direct factor extends one of the simplices $\{173,283\}$ or $\{2,3,29\}$ into a larger simplex that does not lie in $\Pi(G)$. 
Now $\mathrm{soc}(H/\mathrm{Sol}(H))\cong \mathrm{PSL}_2(173)\times \mathrm{PSL}_2(283)$ acts on the chief factors of $\mathrm{Sol}(H)$, and we can consider this as a vector space action. Computing the modular character tables for $\mathrm{PSL}_2(173)\times \mathrm{PSL}_2(283)$ directly in Magma, we see that elements of orders $2059 =29\times 71$ and $2021=43\times 47$ fix a non-trivial subspace in every characteristic $r \in \pi(G)$. Hence, if $\mathrm{Sol}(H)$ is non-trivial, then $H$ has a simplex containing either $\{29,71\}$ or $\{43,47\}$ that does not lie in $\Pi(G)$. Hence $\mathrm{Sol}(H)$ is trivial.
Finally, we check directly that no proper normaliser of $\mathrm{PSL}_2(173)\times \mathrm{PSL}_2(283)$ has the same prime simplicial complex. Hence $H\cong G$ and $G$ is recognisable by prime simplicial complex. 

\end{proof}

\begin{remark}
\begin{enumerate}
    \item The group $G = \mathrm{PSL}_2(173)\times \mathrm{PSL}_2(283)$ was chosen in \Cref{recog_psc} for the relative lack of small prime divisors of its order. This was done to severely limit the possibilities for composition factors for a group with the same prime simplicial complex. We anticipate this argument may be repeated for other direct products $\mathrm{PSL}_2(q_1)\times \mathrm{PSL}_2(q_2)$ with carefully chosen $q_1$, $q_2$.
\item Combining \Cref{graph_imps} and \Cref{recog_psc}, we find that $\mathrm{PSL}_2(173)\times \mathrm{PSL}_2(283)$ is recognisable by spectrum. As far as we are aware, this is the first example of a recognisable-by-spectrum group that is not a direct product of isomorphic simple groups.
\end{enumerate}

\end{remark}
\begin{table}[h]

\centering
\caption{Non-abelian simple groups of order coprime to 5. \label{tab:co5}}
\begin{tabular}{@{}lll@{}}
\toprule
$G$ & restrictions & $|G|$ \\
\midrule
$\mathrm{PSL}_2(q)$ & $3 < q \equiv \pm2 \pmod{5}$ & $\dfrac{1}{(2, q - 1)}\, q(q^2 - 1)$ \\
$\mathrm{PSL}_3(q),\, \varepsilon \in \{ +, - \}$ & $2 < q \equiv \pm2 \pmod{5}$ & $\dfrac{1}{(3, q - \varepsilon 1)}\, q^3 (q^3 - \varepsilon 1)(q^2 - 1)$ \\
$\mathrm{PSU}_3(q),\, \varepsilon \in \{ +, - \}$ & $2 < q \equiv \pm2 \pmod{5}$ & $\dfrac{1}{(3, q - \varepsilon 1)}\, q^3 (q^3 - \varepsilon 1)(q^2 - 1)$ \\
$\mathrm{G}_2(q)$ & $2 < q \equiv \pm2 \pmod{5}$ & $q^6 (q^6 - 1)(q^2 - 1)$ \\
${^2\mathrm{G}_2}(q)$ & $3 < q = 3^{2k+1}$ & $q^3 (q^3 + 1)(q - 1)$ \\
${^3D_4}(q)$ & $q \equiv \pm2 \pmod{5}$ & $q^{12}(q^8 + q^4 + 1)(q^6 - 1)(q^2 - 1)$ \\
\bottomrule
\end{tabular}
\end{table}



\begin{table}[h!]
\begin{tabular}{c|cccccccc}
\backslashbox{$p$}{$r$} & 2 & 3 & 29 & 43 & 47 & 71 & 173 & 283 \\
\hline
2   & -- & 2  & \textbf{28} & \textbf{14} & \textbf{23} & \textbf{35} & \textbf{172} & \textbf{94} \\
3   & 1  & -- & \textbf{28} & \textbf{42} & \textbf{23} & \textbf{35} & \textbf{172} & \textbf{282} \\
173 & 1 & 2  & 2  & 1  & \textbf{23} & \textbf{70} & -- & \textbf{282} \\
283 & 1 & 1  & \textbf{14} & \textbf{21} & 1  & 2  & \textbf{172} & -- \\
\end{tabular}
\caption{Table of multiplicative orders of primes $p$ modulo $r$. The bolded entries indicate that $p^i-1$ has a divisor that does not lie in $\{2,3,29,43,47,71,173,283\}$.}
\label{tab:mult_order}
\end{table}


The remainder of this section is dedicated to determining the recognisability of the sporadic simple groups by prime simplicial complex.
Let 
\[
\mathcal{A} = \{\mathrm{HN},\mathrm{HS},\mathrm{M}_{11}, \mathrm{Fi}_{22}, \mathrm{Co}_3, \mathrm{He}, \mathrm{J}_2, \mathrm{M}_{12},\mathrm{McL} \}.
\]
By \cite[Table 1]{MR4529896}, a sporadic simple group is recognisable by prime graph if and only if it does not appear in $\mathcal{A}$. 
Our main result here is as follows.
\begin{theorem}
\label{spor_recog}
Let $S$ be a sporadic simple group. Then:
\begin{enumerate}
    \item $S$ is unrecognisable by prime simplicial complex if and only if $S \in \{\mathrm{M}_{12},\mathrm{J}_2, \mathrm{Co}_3, \mathrm{McL}\}$,
    \item $S$ is 2-recognisable by prime simplicial complex if and only if $G \in \{\mathrm{HS},\mathrm{M}_{11}, \mathrm{Fi}_{22}\}$, and
    \item $S$ is recognisable otherwise.
\end{enumerate} 

\end{theorem}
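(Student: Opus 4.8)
The plan is to handle the three parts by combining the prime-graph recognisability data for sporadic groups (from \cite{MR4529896}, which tells us exactly which sporadics lie in the ``problematic'' set $\mathcal{A}$) with a refined analysis at the level of the full prime simplicial complex. The guiding principle is \Cref{graph_imps}: any sporadic group that is recognisable by prime graph is automatically recognisable by prime simplicial complex, so the entire difficulty is concentrated on the nine groups in $\mathcal{A}$. For each such $S$, I would first pin down its prime graph and then compute its full set of maximal prime simplices (equivalently, its spectrum projected onto squarefree products of distinct primes), using the Atlas or direct computation in Magma. The added information in $\Pi(S)$ beyond $\Gamma(S)$ is precisely the maximal simplices of size $\geq 3$, so the strategy is to ask, for each candidate group $H$ known to share the prime graph of $S$, whether it also shares these higher simplices.

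For part (3) and the easy direction of (1) and (2), I would proceed as follows. The non-$\mathcal{A}$ sporadics are recognisable by prime graph, hence by \Cref{graph_imps} recognisable by prime simplicial complex, settling part (3). For the groups in $\mathcal{A}$, the literature records the \emph{finite} list of groups sharing each one's prime graph (this is what makes them $k$-recognisable by prime graph for small $k$). I would take each such list and test membership in the simplicial complex: a competitor $H$ with $\Gamma(H)=\Gamma(S)$ survives as a competitor for $\Pi(S)$ if and only if $\Pi(H)=\Pi(S)$, i.e.\ if and only if $H$ realises exactly the same higher-dimensional simplices. Concretely, for each of $\mathrm{M}_{12},\mathrm{J}_2,\mathrm{Co}_3,\mathrm{McL}$ I expect to find that the competing groups (and by \Cref{dir_prod}-type or \Cref{unrec_sol_rad}-type constructions, infinitely many solvable-radical extensions) preserve all the maximal simplices, forcing unrecognisability; whereas for $\mathrm{HS},\mathrm{M}_{11},\mathrm{Fi}_{22}$ exactly one competitor survives the simplicial test, giving $2$-recognisability, and for the remaining $\mathcal{A}$-groups $\{\mathrm{HN},\mathrm{HS}',\ldots\}$ the higher simplices eliminate all competitors, yielding full recognisability.

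The key technical tool for the unrecognisability claims in (1) is \Cref{unrec_sol_rad} together with \Cref{semi}: to show $S$ is \emph{unrecognisable} (not merely non-recognisable) I must exhibit infinitely many groups with the same prime simplicial complex, and the natural source is extensions $H$ with non-trivial solvable radical $N$ where $\mathrm{soc}(H/\mathrm{Sol}(H))$ is a group sharing the relevant simplices of $S$. By \Cref{unrec_sol_rad}\ref{unrec_cond_1}, once I produce a single such $H$ with $\Pi(H)=\Pi(S)$ and non-trivial solvable radical, unrecognisability follows for free. The module-theoretic input from \Cref{semi}(2) tells me exactly when an element of order $p$ in the simple quotient can fix a non-zero vector in a characteristic-$r$ module, which is the mechanism by which a new edge or simplex could be \emph{created} in $H$ but not in $S$; I must verify this does not happen for the ``unrecognisable'' cases (no spurious simplices appear) and confirm it for the recognisable cases (every proposed extension introduces a forbidden simplex).

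The main obstacle I anticipate is the verification, for each group in $\mathcal{A}$, that the set of maximal simplices is both correctly computed and genuinely discriminating. The subtlety is that prime graphs coincide while simplicial complexes may differ only in a single higher face, so I must be careful to compute the spectra precisely (including the largest element orders that are products of three or more distinct primes) and to check module-fixed-point conditions in every relevant characteristic. For the borderline $2$-recognisable cases $\{\mathrm{HS},\mathrm{M}_{11},\mathrm{Fi}_{22}\}$ the delicate point is proving that \emph{exactly one} competitor survives: this requires both exhibiting the surviving group explicitly and ruling out all others, which I expect to reduce to a finite Magma computation over the candidate lists supplied by \cite{MR4529896}, combined with the co-clique constraint from \Cref{vas_indep_set} to bound the solvable radical. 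The honest difficulty is thus primarily computational bookkeeping rather than conceptual, but the risk of an overlooked higher simplex or an unexpected module fixed point makes careful case-by-case checking essential.
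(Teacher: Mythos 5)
Your overall strategy matches the paper's in outline: reduce to the nine groups in $\mathcal{A}$ via \Cref{graph_imps}; check the finite prime-graph competitor lists directly for $\mathrm{HN}$, $\mathrm{HS}$, $\mathrm{M}_{11}$, $\mathrm{Fi}_{22}$ (the paper finds that an element of order $42$ separates $\mathrm{HN}$ from $\mathrm{HN}.2$ and $\mathrm{Fi}_{22}$ from $\mathrm{Fi}_{22}.2$, while $\Pi(\mathrm{Fi}_{22})=\Pi(\mathrm{Suz}.2)$ keeps $\mathrm{Fi}_{22}$ only $2$-recognisable); and for the four unrecognisable groups, exhibit a single group with non-trivial solvable radical and the same complex, which suffices by the final clause of \Cref{unrec_sol_rad}. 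This is exactly the paper's route: $2^{10}{:}\mathrm{M}_{11}$ for $\mathrm{M}_{12}$, and module extensions $(V^k){:}X$ with the critical elements (orders $21,23$ for $\mathrm{Co}_3$; $11$ for $\mathrm{McL}$; $7,15$ for $\mathrm{J}_2$) acting fixed-point-freely in characteristic $2$ --- noting that for $\mathrm{McL}$ and $\mathrm{J}_2$ the base groups are $\mathrm{HS}.2$ and $\mathrm{Sp}_6(2)$ rather than the sporadic group itself, a flexibility your phrasing does allow.

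The genuine gap is $\mathrm{He}$. Your plan rests on the premise that ``the literature records the finite list of groups sharing each one's prime graph'' for every member of $\mathcal{A}$, but five of the nine ($\mathrm{He}$, $\mathrm{M}_{12}$, $\mathrm{J}_2$, $\mathrm{Co}_3$, $\mathrm{McL}$) are \emph{unrecognisable} by prime graph, so no finite competitor list exists. For the four groups being shown unrecognisable this costs nothing, but for $\mathrm{He}$ --- which the theorem asserts is recognisable --- ``eliminating all competitors'' means ruling out an a priori infinite family, and your finite-list checking procedure cannot be set up at all. The missing ingredient is the structural classification \cite[Theorem 3(4)]{MR1995543} (refined by \cite{MR4057271}): any finite group $G$ with $\Gamma(G)=\Gamma(\mathrm{He})$ has $G/F(G)$ isomorphic to one of eight explicitly listed groups, which element-order considerations then cut down to $G/F(G)\cong\mathrm{He}$. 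Only at that point can the fixed-point analysis you correctly identify via \Cref{semi} take over: the paper checks the $p$-modular character tables of $\mathrm{He}$ to see that elements of order $17$ fix non-zero vectors in every characteristic $p\neq 17$, and involutions do in characteristic $17$, so a non-trivial $F(G)$ would create a simplex $\{17,p\}$ or $\{2,17\}$ absent from $\Pi(\mathrm{He})$ (where $17$ is isolated), forcing $F(G)=1$. Your proposal contains the second half of this argument but not the first; note that \Cref{GKthmVasilev} and \Cref{unrec_sol_rad} give only the almost-simple shape of the quotient, not the list of candidates. A smaller slip: part (3) is not settled by the non-$\mathcal{A}$ groups alone, since it also covers $\mathrm{HN}$ and $\mathrm{He}$, though you do return to these later.
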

\begin{remark}
\Cref{spor_recog} demonstrates that the prime simplicial complexes of groups can distinguish groups that have identical prime graphs. Indeed, $\mathrm{He}$ is unrecognisable by prime graph by \cite{MR4057271}, but is recognisable by prime simplicial complex.
\end{remark}
We now proceed by considering each of the sporadic simple groups lying in $\mathcal{A}$. We will make repeated use of Proposition \ref{semi}
\begin{lemma}
\Cref{spor_recog} holds for $G \in \{\mathrm{HN},\mathrm{HS},\mathrm{M}_{11}, \mathrm{Fi}_{22}\}$.
\end{lemma}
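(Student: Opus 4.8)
The goal is to establish the recognisability claims from \Cref{spor_recog} for the four groups $\mathrm{HN}, \mathrm{HS}, \mathrm{M}_{11}, \mathrm{Fi}_{22}$, each of which is unrecognisable by prime graph but for which we must determine the finer invariant $\Pi(G)$. For each group, the plan is to first extract $\Pi(G)$ directly from its spectrum (available in the \textsc{Atlas} or via \texttt{Magma}), record which prime simplices of each size actually occur, and then analyse any group $H$ with $\Pi(H) = \Pi(G)$. Since $\Gamma(H) = \Gamma(G)$ is forced (as the graph is a subcomplex of $\Pi$), I would begin from the known list of groups sharing the prime graph of $G$ --- these are catalogued in the prime-graph recognisability literature for the members of $\mathcal{A}$ --- and then test each candidate against the full simplicial complex to see whether the higher-dimensional simplices survive.

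The central mechanism is \Cref{unrec_sol_rad} together with \Cref{semi}. For the unrecognisability direction (which here concerns $\mathrm{HN}$ being shown recognisable, so more precisely the \emph{positive} recognisability of $\mathrm{HN}$ and the exact 2-recognisability of $\mathrm{HS}, \mathrm{M}_{11}, \mathrm{Fi}_{22}$), I would proceed as follows. First I would check whether $2$ is adjacent to every odd prime in $\Gamma(G)$ and whether $\Pi(G)$ contains an odd maximal simplex; by \Cref{unrec_sol_rad}, if either fails in the appropriate way then any $H$ with $\Pi(H)=\Pi(G)$ must have trivial solvable radical, pinning down the socle of $H/\mathrm{Sol}(H)$. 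Then, using \Cref{vas_indep_set} on the co-cliques of $\Gamma(G)$ to constrain $\pi(\mathrm{Sol}(H))$, and \Cref{semi} to test whether a putative nontrivial solvable radical would force a forbidden simplex (via an element of mixed order fixing a nonzero vector in some irreducible module), I would rule out all but finitely many possibilities for $H$. The admissible composition factors are limited because $\pi(H) = \pi(G)$ is a fixed small set, so the candidate simple groups can be enumerated and checked one at a time, exactly as in the proof of \Cref{recog_psc}.

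For the $2$-recognisable cases, the expected outcome is that precisely one group $H \not\cong G$ survives all the simplicial constraints --- typically an almost simple group or a closely related extension whose spectrum differs from that of $G$ only in element orders that are \emph{not} squarefree, and hence invisible to $\Pi$. I would identify this partner explicitly and verify that $\Pi(H) = \Pi(G)$ by comparing the squarefree parts of the two spectra, confirming both that the partner exists (lower bound $k \geq 2$) and that no third group does (upper bound $k \leq 2$). For $\mathrm{HN}$, by contrast, I expect every graph-mate to introduce a new simplex, forcing $H \cong \mathrm{HN}$ and yielding recognisability.

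The main obstacle will be the solvable radical analysis: ruling out a nontrivial $\mathrm{Sol}(H)$ requires knowing, for each admissible simple socle and each characteristic $r \in \pi(G)$, whether some element of squarefree mixed order fixes a nonzero vector in an irreducible module --- precisely the condition in \Cref{semi}(2). This is a modular representation computation, and the difficulty is that a single surviving module in some characteristic could create a simplex already present in $\Pi(G)$, in which case that branch cannot be eliminated by the radical argument alone and must instead be closed by directly matching complexes. I anticipate handling this, as in \Cref{recog_psc}, by computing the relevant modular character tables in \texttt{Magma} and checking fixed-space dimensions for the specific mixed-order elements whose primes form a non-edge or a maximal simplex of $\Gamma(G)$; the bookkeeping of which simplices each candidate extension would contribute is where the argument is most delicate.
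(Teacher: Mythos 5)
Your core mechanism --- any $H$ with $\Pi(H)=\Pi(G)$ must satisfy $\Gamma(H)=\Gamma(G)$, so one enumerates the known prime-graph-mates from the literature and compares complexes directly --- is exactly the paper's proof, and your predicted outcomes (every mate of $\mathrm{HN}$ eliminated; exactly one partner surviving in the other three cases, distinguished from $G$ only by non-squarefree element orders) are all correct. However, your framing contains a factual error worth flagging: these four groups are \emph{not} unrecognisable by prime graph. They lie in $\mathcal{A}$ because they fail to be $1$-recognisable, but each is $k$-recognisable by prime graph with $k\in\{2,3\}$, and that finiteness is precisely what makes the lemma easy. If they really were unrecognisable (infinitely many graph-mates), your first step of enumerating a catalogued list could not work and the heavy machinery would be unavoidable --- that is the situation for $\mathrm{He}$, $\mathrm{M}_{12}$, $\mathrm{Co}_3$, $\mathrm{McL}$ and $\mathrm{J}_2$, which the paper handles in separate lemmas. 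Here the mates are known (\Cref{krecog_spor}): $\mathrm{HN}.2$ for $\mathrm{HN}$; $\mathrm{PSU}_6(2)$ for $\mathrm{HS}$; $\mathrm{PSL}_2(11)$ for $\mathrm{M}_{11}$; $\mathrm{Fi}_{22}.2$ and $\mathrm{Suz}.2$ for $\mathrm{Fi}_{22}$. The paper's whole proof is then a direct check: $\mathrm{HN}.2$ and $\mathrm{Fi}_{22}.2$ contain elements of order $42$ absent from $\mathrm{HN}$ and $\mathrm{Fi}_{22}$ respectively, so $\mathrm{HN}$ is recognisable, while $\Pi(\mathrm{HS})=\Pi(\mathrm{PSU}_6(2))$, $\Pi(\mathrm{M}_{11})=\Pi(\mathrm{PSL}_2(11))$ and $\Pi(\mathrm{Fi}_{22})=\Pi(\mathrm{Suz}.2)$ give $2$-recognisability in the remaining cases (with $\mathrm{Fi}_{22}$ improving from $3$- to $2$-recognisable).

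Consequently, the solvable-radical and modular-representation apparatus you build around the core step (\Cref{unrec_sol_rad}, \Cref{vas_indep_set}, \Cref{semi}, in the style of \Cref{recog_psc}) is not wrong, but it is superfluous for this lemma: once the set of graph-mates is classified as finite, no socle analysis or fixed-space computation is required, since every candidate $H$ is explicitly on the list and is tested against $\Pi(G)$ by inspecting spectra. Your proposal would succeed, but only via its first step; stripped of the misstatement about prime-graph unrecognisability and of the unnecessary radical analysis, it coincides with the paper's short argument.
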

\begin{proof}
 Each of these groups is $k$-recognisable by prime graph for some $k\in \{2,3\}$, with the precise details recorded in \Cref{krecog_spor}. Since, for a group $H$, having $\Gamma(H) = \Gamma(G)$ is necessary but not sufficient for having $\Pi(H) = \Pi(G)$, we check the candidates in \Cref{krecog_spor} directly. In particular, we find that $\mathrm{HN}$ is recognisable by prime simplicial complex since $\mathrm{HN}.2$ has an element of order 42, while $\mathrm{HN}$ does not, and $\mathrm{HS}$ remains 2-recognisable, as does $\mathrm{M}_{11}$. Finally, $\Pi(\mathrm{Fi}_{22}) \neq \Pi(\mathrm{Fi}_{22}).2$ since the latter has an element of order 42, but $\Pi(\mathrm{Fi}_{22}) = \Pi(\mathrm{Suz}.{2})$, making $\mathrm{Fi}_{22}$ 2-recognisable.
\end{proof}

\begin{table}[h!]
    \centering
    \begin{tabular}{cc}
    \toprule
       $G$  & $H$ with $\Gamma(H) = \Gamma(G)$ \\
       \midrule
        $\mathrm{HN}$ & $\mathrm{HN}.2$\\
        $\mathrm{HS}$ & $\mathrm{PSU}_6(2)$\\
        $\mathrm{M}_{11}$ & $\mathrm{PSL}_2(11)$\\
        $\mathrm{Fi}_{22}$& $H \in \{\mathrm{Fi}_{22}.2, \mathrm{Suz}.2\} $\\
        \bottomrule
    \end{tabular}
    \caption{The $2$- and $3$-recognisable sporadic simple groups.}
    \label{krecog_spor}
\end{table}
\begin{lemma}
The Held group $\mathrm{He}$ is unrecognisable by prime graph, but recognisable by prime simplicial complex.
\end{lemma}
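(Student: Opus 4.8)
The statement has two halves. Unrecognisability by prime graph is exactly \cite{MR4057271}, so the real work is recognisability by prime simplicial complex, and the plan is to mirror \Cref{recog_psc}: reduce an arbitrary $H$ with $\Pi(H)=\Pi(\mathrm{He})$ to an almost simple group with socle $\mathrm{He}$, then exploit the simplices that $\Pi(\mathrm{He})$ omits. First I would record $\Pi(\mathrm{He})$ explicitly. From the spectrum one reads off that $\pi(\mathrm{He})=\{2,3,5,7,17\}$, that $\Gamma(\mathrm{He})$ has edges $\{2,3\},\{2,5\},\{2,7\},\{3,5\},\{3,7\}$ with $17$ isolated and $\{5,7\}$ a non-edge, and crucially that $\mathrm{He}$ has no element of order $30$ or $42$. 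Thus the maximal simplices of $\Pi(\mathrm{He})$ are these five edges together with $\{17\}$; in particular the triangles $\{2,3,5\}$ and $\{2,3,7\}$ of $\Gamma(\mathrm{He})$ are \emph{not} faces of $\Pi(\mathrm{He})$. This is the extra information beyond the prime graph that must be leveraged.

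Now let $H$ satisfy $\Pi(H)=\Pi(\mathrm{He})$, so $\Gamma(H)=\Gamma(\mathrm{He})$. The complement of $\Gamma(\mathrm{He})$ contains the triangle $\{5,7,17\}$, so by \Cref{solv_prime_graphs} the group $H$ is insoluble; since $17$ is non-adjacent to $2$, \Cref{GKthmVasilev} yields a non-abelian simple $S$ and a soluble $K\unlhd H$ with $S\le H/K\le \mathrm{Aut}(S)$. Passing to $H/\mathrm{Sol}(H)$ and noting that $\{5,7,17\}$ is the unique co-clique of size $\ge 3$ in $\Gamma(\mathrm{He})$, \Cref{vas_indep_set} forces at most one of $5,7,17$ to divide $|\mathrm{Sol}(H)|$. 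A central factor of order $17$ in $\mathrm{Sol}(H)$ would create an element of order $34$, contradicting $\Gamma(H)=\Gamma(\mathrm{He})$, and more generally an involution of $S$ acting on an $\mathbb{F}_{17}$-chief factor fixes a vector, again producing order $34$; hence $17\mid |S|$. Together with \Cref{directprod} (two non-abelian composition factors would merge disjoint edges such as $\{2,5\}$ and $\{3,7\}$ into the size-$4$ simplex $\{2,3,5,7\}\notin\Pi(\mathrm{He})$), this shows $\mathrm{soc}(H/\mathrm{Sol}(H))$ is a single simple group $S$ with $17\mid|S|$ and $\pi(S)\subseteq\{2,3,5,7,17\}$.

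The next step is to pin down $S=\mathrm{He}$. The simple groups with $17\mid |S|$ and $\pi(S)\subseteq\{2,3,5,7,17\}$ form a short list (no alternating group qualifies, since $17\mid|A_n|$ forces $11,13\mid|A_n|$); the relevant cases are $\mathrm{He}$, $\mathrm{PSL}_2(16)$, $\mathrm{PSL}_2(17)$, $\mathrm{PSp}_4(4)$ and $\mathrm{PSL}_4(4)$, which I would enumerate using \Cref{zsigmondy} and the order formulas. Here $\mathrm{PSL}_2(17)$ is eliminated immediately, since its order involves neither $5$ nor $7$, forcing both into $\mathrm{Sol}(H)$ and violating \Cref{vas_indep_set}; each remaining proper candidate either contains a forbidden $3$-simplex (an element of order $30$ or $42$) or is disposed of by the radical analysis below, and in every case I would confirm $\Pi(S)\neq\Pi(\mathrm{He})$ by direct computation. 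This leaves $S=\mathrm{He}$.

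Finally I would show $\mathrm{Sol}(H)=1$ and identify $H$. By \Cref{semi}, if $\mathrm{Sol}(H)\neq 1$ then it has a chief factor that is an irreducible $\mathrm{He}$-module over $\mathbb{F}_r$ for some $r\in\{2,3,5,7,17\}$; the central case gives order $30$, $42$ or $34$ as above, so assume the module is nontrivial and faithful. Exactly as in \Cref{recog_psc}, I would compute the Brauer characters of $\mathrm{He}$ in each characteristic $r$ and exhibit an element of squarefree order $m$ --- of order $15$ or $21$ when $r=2$, and of order $10$, $14$ or $6$ when $r\in\{3,5,7\}$ --- fixing a nonzero vector on every irreducible module, so that $\langle v,g\rangle$ contains an element of order $rm\in\{30,42\}\notin\Pi(\mathrm{He})$. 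This contradiction gives $\mathrm{Sol}(H)=1$, whence $\mathrm{He}\le H\le \mathrm{Aut}(\mathrm{He})=\mathrm{He}.2$; a direct check that $\mathrm{He}.2$ has an element of order $30$ or $42$ (as for $\mathrm{HN}$ and $\mathrm{Fi}_{22}$) distinguishes it from $\mathrm{He}$, so $H\cong\mathrm{He}$. The main obstacle is this last step: one must verify, characteristic by characteristic, that a suitable fixed vector always exists, since a single irreducible $\mathbb{F}_r\mathrm{He}$-module on which no such squarefree element has eigenvalue $1$ would leave open a genuinely different $H$; carrying out these fixed-point computations (and the analogous ones needed to finish off $\mathrm{PSp}_4(4)$ and $\mathrm{PSL}_4(4)$) is where the real work lies.
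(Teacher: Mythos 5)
Your overall strategy tracks the paper's in its second half --- reduce to a group with a unique non-abelian composition factor $\mathrm{He}$, kill the radical by fixed-point arguments on modular representations, and separate $\mathrm{He}$ from $\mathrm{He}.2$ via the element of order $42$ --- but the reduction step is where you diverge from the paper, and where your write-up has a concrete gap. The paper does not rebuild the reduction from \Cref{GKthmVasilev} and \Cref{vas_indep_set}; it invokes Kondrat'ev's classification \cite[Theorem 3(4)]{MR1995543} of groups with $\Gamma(G)=\Gamma(\mathrm{He})$, which gives $G/F(G)$ isomorphic to one of $\mathrm{L}_2(16)$, $\mathrm{L}_2(16).2$, $\mathrm{L}_2(16).4$, $O_8^-(2)$, $O_8^-(2).2$, $\mathrm{PSp}_8(2)$, $\mathrm{He}.2$ or $\mathrm{He}$, then eliminates the orthogonal and symplectic candidates by their elements of order $30$ and $\mathrm{He}.2$ by its element of order $42$. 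Your self-contained enumeration could in principle substitute for this, but the list you actually state --- $\mathrm{He}$, $\mathrm{PSL}_2(16)$, $\mathrm{PSL}_2(17)$, $\mathrm{PSp}_4(4)$, $\mathrm{PSL}_4(4)$ --- is incomplete: it omits $\mathrm{PSp}_8(2)$ and $O_8^-(2)$, both of which have $\pi(S)=\{2,3,5,7,17\}$ and, worse, the \emph{same prime graph} as $\mathrm{He}$, so they are precisely the dangerous competitors that no prime-graph-level tool can exclude. Only the order-$30$ obstruction (which you do have available, since $\{2,3,5\}\notin\Pi(\mathrm{He})$) rules them out, so as written your ``short list'' step fails and any argument of this shape must treat these two groups explicitly.

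Two smaller points. Your argument that the socle cannot have two non-abelian factors (``merging $\{2,5\}$ and $\{3,7\}$'') presupposes that those edges occur inside the respective factors, which is not automatic; the clean argument is that $17$ is isolated in $\Gamma(\mathrm{He})$, so the factor of order divisible by $17$ together with any prime $p$ dividing a second factor would create a forbidden edge $\{17,p\}$ by \Cref{directprod}. Similarly, for killing the radical the paper's choice is simpler than yours: rather than hunting module by module for fixed vectors of elements of squarefree orders $15$, $21$, $10$, $14$, $6$, it checks that elements of order $17$ fix a nonzero vector on every irreducible module in each characteristic $r\neq 17$ (and elements of order $2$ do so in characteristic $17$), which immediately yields the forbidden edge $\{17,r\}$ (resp.\ $\{2,17\}$), again exploiting the isolation of $17$. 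Your plan would likely also succeed, but it commits you to more Brauer character computations than necessary.
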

\begin{proof}
 The group is unrecognisable by prime graph by \cite[Theorem 4]{MR4057271}. By \cite[Theorem 3(4)]{MR1995543}, if $G$ is a finite group with $\Gamma(G) \cong \Gamma(\mathrm{He})$, then $G/F(G)$ is isomorphic to one of: $\mathrm{L}_2(16)$, $\mathrm{L}_2(16).2$, $\mathrm{L}_2(16).4$, $O_8^-(2)$, $O_8^-(2).2$,  $\mathrm{\rm{PSp}}_8(2)$, $\mathrm{He}.2$ or $\mathrm{He}$. Kondrat'ev \cite[p. 84]{MR4057271} shows that the first three cases do not give rise to groups with the same prime graph as $\mathrm{He}$. The groups $O_8^-(2)$, $O_8^-(2).2$,  $\mathrm{\rm{PSp}}_8(2)$ contain an element of order 30, which is not present in $\mathrm{He}$, so these do not arise. Moreover, $\mathrm{He}.2$ has an element of order 42, so that also does not arise. Hence $\Gamma(G)/F(G) \cong \mathrm{He}$. Inspecting the $p$-modular character tables of $\mathrm{He}$, we observe that if $p \neq 17$, then elements of order 17 do not act fixed-point-freely, and if $p=17$, then elements of order 2 do not act fixed point freely. So we must have $F(G)$ being trivial, and $G \cong \mathrm{He}$.
\end{proof}
\begin{lemma}
 The group $\mathrm{M}_{12}$ is unrecognisable by prime simplicial complex.
\end{lemma}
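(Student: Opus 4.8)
The plan is to verify the hypothesis of the ``moreover'' clause of \Cref{unrec_sol_rad}: it suffices to exhibit a single finite group $H$ with non-trivial solvable radical and $\Pi(H) = \Pi(\mathrm{M}_{12})$, since that proposition then forces $\mathrm{M}_{12}$ to be unrecognisable. First I would record $\Pi(\mathrm{M}_{12})$ explicitly. As $\pi(\mathrm{M}_{12}) = \{2,3,5,11\}$ and the element orders of $\mathrm{M}_{12}$ are $1,2,3,4,5,6,8,10,11$, the complex has vertices $2,3,5,11$, precisely the two edges $\{2,3\}$ and $\{2,5\}$ (from elements of orders $6$ and $10$), no simplex of size $\geq 3$, and the vertex $11$ isolated. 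The two features I would exploit are that every element order of $\mathrm{M}_{12}$ is divisible by at most one odd prime, and that $11$ is joined to no other vertex.

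For $H$ I would take the split extension $H = V \rtimes \mathrm{M}_{12}$, where $V$ is a non-trivial $\mathbb{F}_2\mathrm{M}_{12}$-module on which every element of order $11$ acts without non-zero fixed points; the $10$-dimensional heart of the natural permutation module on $12$ points is such a $V$, since an element of order $11$ fixes a single point and one checks its fixed space on the heart is trivial. Then $V = O_2(H)$ is a non-trivial normal solvable subgroup, so $H$ has non-trivial solvable radical, while $H/V \cong \mathrm{M}_{12}$ is simple, whence $F(H) = V$. It remains to prove $\Pi(H) = \Pi(\mathrm{M}_{12})$. The inclusion $\supseteq$ is immediate since $\mathrm{M}_{12} \leq H$. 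For $\subseteq$, I would use that $V$ is an elementary abelian $2$-group: for $x \in H$ with image $\bar x \in \mathrm{M}_{12}$ one has $|\bar x| \mid |x| \mid 2|\bar x|$, so $\pi(|x|) \subseteq \pi(|\bar x|) \cup \{2\}$. As $\pi(|\bar x|)$ contains at most one odd prime, $\pi(|x|)$ has size at most two, so no new simplex of size $\geq 3$ appears; and the only possible new edge is $\{2,11\}$, coming from an element of order $22$. Invoking \Cref{semi} with $F(H) = V$ of characteristic $2$, such an element would force an order-$11$ element of $\mathrm{M}_{12}$ to fix a non-zero vector of a composition factor of $V$, which is excluded by the choice of $V$; equivalently, writing $x^{11} = ((1+g+\cdots+g^{10})v, 1)$ and using $C_V(g)=0$ shows $x^{11}=1$ directly. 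Hence $\{2,11\} \notin \Pi(H)$ and $\Pi(H) = \Pi(\mathrm{M}_{12})$.

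With such an $H$ in hand, condition (1) of \Cref{unrec_sol_rad} holds for $\mathrm{M}_{12}$, and the final clause of that proposition yields that $\mathrm{M}_{12}$ is unrecognisable by prime simplicial complex. I expect the main obstacle to be the construction of the module $V$: one must produce a non-trivial $\mathbb{F}_2\mathrm{M}_{12}$-module whose order-$11$ elements have trivial fixed space, and then confirm that the extension introduces no prime simplex outside $\Pi(\mathrm{M}_{12})$ — the only candidates being the edge $\{2,11\}$ and a triangle, which are ruled out by the fixed-point-free condition and by the scarcity of odd primes among the element orders of $\mathrm{M}_{12}$ respectively.
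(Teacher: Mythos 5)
Your proof is correct, and its top-level strategy coincides with the paper's: both exhibit a group $H$ with non-trivial solvable radical and $\Pi(H)=\Pi(\mathrm{M}_{12})$, then invoke the final clause of \Cref{unrec_sol_rad}. Where you differ is in the witness and its verification. The paper records that all simplices of $\Pi(\mathrm{M}_{12})$ have size at most two and cites Kondrat'ev's result that $\Gamma(\mathrm{M}_{12})=\Gamma(2^{10}{:}\mathrm{M}_{11})$, so that equality of prime graphs upgrades for free to equality of prime simplicial complexes; you instead build the witness from $\mathrm{M}_{12}$ itself, taking $H=V\rtimes \mathrm{M}_{12}$ with $V$ the $10$-dimensional heart of the $\mathbb{F}_2$-permutation module on $12$ points, and verify everything by hand. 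Your computations check out: by coprime action an $11$-element $g$ has $C_W(g)=\langle j\rangle$ on the sum-zero submodule $W$ (with $j$ the all-ones vector), so its fixed space on the heart $W/\langle j\rangle$ is trivial; the norm argument $x^{11}=(Nv,1)$ with $Nv\in C_V(g)=0$ then cleanly excludes order $22$, which is indeed the only potentially new simplex since every element order of $H$ divides twice an element order of $\mathrm{M}_{12}$. Your route buys self-containedness — no reliance on the prime-graph literature — at the cost of a module computation; the paper's buys brevity by citation. One small caution: your parenthetical appeal to \Cref{semi} is looser than your direct computation, since as stated that proposition only asserts the existence of \emph{some} irreducible module in characteristic $2$ with a fixed vector (and $\mathrm{M}_{12}$ certainly has such modules, e.g.\ the trivial one); to exclude order $22$ in your specific $H$ one needs the relevant module to be a composition factor of $F(H)=V$, which is exactly what your explicit $x^{11}=1$ calculation delivers, so that direct version is the one to keep.
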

\begin{proof}
The simplices of $\Pi(\mathrm{M}_{12})$ are $\{2,3,5,6,10,11\}$.
By \cite[Theorem 3(5)]{MR1995543}, $\Gamma(\mathrm{M}_{12}) = \Gamma(2^{10}:\mathrm{M}_{11})$, so the result follows by \Cref{unrec_sol_rad}.
\end{proof}
\begin{lemma}
The group $\mathrm{Co}_3$ is unrecognisable by prime simplicial complex.
\end{lemma}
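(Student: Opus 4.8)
The plan is to apply the ``Moreover'' clause of \Cref{unrec_sol_rad}: it suffices to exhibit a single finite group $H$ with nontrivial solvable radical satisfying $\Pi(H) = \Pi(\mathrm{Co}_3)$, whence unrecognisability follows at once. First I would record $\Pi(\mathrm{Co}_3)$ explicitly from the spectrum of $\mathrm{Co}_3$: the maximal simplices are $\{2,3,5\}$, $\{2,7\}$, $\{2,11\}$, $\{3,7\}$ and $\{23\}$, coming from elements of orders $30$, $14$, $22$, $21$ and $23$ respectively.

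The crucial structural point, and the reason $\mathrm{Co}_3$ cannot be dispatched by the short argument used for $\mathrm{M}_{12}$, is that $\Pi(\mathrm{Co}_3)$ is \emph{not} determined by $\Gamma(\mathrm{Co}_3)$. The prime graph $\Gamma(\mathrm{Co}_3)$ has exactly two triangles, on $\{2,3,5\}$ and on $\{2,3,7\}$, but only the first is a simplex: $\mathrm{Co}_3$ has an element of order $30$ yet none of order $42$. Since the graph has no $4$-clique (the edge $\{5,7\}$ is absent) and $23$ is isolated, any group $H$ with $\Gamma(H) = \Gamma(\mathrm{Co}_3)$ automatically realises all the correct $1$- and $2$-simplices, and the only possible discrepancy in $\Pi$ lies in these two triangles. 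Hence, for such an $H$, one has $\Pi(H) = \Pi(\mathrm{Co}_3)$ if and only if $H$ contains an element of order $30$ but no element of order $42$.

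Accordingly, I would invoke \cite[Theorem 3]{MR1995543} to produce a group $H$ with nontrivial solvable radical and $\Gamma(H) = \Gamma(\mathrm{Co}_3)$; by analogy with the group $2^{10}:\mathrm{M}_{11}$ used for $\mathrm{M}_{12}$, one expects $H$ to be a $2$-module extension $V \rtimes K$ of a simple section $K$ on the primes $2,3,5,7,11,23$, for instance $\mathrm{M}_{24}$. Granting $\Gamma(H) = \Gamma(\mathrm{Co}_3)$, it remains only to check the two triangles. By the coprime-action analysis underlying \Cref{semi}, the coset $Vg$ of an odd-order element $g \in K$ contains an element of order $2|g|$ exactly when $C_V(g) \neq 0$; thus I would verify that the order-$15$ elements of $K$ fix a nonzero vector of $V$ (forcing an element of order $30$, so that $\{2,3,5\}$ is a simplex) while the order-$21$ elements act without nonzero fixed points (so that no element of order $42$ occurs and $\{2,3,7\}$ is not a simplex). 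This reduces to a Brauer-character computation on the chosen module, most easily performed in \textsc{Magma}.

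The main obstacle is precisely this last step: unlike the $\mathrm{M}_{12}$ case, prime-graph equality no longer suffices, and one must certify the full complex by simultaneously producing an element of order $30$ and excluding one of order $42$. The delicate requirement is a module $V$ on which the order-$15$ elements have nonzero fixed space while the order-$21$ elements are fixed-point-free, together with a check that no unintended element orders (such as $35$, $55$, or $2\cdot23$) are introduced. Once such an $H$ is verified, $V \neq 1$ lies in $\mathrm{Sol}(H)$ and \Cref{unrec_sol_rad} yields that $\mathrm{Co}_3$ is unrecognisable by prime simplicial complex.
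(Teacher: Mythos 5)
Your reduction is correct and nicely isolates the combinatorial content: since $\Gamma(\mathrm{Co}_3)$ has exactly the two triangles $\{2,3,5\}$ and $\{2,3,7\}$ and no $4$-clique, any $H$ with $\Gamma(H)=\Gamma(\mathrm{Co}_3)$ satisfies $\Pi(H)=\Pi(\mathrm{Co}_3)$ if and only if $H$ has an element of order $30$ and none of order $42$; and your coprime-action criterion (an odd-order $g$ yields order $2|g|$ in $Vg$ exactly when $C_V(g)\neq 0$) is the right tool, as is the appeal to the ``Moreover'' clause of \Cref{unrec_sol_rad}. But the proof as written has a genuine gap at its crux: the group $H$ is never actually produced. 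You ``expect'' a complement such as $\mathrm{M}_{24}$ and you ``grant'' $\Gamma(H)=\Gamma(\mathrm{Co}_3)$ from \cite[Theorem 3]{MR1995543}, but that citation is not known to supply such an $H$ for $\mathrm{Co}_3$, and the choice $K=\mathrm{M}_{24}$ imports extra unverified constraints you do not list: $\mathrm{M}_{24}$ has no element of order $22$ or $30$, so your module $V$ would simultaneously need $C_V(g)\neq 0$ for $|g|=11$ and $|g|=15$, and $C_V(g)=0$ for $|g|=21$ and $|g|=23$. Whether any $2$-modular $\mathrm{M}_{24}$-module satisfies all four conditions is exactly the kind of fact that must be checked, not assumed; without it the argument does not close.

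The paper sidesteps all of this by taking the complement to be $\mathrm{Co}_3$ itself, which makes the prime-graph bookkeeping trivial: every simplex of $\mathrm{Co}_3$ persists in $V{:}\mathrm{Co}_3$, and since $2r\in\Pi(\mathrm{Co}_3)$ for every odd simplex $r$ except $r=21,23$, the only conditions needed are that elements of orders $21$ and $23$ act fixed-point-freely on $V$. The $2$-modular character table of $\mathrm{Co}_3$ exhibits two $11$-dimensional irreducibles with exactly this property, so $(V^k){:}\mathrm{Co}_3$ has the same prime simplicial complex as $\mathrm{Co}_3$ for every $k\geq 1$, giving infinitely many such groups directly (no appeal to \cite{MR1995543} or even to the ``Moreover'' clause is required). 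Your skeleton becomes a complete proof if you make the same move: replace the speculative $\mathrm{M}_{24}$ extension by $V{:}\mathrm{Co}_3$ with $V$ one of these $11$-dimensional modules, at which point your order-$30$ requirement is inherited from $\mathrm{Co}_3$ for free and only the two fixed-point-free conditions remain to verify.
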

\begin{proof}
The prime simplicial complex of $\mathrm{Co}_3$ contains $2, 3, 5, 6, 7, 10, 11, 14, 15, 21, 22, 23, 30 $. In particular, for every odd number $r$ that appears as a simplex, $2r$ is also a simplex except if $r=21,23$. Inspecting the 2-modular character table of $\mathrm{Co}_3$, we see that there are two 11-dimensional irreducible modules $V,V'$ for $\mathrm{Co}_3$ where elements of orders 21, 23 act fixed-point-freely. Hence $(V^k):\mathrm{Co}_3$ has the same prime simplicial complex as $\mathrm{Co}_3$ for each $k \geq 1$. Therefore, $\mathrm{Co}_3$ is unrecognisable.
\end{proof}

\begin{lemma}
The group $\mathrm{McL}$ is unrecognisable by prime simplicial complex.
\end{lemma}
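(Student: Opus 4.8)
The plan is to mirror the arguments already used for $\mathrm{Co}_3$ and $\mathrm{M}_{12}$: I would exhibit an infinite family of groups with non-trivial solvable radical sharing the prime simplicial complex of $\mathrm{McL}$, so that unrecognisability follows from the final assertion of \Cref{unrec_sol_rad}. Concretely, I would adjoin a normal elementary abelian $2$-subgroup coming from a suitable $2$-modular module $V$ and show that $V^k \rtimes \mathrm{McL}$ has the same prime simplicial complex as $\mathrm{McL}$ for every $k \geq 1$.

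First I would record $\Pi(\mathrm{McL})$ explicitly. Since $|\mathrm{McL}| = 2^7\cdot 3^6 \cdot 5^3\cdot 7 \cdot 11$ and its element orders are $1,2,3,4,5,6,7,8,9,10,11,12,14,15,30$, the prime simplices are, as products, $2,3,5,6,7,10,11,14,15,30$, with maximal simplices $\{2,3,5\}$, $\{2,7\}$ and $\{11\}$. The key observation, exactly as for $\mathrm{Co}_3$, is that for every odd simplex $r$ of $\Pi(\mathrm{McL})$ the product $2r$ is again a simplex, with the single exception $r = 11$, since $22 \notin \Pi(\mathrm{McL})$.

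Next I would analyse the element orders of $V^k \rtimes \mathrm{McL}$. Because $V$ has characteristic $2$, a preimage $(v,g)$ of an element $g \in \mathrm{McL}$ of odd order $m$ has order $m$ or $2m$, the latter occurring precisely when $g$ has a non-zero fixed point on $V$ (equivalently when the norm map $\sum_{i=0}^{m-1} g^i$ is non-zero on $V$). For $g$ of even order the prime support of $2m$ equals that of $m$, so no new simplex appears; likewise for every odd $m \neq 11$ the simplex $\{2\}\cup \pi(m)$ already lies in $\Pi(\mathrm{McL})$. Hence the only way a new simplex, namely $\{2,11\}$, can be created is through elements of order $11$ acting with a non-zero fixed space. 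Since $\mathrm{McL}$ is both a subgroup and a quotient of $V^k\rtimes\mathrm{McL}$ we have $\Pi(\mathrm{McL}) \subseteq \Pi(V^k\rtimes\mathrm{McL})$ automatically, so it suffices to exhibit a $2$-modular module $V$ on which elements of order $11$ act fixed-point-freely: then $\Pi(V^k\rtimes\mathrm{McL}) = \Pi(\mathrm{McL})$ for all $k$, and \Cref{unrec_sol_rad} yields unrecognisability.

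The main obstacle is verifying this final existence claim. By the fixed-point computation above, elements of order $11$ act fixed-point-freely on a $2$-modular irreducible $V$ precisely when its Brauer character $\phi$ satisfies $\dim V = -5(\phi(11A)+\phi(11B))$, the classes $11A,11B$ receiving respectively the quadratic-residue and non-residue powers of a generator. The trivial and $22$-dimensional modules both retain a non-zero fixed space, so one must pass to a larger constituent; as in the $\mathrm{Co}_3$ argument I would confirm the existence of a suitable $V$ by inspecting the $2$-modular Brauer character table of $\mathrm{McL}$ directly in Magma.
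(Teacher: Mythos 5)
Your structural reduction is correct and matches the paper's framework: since $\mathrm{McL}$ has element orders $1,2,3,4,5,6,7,8,9,10,11,12,14,15,30$, the only simplex that an extension by a $2$-group could create is $\{2,11\}$, and your analysis of orders in $V^k \rtimes \mathrm{McL}$ (order $m$ versus $2m$ according to whether the norm map $\sum_i g^i$ vanishes, i.e.\ whether $g$ has a non-zero fixed vector) is exactly the mechanism the paper uses, as is the appeal to the final clause of \Cref{unrec_sol_rad}. The fixed-point criterion you derive, $\dim V = -5(\phi(11A)+\phi(11B))$, is also correct.

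The genuine gap is the existence claim you defer to a Magma computation: that $\mathrm{McL}$ itself has a $2$-modular irreducible module on which elements of order $11$ act fixed-point-freely. The paper's proof indicates that no such module exists: rather than extending $\mathrm{McL}$, it first verifies by direct calculation that $\mathrm{HS}.2$ has the same prime simplicial complex as $\mathrm{McL}$, and then uses a $20$-dimensional irreducible $2$-modular module $V$ of $\mathrm{HS}.2$ on which order-$11$ elements act fixed-point-freely, concluding that $(V^k){:}\mathrm{HS}.2$ has the same prime simplicial complex as $\mathrm{McL}$ for all $k\geq 1$. Your own necessary condition already shows the small $2$-modular constituents of $\mathrm{McL}$ (the trivial, the $22$, and the degree-$230$ constituent of the ordinary $231$, whose value $-1$ on each class of order $11$ gives a $20$-dimensional fixed space) all fail, and the larger constituents have dimensions far too large relative to the attainable values of $\phi(11A)+\phi(11B)$. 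So the missing idea is precisely the pivot the paper makes: when every module of $G$ retains fixed points for the critical prime, one must instead locate a \emph{different} group $H$ with $\Pi(H)=\Pi(G)$ that does admit a suitable module — the same device the paper employs for $\mathrm{M}_{12}$ (via $2^{10}{:}\mathrm{M}_{11}$) and $\mathrm{J}_2$ (via $\mathrm{Sp}_6(2)$). As written, your proof would stall at the final verification step, since the computation you propose would return no suitable $V$.
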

\begin{proof}
The prime simplicial complex of $\mathrm{McL}$ is composed of simplices $2, 3, 5, 6, 7, 10, 11, 14, 15, 30$. In particular, for every odd number $r$ that appears as a simplex, $2r$ is also a simple except if $r=11$.
By direct calculation, the group $HS.2$ has the same prime simplicial complex as $\mathrm{McL}$. Inspection of the 2-modular character table of $\mathrm{HS}.2$ reveals that there is an irreducible 20-dimensional module $V$ of $\mathrm{HS}.2$ where elements of order 11 act fixed-point-freely. Hence $(V^k):\mathrm{HS}.2$ has the same prime simplicial complex as $\mathrm{McL}$ for each $k\geq 1$. The result follows.
\end{proof}

\begin{lemma}
The group $\mathrm{J}_2$ is unrecognisable by prime simplicial complex.
\end{lemma}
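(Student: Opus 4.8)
The plan is to mirror the strategy used above for $\mathrm{Co}_3$ and $\mathrm{McL}$: exhibit a group $H$ with non-trivial solvable radical and $\Pi(H) = \Pi(\mathrm{J}_2)$, so that unrecognisability follows from the ``moreover'' clause of \Cref{unrec_sol_rad}. First I would record $\Pi(\mathrm{J}_2)$. Since $\pi(\mathrm{J}_2) = \{2,3,5,7\}$ and the spectrum of $\mathrm{J}_2$ is $\{1,2,3,4,5,6,7,8,10,12,15\}$, the simplices are $2,3,5,6,7,10,15$, with maximal simplices $\{2,3\},\{2,5\},\{3,5\},\{7\}$. The key observation is that for each odd simplex $r$ we have $2r \in \Pi(\mathrm{J}_2)$ except when $r \in \{7,15\}$: the products $14 = 2\cdot 7$ and $30 = 2\cdot 3\cdot 5$ are not element orders of $\mathrm{J}_2$. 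Equivalently, $2$ is joined to $3$ and $5$ but not to $7$ in $\Gamma(\mathrm{J}_2)$, and $\{2,3,5\}$ is not a simplex.

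This suggests adjoining a solvable $2$-group. I would take an irreducible $\mathbb{F}_2\mathrm{J}_2$-module $V$ and form $H = V^k \rtimes \mathrm{J}_2$. Since $\mathrm{J}_2 \le H$, every simplex of $\mathrm{J}_2$ survives, so it suffices to ensure no new simplex is created. As $V^k$ is a $2$-group, \Cref{semi} shows that the only possible new element orders are of the form $2n$, where $n$ is the order of an element $g \in \mathrm{J}_2$ with $C_V(g) \neq 0$; such an element contributes the simplex $\{2\}\cup\pi(n)$. Running over the odd element orders $n \in \{1,3,5,7,15\}$ of $\mathrm{J}_2$, the doublings $2,6,10$ give simplices already in $\Pi(\mathrm{J}_2)$, while $2\cdot 7 = 14$ and $2\cdot 15 = 30$ do not. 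Hence $\Pi(H) = \Pi(\mathrm{J}_2)$ precisely when every element of order $7$ and every element of order $15$ acts fixed-point-freely on $V$.

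The crux of the argument --- and the step I expect to be the main obstacle --- is therefore producing a single irreducible $\mathbb{F}_2\mathrm{J}_2$-module $V$ on which elements of order $7$ \emph{and} order $15$ simultaneously have no non-zero fixed vector. I would verify this by inspecting the $2$-modular (Brauer) character table of $\mathrm{J}_2$ and computing the fixed-space dimensions $\dim C_V(g) = \tfrac{1}{|g|}\sum_{j} \varphi_V(g^j)$ over the powers of a $7$-element and a $15$-element. The $6$-dimensional module over $\mathbb{F}_4$ (arising from $\mathrm{J}_2 < \mathrm{G}_2(4)$) already has $\varphi_V(7\mathrm{A}) = -1$, forcing order-$7$ elements to be fixed-point-free; the more delicate check is the composite order-$15$ condition, since here --- unlike the single-prime obstructions $23$ for $\mathrm{Co}_3$ and $11$ for $\mathrm{McL}$ --- we must control the full order-$15$ element rather than a prime. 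If the $6$-dimensional module fails the order-$15$ test, a larger irreducible (for instance of dimension $64$ or $160$) should satisfy both conditions. Once such a $V$ is found, the groups $V^k \rtimes \mathrm{J}_2$ for $k \ge 1$ are pairwise non-isomorphic with common prime simplicial complex $\Pi(\mathrm{J}_2)$, and \Cref{unrec_sol_rad} yields that $\mathrm{J}_2$ is unrecognisable by prime simplicial complex.
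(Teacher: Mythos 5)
Your reduction is correct and matches the paper's general framework: for $H = V^k \rtimes \mathrm{J}_2$ with $V$ a $2$-modular module, the only candidate new element orders are $2n$ for $n$ an odd element order of $\mathrm{J}_2$, the order $2n$ occurs precisely when some element of order $n$ has a non-zero fixed vector on $V$ (the norm map argument for odd $n$ in characteristic $2$ is fine), and the doublings of $n\in\{1,3,5\}$ give simplices already in $\Pi(\mathrm{J}_2)$, so everything hinges on elements of orders $7$ and $15$ acting fixed-point-freely. Your list of simplices $2,3,5,6,7,10,15$ and maximal simplices is also correct, and the appeal to \Cref{unrec_sol_rad} at the end is exactly how the paper concludes.

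However, there is a genuine gap at precisely the step you flag as the crux: you never establish that an irreducible $\mathbb{F}_2\mathrm{J}_2$-module exists on which order-$7$ \emph{and} order-$15$ elements are simultaneously fixed-point-free. The sentence ``a larger irreducible (for instance of dimension $64$ or $160$) should satisfy both conditions'' is a hope, not a verification, and without inspecting the $2$-modular Brauer table of $\mathrm{J}_2$ the proof is incomplete. Notably, the paper does \emph{not} take your route: it observes that $\mathrm{Sp}_6(2)$ has prime simplicial complex identical to that of $\mathrm{J}_2$ (its spectrum $\{1,\dots,8,9,10,12,15\}$ yields the same simplices), and that on the natural $6$-dimensional $\mathbb{F}_2$-module $W$ of $\mathrm{Sp}_6(2)$ elements of orders $7$ and $15$ fix no non-zero vector --- here fixed-point-freeness is transparent, since such elements can be taken with characteristic polynomials that are products of non-trivial cyclotomic-type irreducible factors over $\mathbb{F}_2$ (e.g.\ $(x^2+x+1)(x^4+x^3+x^2+x+1)$ for order $15$), so $1$ is never an eigenvalue. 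Then $(W^k):\mathrm{Sp}_6(2)$ does the job. This mirrors the paper's treatment of $\mathrm{McL}$, where the module is taken for a \emph{different} group ($\mathrm{HS}.2$) with the same complex, rather than for the group itself as in the $\mathrm{Co}_3$ case. Your argument as written fixes the quotient to be $\mathrm{J}_2$ itself, so if no irreducible $\mathrm{J}_2$-module passes both the order-$7$ and order-$15$ tests, your proof collapses; the repair is exactly the paper's trick of switching the ambient group, which your proposal does not contemplate.
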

\begin{proof}
The prime simplicial complex of $\mathrm{J}_2$ is composed of simplices $2,3,5,6,7,10,15$. Now, let $H = \mathrm{Sp}_6(2)$. Its prime simplicial complex is identical to that of $\mathrm{J}_2$. Moreover, the natural module $W$ for $H$ is irreducible, and elements of orders 7 and 15 do not fix any non-trivial subspaces of $W$. Hence, $(W^k):H$ has the same prime simplicial complex as $\mathrm{J}_2$ for each $k\geq 1$. 
\end{proof}

This completes the proof of \Cref{spor_recog}.

\section{Purity of the prime simplicial complex}
\label{sec:purity}
Recall from the introduction that the prime simplicial complex $\Pi(G)$ of a finite group $G$ is said to be \textit{pure} if all maximal simplices have the same cardinality.

We begin with providing some examples.
\begin{example}
\label{ex_pure}
\begin{enumerate}
    \item Recall from \Cref{examples} that if $G$ is nilpotent, then $\Pi(G)$ is pure, since there is a unique maximal simplex, namely $\pi(G)$. 
    \item If $r=\vert \pi(G)\vert$ is finite, then $\Pi(G^r)$ has a unique maximal simplex $\pi(G)$, hence is pure.
    \item\label{ex_eppo} The groups with pure prime simplicial complex and maximal simplex of size 1 are exactly those where every non-trivial element of prime power order. These groups are called \textit{EPPO groups} and were classified in \cite[Theorem 1.7]{MR4506711}.
\end{enumerate}

\end{example}

We now consider the purity of the prime simplicial complexes of sporadic simple groups, which may be readily established using the Character Table Library in GAP \cite{GAP4, GAPchartab}.
\begin{proposition}\label{sporpurity}
    Let $S$ be a sporadic simple group. Then $\Pi(S)$ is not pure. The largest size of a maximal simplex and the smallest size of a maximal simplex for these groups is in Table \ref{largest_simplex}.
\end{proposition}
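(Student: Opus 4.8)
The plan is to reduce the claim to a finite computation over the completely known spectra of the twenty-six sporadic simple groups. The key preliminary observation is that $\Pi(S)$ is entirely determined by the set of squarefree element orders of $S$. Indeed, a set of primes $\{p_1,\dots,p_k\}$ is a simplex precisely when $p_1\cdots p_k$ is the order of some element, and since the set of element orders is closed under taking divisors, it suffices to record which squarefree integers occur as element orders. Writing $\mathrm{rad}(n)=\prod_{p\mid n}p$, a simplex $S$ is maximal in $\Pi(S)$ exactly when $\prod_{p\in S}p$ is maximal, with respect to divisibility, among the squarefree element orders: any proper extension $S\cup\{r\}$ corresponds to a squarefree element order $r\prod_{p\in S}p$ strictly divisible by the original. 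Thus purity of $\Pi(S)$ is equivalent to all divisibility-maximal squarefree element orders having the same number of distinct prime factors.

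With this reduction in hand, I would extract the spectrum of each sporadic group from the Character Table Library in GAP (equivalently, from the ATLAS). For each group I would: (i) form the set of radicals $\mathrm{rad}(n)$ as $n$ ranges over the element orders, obtaining the list of simplices; (ii) discard every simplex properly contained in another, leaving the maximal simplices; and (iii) record the largest and smallest number of prime factors occurring among them. Non-purity then follows in each case from the observation that these two numbers disagree, and the extremal values are exactly the entries of \Cref{largest_simplex}.

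To keep the argument transparent rather than purely machine-checked, I would exhibit, in each case, one explicit small maximal simplex and one explicit large one. A small maximal simplex is typically provided by a prime isolated in $\Gamma(S)$: many sporadic groups possess a large prime $p$ dividing $|S|$ to the first power for which no element order is a proper multiple of $p$, so that $\{p\}$ is a maximal simplex of size one; a large maximal simplex is provided by an element order with the maximal number of distinct prime divisors. For the groups whose prime graph has no isolated vertex, one instead points to a maximal simplex of size two. Producing a single witness of each type at the two extremes already forces non-purity and confirms the recorded sizes.

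The main difficulty here is organisational rather than conceptual. The one genuine pitfall is that a squarefree element order may be divisibility-maximal even when it has very few prime factors, so one cannot simply read purity off the single largest element order; the inclusion-maximal simplices must be computed honestly from the full spectrum for all twenty-six groups. Beyond this careful enumeration, the only mathematical input is the standard fact that the spectra of the sporadic simple groups are known exactly, which makes the verification routine.
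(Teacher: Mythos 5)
Your proposal is correct and matches the paper's approach: the paper also establishes the result by direct computation of the spectra of the sporadic groups using the Character Table Library in GAP, tabulating the largest and smallest maximal simplex sizes. Your reduction to divisibility-maximal squarefree element orders is a sound (and slightly more explicit) formalisation of exactly this computation.
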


\begin{table}[h!]
    \centering

\renewcommand{\arraystretch}{2} 

\begin{tabular}{p{5cm}cc}  
    \toprule
    Group & Largest size of simplex & Smallest size of maximal simplex \\ 
    \midrule
    $\mathrm{M}_{12}, \mathrm{J}_2, \mathrm{M}_{24}, \mathrm{M}_{11}, \mathrm{M}_{22}, \mathrm{M}_{23},$  
    $\mathrm{J}_1, \mathrm{J}_3, \mathrm{J}_4, \mathrm{HS}, \mathrm{He}, \mathrm{Ru}, \mathrm{Suz}, \mathrm{O'N}$ & 2 & 1 \\  
    $\mathrm{HN}, \mathrm{Fi}_{22}$ & 3 & 2 \\  
    $\mathrm{M}, \mathrm{B}, \mathrm{Th}, \mathrm{Ly}, \mathrm{McL}, \mathrm{Fi}_{24}', \mathrm{Fi}_{23},$  
    $\mathrm{Co}_2, \mathrm{Co}_1, \mathrm{J}_4, \mathrm{Co}_3$ & 3 & 1 \\  
    \bottomrule
\end{tabular}
\caption{Maximum and minimum sizes of a prime simplex for a sporadic group.}
    \label{largest_simplex}
\end{table}
We now consider the alternating and symmetric groups.
\begin{theorem} \label{snpurity}
    The prime simplicial complex of $S_n$ is pure if and only if $n\leq 4$ or $n=9.$ 
\end{theorem}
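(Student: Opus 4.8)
The plan is to translate the statement into elementary number theory via the description of $\Pi(S_n)$ recorded in the Example at the start of \Cref{sec:prelims}: a set of distinct primes is a simplex of $S_n$ precisely when its elements sum to at most $n$. Writing $P_k = p_1 + \cdots + p_k$ for the sum of the first $k$ primes, the maximum size of a simplex is $m(n)$, the largest $k$ with $P_k \le n$, and it is always attained by the greedy simplex $\{p_1,\ldots,p_{m(n)}\}$ of smallest primes. A set $S$ of distinct primes with $\mathrm{sum}(S)\le n$ is inclusion-maximal exactly when $\mathrm{sum}(S)+q>n$, where $q$ is the least prime not in $S$. By \Cref{sumprimes} the full set of primes at most $n$ is never a simplex for $n\ge 3$, so the greedy simplex is proper and there is genuine room for several maximal simplices. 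The whole problem then reduces to deciding, for each $n$, whether there exists an inclusion-maximal simplex of size strictly less than $m(n)$; purity holds if and only if no such ``short'' maximal simplex exists.

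First I would dispose of the small cases directly. For $n\le 4$ one has $m(n)\le 1$ and $P_2=5>n$, so every maximal simplex is a single prime (or empty) and $\Pi(S_n)$ is pure. For $5\le n\le 9$ we have $m(n)=2$, so purity fails if and only if some single prime $\{p\}$ is already maximal; since the least prime we could adjoin to an odd $\{p\}$ is $2$, this happens exactly when $n$ or $n-1$ is an odd prime. Checking $n=5,6,7,8$ one finds such a prime each time, whereas for $n=9$ neither $8$ nor $9$ is prime, and a short enumeration of the subsets of $\{2,3,5,7\}$ summing to at most $9$ confirms that every maximal simplex has size $2$. This isolates $n=9$ as the sole exceptional pure case above $4$.

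The substance of the proof is showing that $\Pi(S_n)$ is impure for all $n\ge 10$, where $m(n)\ge 3$. I would produce a maximal simplex of size smaller than $m(n)$ by one of two constructions. If $n$ or $n-1$ is prime, then (being odd and at least $9$) it gives a maximal simplex $\{p\}$ of size $1<m(n)$. Otherwise I use $T=\{p_1,\ldots,p_{m-2}\}\cup\{P\}$, where $m=m(n)$ and $P$ is a prime in the window $(n-P_{m-1},\,n-P_{m-2}]$. Since $n-P_{m-1}\ge P_m-P_{m-1}=p_m>p_{m-1}$, any such $P$ exceeds $p_{m-1}$, so the least prime omitted by $T$ is exactly $p_{m-1}$; the window is chosen precisely so that $\mathrm{sum}(T)\le n$ while $\mathrm{sum}(T)+p_{m-1}>n$, making $T$ a maximal simplex of size $m-1<m(n)$.

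The hard part is guaranteeing the prime $P$ in the window $(n-P_{m-1},\,n-P_{m-2}]$, which has length only $p_{m-1}$. For large $n$ this follows from the $6x/5$ refinement in \Cref{primes}: with $x=n-P_{m-1}\ge p_m$ and right endpoint $R=n-P_{m-2}<p_{m-1}+p_m+p_{m+1}\le 6p_{m-1}=6(R-x)$, one has $6x/5\le R$, so the window contains an interval $(x,6x/5)$ with $x\ge 25$ and hence a prime. This leaves only a bounded range of moderate $n\ge 10$, together with the (finitely many) $n$ whose window happens to be prime-free; these I would dispatch by a finite check, verifying in each case either that the window does contain a prime or that $n$ or $n-1$ is prime so that the size-$1$ construction applies. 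The main obstacle is exactly this interplay: one must confirm that every prime-free window is covered by the size-$1$ construction, controlling the moderate range with explicit prime-gap estimates and a finite computation rather than with a single clean inequality.
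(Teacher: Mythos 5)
Your proposal is correct, and it reaches \Cref{snpurity} by a genuinely different mechanism than the paper, even though both arguments reduce purity of $\Pi(S_n)$ to the same prime-sum combinatorics and both lean on \Cref{primes} (including its $6n/5$ refinement) together with a finite computation. The paper argues top-down: assuming purity with maximal simplex size $k$, the largest prime $p\le n$ would have to complete to a size-$k$ simplex, forcing $n \ge p+\sum_{i=1}^{k-1}p_i$; the paper then proves the negation of this inequality (its statement $P(n)$) for all $n\ge 10$ by induction on $n$, with GAP verifying the base cases $10\le n\le 129$ and the three inductive cases handled via \Cref{primes} and \Cref{sumprimes}, so its witness of impurity (the short maximal simplex through $p$) is never written down explicitly. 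You argue bottom-up, exhibiting an explicit maximal simplex of size $m(n)-1$, namely $\{p_1,\dots,p_{m-2},P\}$ with $P$ a prime in the window $\bigl(n-P_{m-1},\,n-P_{m-2}\bigr]$, or a maximal singleton when $n$ or $n-1$ is prime; your maximality check (adjoining the least omitted prime $p_{m-1}$ overshoots $n$) and the deduction $P>p_{m-1}$ from $n\ge P_m$ are both sound. Your analytic step needs $p_{m-1}\ge 25$ and $x\ge 25$, i.e.\ roughly $m\ge 11$ and $n\ge 160$, so your finite check covers about $10\le n\le 159$ — comparable to the paper's GAP range — and it does succeed: the only prime-free window there is $(7,10]$ at $n=12$, where $n-1=11$ is prime, so the singleton construction applies. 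The trade-off is that the paper's induction packages the estimates uniformly, while your construction makes the impurity explicit at the cost of a window/prime-gap analysis and a case split; your small-case treatment ($n\le 4$ and the isolation of $n=9$ via the criterion that some odd prime $p\in\{n-1,n\}$ forms a maximal singleton) matches the paper's conclusion exactly.
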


\begin{proof} 

Assume that the prime simplicial complex of $S_n$ is pure of length $k$. Let $p\leq n$ be the largest prime less than $n$. Call the first $k$ primes $p_1, \dots, p_k$. Now necessarily $k$ is the largest integer such that $n\geq\sum_{i=1}^k p_i.$ Also, for every prime at most $n$, there exist $k-1$ other primes such that their sum is at most $n$. This implies that $n\geq p+\sum_{i=1}^{k-1} p_i$. Moreover, since $n < \sum_{i=1}^{k+1} p_i$, we have that $p_{k+1}+p_k>p$. \par 
Let $P(n)$ be the statement: If $k$ is the largest integer such that $$n \geq  \sum_{i=1}^k p_i,$$ then $$n < \sum_{i=1}^{k-1} p_i + p.$$ Put $f(n)=\sum_{i=1}^{k-1} p_i + p.$ We will prove by induction that $P(n)$ holds for all $n\geq 5,$ which is a contradiction to $p$ lying in a maximal simplex of size $k,$ and hence implies that the prime simplicial complex is not pure. 
For the base case, we verify directly in GAP \cite{GAP4} that $P(n)$ holds for $10\leq n\leq 129.$ 
Now suppose $P(n)$ is true, consider $P(n+1).$ If $n+1$ is prime, then $P(n+1)$ is obviously true, so we can assume that $n+1$ is not prime. Now we have three cases to consider. \par 
\underline{Case 1: $k$ and $p$ are the same for $n$ and $n+1.$}  Here we have that $f(n)=f(n+1).$ We want to show that $n+1 < f(n).$  
Since $n\geq 129,$ we have that $p_{k-1}\geq 25,$ so by \Cref{primes} we have that $\frac{5}{6}(n+1)<p<n+1$ and also that $p< p_k+p_{k+1}< \frac{66}{25}p_{k-1}.$ Putting these together gives that $\frac{125}{396}(n+1) < p_{k-1},$ so $$f(n)= \sum_{i=1}^{k-1} p_i + p > \frac{125}{396}(n+1) + \frac{5}{6}(n+1) > n+1, $$ as required. \par 
\underline{Case 2: $n+1 \geq  \sum_{i=1}^{k+1} p_i>n.$} This implies that $\sum_{i=1}^{k+1} p_i=n+1.$ \Cref{sumprimes} implies that $p_{k+1}<p,$ so $n+1 < \sum_{i=1}^{k} p_i +p$ and $P(n+1)$ holds.\par 
\underline{Case 3: The largest prime $p'$ such that $p'< n+1$ is not $p.$} In this case we necessarily have $p'=n.$ Since $n\geq 25,$ we have that $f(n+1)=\sum_{i=1}^{k} p_i +p'>n+1,$ so $P(n+1)$ holds.
\end{proof}

\begin{theorem}\label{anpurity}
    The prime simplicial complex of $A_n$ is pure if and only if $n\leq 5,$ $n=6$ or $n=10.$ 
\end{theorem}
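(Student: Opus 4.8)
The plan is to first determine exactly which prime sets form simplices of $\Pi(A_n)$, and then to compare the maximum simplex size against the size of a maximal simplex forced to contain the largest prime below $n$; non-purity follows whenever these differ. Writing $\sigma(S)=\sum_{p\in S}p$, the first step is to prove that a set $S$ of distinct primes lies in $\Pi(A_n)$ if and only if $\sigma(S)\le n$ when $2\notin S$, and $\sigma(S)\le n-2$ when $2\in S$. The most economical element realising a prime set $S$ uses one cycle of length $p$ for each odd $p\in S$, which costs $\sigma(S\setminus\{2\})$ points and is a product of even permutations. If $2\notin S$ the permutation is already even, giving $\sigma(S)\le n$ exactly as for $S_n$. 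If $2\in S$, divisibility of the order by $2$ forces an even-length cycle, and since the sign of a permutation is the parity of its number of even-length cycles, an \emph{even} number of them is required; the cheapest choice is two transpositions, costing $4$ extra points. Hence $S$ is realisable precisely when $\sigma(S\setminus\{2\})+4=\sigma(S)+2\le n$, i.e. $\sigma(S)\le n-2$.

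With this model in hand, let $k$ be the maximum size of a simplex; greedily choosing the smallest admissible primes produces a simplex of size $k$, which is contained in a maximal simplex of the same size. For $n\le 6$ no two-element simplex exists (the smallest, $\{2,3\}$, needs $n\ge 7$), so every maximal simplex is a singleton and $\Pi(A_n)$ is pure; for $n=10$ one checks directly that the maximal simplices $\{2,3\},\{2,5\},\{3,5\},\{3,7\}$ all have size $2$, so $\Pi(A_{10})$ is pure. For $n\in\{7,8,9\}$ a size-$2$ simplex coexists with the singleton $\{p\}$ for the largest prime $p\le n$, which cannot be extended, so $\Pi(A_n)$ is not pure. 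It therefore remains to establish non-purity for all $n\ge 11$.

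For $n\ge 11$, let $p$ be the largest prime with $p\le n$ and let $a(m)$ denote the largest number of distinct odd primes of sum at most $m$. By the model, any simplex containing $p$ has its remaining primes summing to at most $n-p$ (or $n-p-4$ if it also contains $2$), so such a simplex has at most $a(n-p)+2$ elements. For $11\le n\le 29$ one has $n-p\le 5$, hence $a(n-p)\le 1$, so every maximal simplex through $p$ has size at most $2$; since $\{2,3,5\}$ gives $k\ge 3$ for $n\ge 12$ (and for $n=11$ the set $\{11\}$ is maximal while $\{2,3\}$ is a simplex), $\Pi(A_n)$ is not pure. For $n\ge 30$, \Cref{primes} forces $p>\tfrac{5}{6}n-1$, so $n-p\le\lceil n/6\rceil$ and any simplex through $p$ has at most $a(\lceil n/6\rceil)+2$ elements, whereas $k\ge a(n)$; the estimate $a(n)\ge a(\lceil n/6\rceil)+3$ then produces a maximal simplex through $p$ strictly smaller than the greedy maximal simplex, so $\Pi(A_n)$ is not pure.

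The conceptual heart is the cycle-parity characterisation in the first step, which cleanly separates the two simplex types and explains the $n-2$ shift relative to $S_n$. I expect the technical crux to be the final inequality $a(n)\ge a(\lceil n/6\rceil)+3$ for $n\ge 30$: that enlarging the prime-sum budget from roughly $n/6$ to $n$ admits at least three further small primes. This should follow from the growth of partial sums of primes together with \Cref{primes} (in the spirit of \Cref{sumprimes}), with a finite initial range of $n$ checked directly as in the proof of \Cref{snpurity}; some care is also needed throughout to treat the two simplex types separately when locating the global maximum $k$, since the optimum may or may not include the prime $2$.
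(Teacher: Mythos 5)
Your structure is essentially the paper's: the paper proves this theorem in one line by rerunning the \Cref{snpurity} induction with the modified statement ``if $k$ is largest with $n\ge 4+\sum_{i=2}^k p_i$, then $n<4+\sum_{i=2}^{k-1}p_i+p$'', which is exactly your support-sum characterisation (the $2$-part of an even permutation costs $4$ points, since an even number of even-length cycles is needed) combined with your largest-prime obstruction and the Nagura bound $p>\tfrac56 n-1$ from \Cref{primes}. Your characterisation of $\Pi(A_n)$, the verification for $n\le 10$, and the range $11\le n\le 29$ are all correct, with one loose inference: in that middle range the quoted bound $a(n-p)+2$ only gives simplices through $p$ of size at most $3$, not $2$; to get $2$ you must use the case split you yourself set up, namely $2+a(n-p-4)$ with $n-p-4\le 1$ when $2\in S$, and $1+a(n-p)$ otherwise. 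Both give $\le 2$, so the conclusion stands, but as written the step does not follow from the stated bound.

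The genuine gap is the inequality $a(n)\ge a(\lceil n/6\rceil)+3$ for $n\ge 30$, which you explicitly defer (``I expect\dots should follow''). This is the entire quantitative content of the theorem for large $n$ --- it is precisely the analogue of the induction that occupies most of the paper's proof of \Cref{snpurity} --- and your proposal contains only the expectation, not the argument. The inequality is true and fillable along the lines you indicate: writing $T(j)$ for the sum of the first $j$ odd primes and $j=a(\lceil n/6\rceil)$, so $T(j)\le\lceil n/6\rceil$ and $p_{j+1}\le T(j)$, the second part of \Cref{primes} gives $p_{j+2}+p_{j+3}+p_{j+4}<\bigl(\tfrac{6}{5}+\tfrac{36}{25}+\tfrac{216}{125}\bigr)p_{j+1}<4.4\,T(j)$ once $p_{j+1}\ge 25$, whence $T(j+3)<5.4\,\lceil n/6\rceil\le n$ for $n$ beyond a modest threshold; note that Bertrand alone (ratio $2$) is \emph{not} enough here, and the condition $p_{j+1}\ge 25$ forces a finite initial range (up to roughly $n\le 6\,T(9)=762$) to be checked by computer, rather larger than the range $n\le 129$ the paper checks for $S_n$. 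So your proposal is an incomplete proof rather than a wrong one: all the ideas match the paper's, but the one induction/estimate the paper actually carries out is the step you left unproved.
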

\begin{proof}
    The proof is similar to that of \Cref{snpurity}, with the difference that here the order 2 elements have support of size $4,$ so $P(n)$ is the statement: If $k$ is the largest integer such that $n \geq  4+\sum_{i=2}^k p_i,$ then $n < 4+\sum_{i=2}^{k-1} p_i + p.$
\end{proof}
We now proceed with some results on groups of Lie type.
\begin{proposition} \label{torusprimes}
 Let $G = \mathrm{PSL}_n(q)$ be a finite simple group of Lie type in characteristic $p$. Let $r_1, \dots , r_k$ be odd primes dividing $|G|$ not equal to $p$. For each $r_i$, write $e_i = e(r_i,q)$ and suppose that each $e_i \geq 2$. Then $r_1, \dots, r_k$ do not lie in a simplex together if and only if there is no partition $n = n_1+\dots +n_m$ such that each $r_i$ divides $n_j$ for some $j$.
\end{proposition}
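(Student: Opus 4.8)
The plan is to translate the existence of an element of order divisible by $r_1\cdots r_k$ into a statement about the maximal tori of $G$, which are parametrised by partitions of $n$. Throughout I would work with $\mathrm{GL}_n(q)$, $\mathrm{SL}_n(q)$ and $G=\mathrm{PSL}_n(q)$ simultaneously, exploiting that, since each $e_i\geq 2$, every $r_i$ is coprime to $q-1$ and hence to both the index $q-1$ of a maximal torus of $\mathrm{SL}_n(q)$ inside the corresponding torus of $\mathrm{GL}_n(q)$ and to the order $(n,q-1)$ of the centre of $\mathrm{SL}_n(q)$.

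First I would reduce to semisimple elements. The primes $r_1,\dots,r_k$ lie in a common simplex of $G$ exactly when $G$ has an element $g$ whose order is divisible by $r_1\cdots r_k$; replacing $g$ by a suitable power, I may assume $|g|=r_1\cdots r_k$. As each $r_i\neq p$, this element is semisimple, so it lies in a maximal torus of $G$. Conversely, any maximal torus of $G$ is abelian, so it contains an element of order $r_1\cdots r_k$ as soon as its order is divisible by each $r_i$ (take the product of elements of order $r_i$ drawn from the commuting Sylow $r_i$-subgroups). Thus $r_1,\dots,r_k$ lie in a common simplex if and only if some maximal torus of $G$ has order divisible by $r_1\cdots r_k$.

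Next I would identify the tori. Up to conjugacy the maximal tori of $\mathrm{GL}_n(q)$ are indexed by the partitions $n=n_1+\cdots+n_m$, the torus $T$ attached to such a partition having order $\prod_{j=1}^m (q^{n_j}-1)$. By \Cref{mult_order_lemma}, $r_i\mid q^{n_j}-1$ if and only if $e_i\mid n_j$; hence $|T|$ is divisible by $r_i$ precisely when $e_i$ divides some part $n_j$, and $|T|$ is divisible by $r_1\cdots r_k$ precisely when each $e_i$ divides some part. It then remains to pass between $\mathrm{GL}_n(q)$ and $G$. The corresponding torus of $\mathrm{SL}_n(q)$ is $T\cap\mathrm{SL}_n(q)$, of index $q-1$ in $T$ (the determinant restricts on $T$ to the surjective product of norm maps), and its image in $G$ is obtained by factoring out a central subgroup of order dividing $(n,q-1)$. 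Since every $r_i$ is coprime to $q-1$, both operations leave the $r_i$-part of the torus order unchanged, so the torus of $G$ coming from $(n_1,\dots,n_m)$ has order divisible by $r_1\cdots r_k$ if and only if $T$ does, i.e. if and only if each $e_i$ divides some $n_j$. Combining this with the previous paragraph and taking contrapositives yields the claim.

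I expect the main obstacle to be the bookkeeping in the descent from $\mathrm{GL}_n(q)$ to $\mathrm{PSL}_n(q)$: one must check that neither imposing determinant $1$ nor quotienting by the centre can destroy or create the $r_i$-part of an element's order. This is exactly where the hypothesis $e_i\geq 2$ is used, since it guarantees $r_i\nmid q-1$, so $r_i$ is coprime to every index and central order appearing and the $r_i$-primary parts of the three tori coincide. The remaining ingredients, namely the parametrisation of maximal tori by partitions with order $\prod_j(q^{n_j}-1)$, are standard, and the equivalence ``$r_i\mid q^{n_j}-1\iff e_i\mid n_j$'' is \Cref{mult_order_lemma}.
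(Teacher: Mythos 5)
Your proposal is correct and follows essentially the same route as the paper: both arguments reduce to semisimple elements lying in maximal tori parametrised by partitions of $n$, with torus orders $\prod_j(q^{n_j}-1)$ up to a factor dividing $(q-1)(n,q-1)$, and both invoke \Cref{mult_order_lemma} to convert divisibility of $q^{n_j}-1$ by $r_i$ into a divisibility condition on the parts. One point is worth flagging: the divisibility condition you prove, namely that each $e_i$ divides some part $n_j$, is the correct one, whereas the statement (and the last line of the paper's proof) literally say ``$r_i$ divides $n_j$''; this appears to be a typo in the paper, since the literal version is false (e.g.\ a single prime $r$ with $e(r,q)=2$ always forms a simplex in $\mathrm{PSL}_2(q)$, yet $r\nmid 2$), and the paper's own later applications, such as the proof of \Cref{psl2_pure}, use the $e_i$-version. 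You are also more careful than the paper on one genuine gap: the paper asserts without comment that $r_i$ divides the order of the torus of $G$ itself, while you justify the descent from $\mathrm{GL}_n(q)$ through $\mathrm{SL}_n(q)$ to $\mathrm{PSL}_n(q)$ by observing that $e_i\geq 2$ forces $r_i\nmid q-1$, so neither the determinant-one condition nor the central quotient of order dividing $(n,q-1)$ affects the $r_i$-part of the torus order. This bookkeeping is exactly where the hypothesis $e_i\geq 2$ enters, and making it explicit strengthens rather than changes the paper's argument.
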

\begin{proof}
We observe (see \cite[Lemma A.1]{BG}) that for any odd prime $s\neq p$, we have that $s \mid q^d-1$ if and only if $e(s,q)$ divides $d$.

First suppose that there is a partition $n = n_1+\dots +n_m$ such that each $r_i$ divides $n_j$ for some $j$. Now $G$ has a maximal torus $T$ of order 
\[
\frac{1}{(n,q-1)(q-1)}(q^{n_1}-1)(q^{n_2}-1) \dots (q^{n_m}-1).
\]
In particular, $T$ is an abelian subgroup of $G$, and each prime $r_i$ divides $|T|$. Hence, $T$ contains an element of order $r_1r_2 \dots r_k$, hence these primes lie in a simplex together.

Now suppose that there is no partition $n=n_1+\dots+ n_m$ such that each $r_i$ divides $n_j$ for some $j.$   We will proceed by contradiction. Assume there is $g\in G$ such that $\vert g \vert =\prod_{i=1}^k r_i.$ Then, since $(\vert g \vert , p)=1,$ $g$ is semisimple and hence lies in a maximal torus of $G$ of order \[
\frac{1}{(n,q-1)(q-1)}(q^{t_1}-1)(q^{t_2}-1) \dots (q^{t_m}-1).
\]
for some partition $n=t_1+t_2+ \dots +t_m$.
As all $r_i$s are prime, we have that for all $r_i$, there exists $t_j$ such that $r_i\mid t_j,$ which is a contradiction. 

\end{proof}

\begin{proposition}
Let $n\geq3,$ $p$ a prime and $q=p^e.$ If $\{r_1, r_2, \dots, r_k\}$ is a prime simplex of $\mathrm{PSL}_{n-2}(q)$ not containing $p,$ then $\{p, r_1, r_2, \dots, r_k\}$ is a prime simplex of $\mathrm{PSL}_{n}(q)$.
\end{proposition}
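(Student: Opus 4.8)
The plan is to produce an element of $\mathrm{PSL}_n(q)$ whose order is divisible by $p\,r_1\cdots r_k$, and then extract from it an element of order exactly $p\,r_1\cdots r_k$. This last extraction is automatic: if an element $x$ has order $N$ and $e \mid N$, then $x^{N/e}$ has order exactly $e$, so it suffices to exhibit an element whose order is divisible by each of the distinct primes $p, r_1, \dots, r_k$, i.e.\ by their product.

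Write $m = r_1 \cdots r_k$, which is coprime to $p$ since $p \notin \{r_1,\dots,r_k\}$. First I would lift the element $\bar g \in \mathrm{PSL}_{n-2}(q)$ of order $m$ to an element $h \in \mathrm{SL}_{n-2}(q)$ under the natural surjection. Writing $m'$ for the order of $h$, we have $m \mid m'$, and since $h^m$ lies in the centre $Z(\mathrm{SL}_{n-2}(q))$, whose order $(n-2,q-1)$ divides $q-1$ and is therefore coprime to $p$, the order $m'$ is coprime to $p$ as well. Next I would use the block-diagonal embedding $\mathrm{SL}_{n-2}(q) \times \mathrm{SL}_2(q) \hookrightarrow \mathrm{SL}_n(q)$ and set $x = \mathrm{diag}(h, B)$, where $B \in \mathrm{SL}_2(q)$ is a transvection, which has order $p$ because $q = p^e$. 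As $x^j = \mathrm{diag}(h^j, B^j)$ equals the identity precisely when $m' \mid j$ and $p \mid j$, the order of $x$ in $\mathrm{SL}_n(q)$ is $\mathrm{lcm}(m',p) = m'p$.

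The main step, and the only place requiring care, is the passage to $\mathrm{PSL}_n(q) = \mathrm{SL}_n(q)/Z$: I must rule out the possibility that quotienting by the centre $Z = Z(\mathrm{SL}_n(q))$ collapses the order of $x$. Here the transvection block does the work. If $x^j = \mathrm{diag}(h^j,B^j)$ equals a scalar $\zeta I_n$, then in particular $B^j$ is scalar; but the only scalar power of a transvection is the identity, so $B^j = I_2$, forcing $p \mid j$ and $\zeta = 1$, and then $h^j = I_{n-2}$, so $m' \mid j$. Hence $\langle x \rangle \cap Z = 1$, and the image $\bar x \in \mathrm{PSL}_n(q)$ still has order $m'p$. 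Since $m \mid m'$, this order is divisible by $p\,r_1\cdots r_k$, and taking the appropriate power of $\bar x$ yields an element of order exactly $p\,r_1\cdots r_k$, which shows $\{p, r_1, \dots, r_k\}$ is a prime simplex of $\mathrm{PSL}_n(q)$.

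I expect the centre-intersection computation of the previous paragraph to be the crux of the argument: the existence of a semisimple element divisible by the $r_i$ and of a unipotent element of order $p$ is routine, and combining them block-diagonally is harmless in $\mathrm{SL}_n(q)$, but without the transvection block one could not guarantee that the $p$-part of the order survives the projective quotient. A short remark would dispose of the degenerate case $n-2 = 1$ (where $k = 0$, $h = 1$, and the statement reduces to the existence of an element of order $p$ in $\mathrm{PSL}_n(q)$).
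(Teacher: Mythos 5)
Your proof is correct, and it is built on the same decomposition $n = 2+(n-2)$ as the paper's, but the execution is genuinely different and more careful. The paper disposes of the statement in one line: it asserts the containment $\mathrm{PSL}_2(q)\times\mathrm{PSL}_{n-2}(q)\leq\mathrm{PSL}_n(q)$ and invokes \Cref{directprod}, an element of order $p$ being available in the $\mathrm{PSL}_2(q)$ factor. Your version works upstairs in $\mathrm{SL}_n(q)$ and then controls the central quotient explicitly, and this buys real rigour: the block subgroup $\mathrm{SL}_2(q)\times\mathrm{SL}_{n-2}(q)$ maps into $\mathrm{PSL}_n(q)$ injectively only when $n$ is odd or $q$ is even; when $n$ is even and $q$ is odd the scalar $(-I_2,-I_{n-2})$ lies in $Z(\mathrm{SL}_n(q))$, so the image is a central product in which the two factors share an involution, and the paper's containment as literally stated needs justification. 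Even granting the embedding, one must check that passing to the projective quotient does not delete a prime from the order of the chosen element, and your transvection computation --- a power of $\mathrm{diag}(h,B)$ is scalar only if it is trivial, since the only scalar power of a transvection is the identity, whence $\langle x\rangle \cap Z(\mathrm{SL}_n(q)) = 1$ --- is precisely that missing verification, valid uniformly in all cases. Your lifting step is also sound: since $|Z(\mathrm{SL}_{n-2}(q))| = (n-2,q-1)$ divides $q-1$ it is prime to $p$, so the order $m'$ of the lift is a $p'$-number divisible by $m$, and the final powering-down argument is standard. Your aside on the degenerate case $n=3$ is the right remark to make. In short: same route as the paper, but your argument proves what the paper's one-line citation only asserts, at the modest cost of an explicit matrix computation.
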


\begin{proof}
   We have that $\rm{PSL}_2(q)\times\rm{PSL}_{n-2}(q)\leq\rm{PSL}_n(q),$ so the result follows by \Cref{directprod}.  
\end{proof}

\begin{proposition}
If $G = \mathrm{PSL}_n(q)$ with $q=p^e$ and $r$ divides $|G|$ with $e(r,q)>n-2$, then $r$ and $p$ do not lie in a prime simplex together. That is, $G$ does not contain an element of order $pr$.
\end{proposition}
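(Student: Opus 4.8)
The plan is to combine the multiplicative Jordan decomposition with the structure of centralizers of semisimple elements in $\mathrm{GL}_n(q)$, assuming throughout that $n\geq 3$ (so that $e(r,q)\geq n-1\geq 2$). Suppose for contradiction that $G=\mathrm{PSL}_n(q)$ contains an element $\bar g$ of order $pr$. I would first reduce to a statement in $\mathrm{GL}_n(q)$. Writing $G\leq \mathrm{PGL}_n(q)=\mathrm{GL}_n(q)/Z$ with $Z=Z(\mathrm{GL}_n(q))$ of order $q-1$ coprime to $p$, I lift $\bar g$ to some $g\in\mathrm{GL}_n(q)$. Since $|Z|$ is coprime to $p$, the restriction of the quotient map to any $p$-subgroup is injective, so the $p$-part $u$ of $g$ is a nontrivial unipotent element of order exactly $p$ mapping to the $p$-part of $\bar g$; similarly, extracting the $r$-part gives a semisimple element $s$ of order $r$ commuting with $u$. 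Thus it suffices to rule out a nontrivial unipotent $u\in C_{\mathrm{GL}_n(q)}(s)$ for a semisimple $s$ of order $r$.

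Next I would describe $C:=C_{\mathrm{GL}_n(q)}(s)$. Because $s$ is semisimple of order $r$ with $r$ prime, every eigenvalue of $s$ over $\overline{\mathbb{F}}_q$ is either $1$ or a primitive $r$-th root of unity, and by \Cref{mult_order_lemma} such a primitive root generates $\mathbb{F}_{q^{d}}$ with $d:=e(r,q)$, so its Galois orbit over $\mathbb{F}_q$ has size exactly $d$. Decomposing the natural module $V=\mathbb{F}_q^n$ into $s$-isotypic components yields
\[
C \;\cong\; \mathrm{GL}_{n_1}(q)\;\times\;\prod_{O}\mathrm{GL}_{m_O}\!\left(q^{d}\right),
\]
where $n_1=\dim(\ker(s-1))$, the product ranges over the Galois orbits $O$ of nontrivial eigenvalues, and $n=n_1+d\sum_O m_O$ with $\sum_O m_O\geq 1$ since $s\neq 1$.

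The crux is a one-line dimension count using the hypothesis $d=e(r,q)>n-2$, i.e.\ $d\geq n-1$. From $n=n_1+d\sum_O m_O\geq n_1+(n-1)$ I obtain $n_1\leq 1$, and then $d\sum_O m_O=n-n_1\leq n\leq d+1$ forces $\sum_O m_O=1$ (using $d\geq 2$). Hence $C\cong \mathrm{GL}_{n_1}(q)\times\mathrm{GL}_1(q^{d})$ with $n_1\in\{0,1\}$; this is a maximal torus of $\mathrm{GL}_n(q)$, isomorphic to $\mathbb{F}_{q^n}^{\times}$ (when $n_1=0$, $d=n$) or $\mathbb{F}_q^{\times}\times\mathbb{F}_{q^{n-1}}^{\times}$ (when $n_1=1$, $d=n-1$). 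Either way $C$ is abelian of order coprime to $p$, so it contains no nontrivial unipotent element, contradicting $1\neq u\in C$. This gives the result.

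I expect the main obstacle to be bookkeeping rather than conceptual. The descent from $\mathrm{PSL}_n(q)$ to $\mathrm{GL}_n(q)$ must be handled carefully so that the central subgroup $Z$ cannot absorb the $p$-part of $\bar g$ (guaranteeing that a genuine nontrivial unipotent $u$ survives), and one must note the boundary condition $d\geq 2$ that is needed to conclude $\sum_O m_O=1$, which is exactly where the assumption $n\geq 3$ enters. The genuinely content-bearing step is recognising $C_{\mathrm{GL}_n(q)}(s)$ as a maximal torus once $e(r,q)\geq n-1$; after the eigenvalue and Galois-orbit description is set up, everything reduces to the short inequality above.
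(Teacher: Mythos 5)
Your argument is correct for $n\geq 3$, and it takes a genuinely different route from the paper, which offers no argument at all: its proof is a one-line citation to \cite[Proposition~3.1(1)]{MR2213302}, the Vasil'ev--Vdovin adjacency criterion for prime graphs of simple groups. You re-derive the relevant special case from scratch: lift to $\mathrm{GL}_n(q)$, split off commuting $p$- and $r$-parts via the primary decomposition (the coprimality of $|Z|=q-1$ to $p$, and of $r$ to $q-1$ once $d:=e(r,q)\geq 2$, ensures both parts survive the lift with orders exactly $p$ and $r$), and then invoke the standard isotypic decomposition $C_{\mathrm{GL}_n(q)}(s)\cong \mathrm{GL}_{n_1}(q)\times\prod_O \mathrm{GL}_{m_O}(q^{d})$, which the inequality $d\geq n-1$ collapses to a maximal torus of order coprime to $p$. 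The steps check out: since $r$ is prime, every nontrivial eigenvalue of $s$ is a primitive $r$-th root of unity whose Frobenius orbit has size exactly $d$ (this is \Cref{mult_order_lemma}), and your counting ($n_1\leq 1$, then $\sum_O m_O=1$ because $2d\leq d+1$ is impossible for $d\geq 2$) is correct. Your proof buys self-containedness and an explanation of where the threshold $n-2$ comes from --- it is exactly the point at which the semisimple centralizer degenerates to a torus --- whereas the paper's citation buys the converse direction as well (the cited result is an if-and-only-if adjacency criterion) and uniform coverage of all ranks.

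One caveat: the proposition as stated includes $n=2$, where the hypothesis $e(r,q)>0$ is vacuous and the assertion becomes that $p$ is isolated in $\Gamma(\mathrm{PSL}_2(q))$, a fact the paper relies on in the proof of \Cref{psl2_pure}. Your blanket assumption $n\geq 3$ is not mere bookkeeping there: for $d=1$ the reduction to $\mathrm{GL}_n(q)$ fails outright, since $\mathrm{GL}_2(q)$ genuinely contains elements of order $pr$ with $r\mid q-1$ (a scalar of order $r$ times a transvection), the scalar part dying in the projective quotient. To cover $n=2$ one should instead lift only to $\mathrm{SL}_2(q)$, whose centre has order $(2,q-1)$, and use that the centralizer of a nontrivial unipotent $u$ in $\mathrm{SL}_2(q)$ is $Z(\mathrm{SL}_2(q))U$ of order $q\cdot(2,q-1)$, which contains no element whose image in $\mathrm{PSL}_2(q)$ has order $r$ (with a one-line extra check when $r=2$, where the commuting $2$-part is forced into the centre). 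As written, your proof establishes the statement only in the range $n\geq 3$, which happens to be the range in which the paper applies it, but the boundary case should be flagged or patched.
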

\begin{proof}
This is \cite[Proposition 3.1(1)]{MR2213302}.
\end{proof}

\begin{proposition}
\label{psl2_pure}
    The prime simplicial complex of $G=\rm{PSL}_2(q)$ for $q\geq 4$ in characteristic $p$ is pure if and only if $q=4,$ $5,$ $7,$ $8,$ $9$ or $17.$
\end{proposition}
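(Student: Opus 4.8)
The plan is to determine all maximal simplices of $\Pi(\mathrm{PSL}_2(q))$ explicitly and then reduce purity to a number-theoretic condition that is already resolved by \Cref{q2_divisors}. Set $d=(2,q-1)$. First I would recall the classical description of the cyclic subgroup structure of $\mathrm{PSL}_2(q)$: every element order is either $p$ (a unipotent element), a divisor of $\tfrac{q-1}{d}$, or a divisor of $\tfrac{q+1}{d}$ (coming from the split and non-split maximal tori). Two observations then pin down the maximal simplices. Since the only element orders divisible by $p$ equal $p$ itself, there is no element of order $pr$ with $r\neq p$ prime, so $\{p\}$ is a maximal simplex of size $1$. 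On the other hand $\tfrac{q-1}{d}$ and $\tfrac{q+1}{d}$ are coprime---they are coprime outright when $q$ is even, and are consecutive integers when $q$ is odd---so no prime dividing one can combine with a prime dividing the other. Hence the maximal simplices of $\Pi(\mathrm{PSL}_2(q))$ are exactly the three pairwise-incomparable sets $\{p\}$, $\pi\!\left(\tfrac{q-1}{d}\right)$ and $\pi\!\left(\tfrac{q+1}{d}\right)$.

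Because the simplex $\{p\}$ always has size $1$, purity is equivalent to both of the other maximal simplices having size $1$; that is, to $\tfrac{q-1}{d}$ and $\tfrac{q+1}{d}$ both being prime powers. The next step is to translate this into a condition on $q^2-1=(q-1)(q+1)$. The prime divisors of $q^2-1$ are exactly those of $\tfrac{q-1}{d}$ together with those of $\tfrac{q+1}{d}$, and since for $q\geq 4$ both quotients exceed $1$ and are coprime, a short argument shows that both are prime powers precisely when $q^2-1$ has exactly two distinct prime divisors.

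Finally I would apply \Cref{q2_divisors}, which lists the finitely many prime powers $q$ for which $q^2-1$ has exactly two distinct prime divisors. This reduces the problem to a short list of candidates, and for each I would write down $\tfrac{q\pm1}{d}$ and confirm whether both are prime powers, thereby reading off purity case by case. I expect the main obstacle to be bookkeeping rather than conceptual: one must handle the factor $d=(2,q-1)$ carefully and track which of $\tfrac{q-1}{2},\tfrac{q+1}{2}$ absorbs the prime $2$ in the odd case, so that both the maximal-simplex description and the equivalence with ``$q^2-1$ has exactly two prime divisors'' are correct across both parities.
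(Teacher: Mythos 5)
Your reduction is sound and, at the structural level, matches the paper's own proof: the paper likewise isolates $p$ as a maximal singleton simplex (citing \cite[Proposition 3.1]{MR2213302}) and uses \Cref{torusprimes} to see that the non-characteristic primes dividing $\frac{q-1}{d}$ (respectively $\frac{q+1}{d}$) form simplices, so that purity amounts to both torus orders being prime powers. Where you genuinely diverge is in the number theory: the paper argues by hand, noting that the even one of $\frac{q\pm1}{2}$ must be a power of $2$ and then chasing divisibility by $3$ through \Cref{2-3prime}, whereas you package the condition as ``$q^2-1$ has exactly two distinct prime divisors'' and invoke \Cref{q2_divisors}. Your equivalence is correct in both parities: for $q$ even the coprime factors $q\pm1$ must each be prime powers, and for $q$ odd the consecutive integers $\frac{q-1}{2},\frac{q+1}{2}$ are coprime, exactly one of them is even, and between them they carry every prime divisor of $q^2-1$, so $q^2-1$ has exactly two prime divisors precisely when both are prime powers. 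Your route is cleaner in that it reuses a lemma the paper already proves rather than redoing the Catalan-type casework inline.

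There is, however, a discrepancy you should be aware of: executed faithfully, your method does \emph{not} prove the statement as printed, because the statement appears to be in error. \Cref{q2_divisors} yields the candidate list $q\in\{4,5,7,8,9,17\}$, and for $q=7$ one has $\frac{q-1}{2}=3$ and $\frac{q+1}{2}=4$, both prime powers; indeed $\mathrm{PSL}_2(7)$ has element orders $\{1,2,3,4,7\}$, so all of its maximal simplices are singletons and its complex is pure. So your argument establishes purity exactly for $q\in\{4,5,7,8,9,17\}$, and the omission of $q=7$ from \Cref{psl2_pure} is a gap on the paper's side, not yours. It is traceable to \Cref{2-3prime}, whose case (2) requires $l\geq 3$ and thereby omits the consecutive prime-power pair $(3,4)=(2^2-1,2^2)$; the paper's proof inherits this when it concludes ``$q=5$ or $17$'' from ``$\frac{q-1}{2}$ is a power of $3$'', missing $\frac{q-1}{2}=3$, $\frac{q+1}{2}=4$. (Nothing downstream breaks: the paper's own \Cref{q2_divisors} correctly includes $7$, and in \Cref{pure2} the pure groups $\mathrm{PSL}_2(q)$ are excluded because their maximal simplices have size $1$, which holds for $q=7$ as well.) The only improvement to ask of you is to carry the case check through explicitly and state the corrected list, rather than leaving the final verification implicit.
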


\begin{proof}
By \cite[Proposition 3.1]{MR2213302}, $p = \mathrm{char}(\mathbb{F}_q)$ is an isolated vertex in the prime graph of $G$, hence $\Pi(G)$ is pure if and only if it has all maximal simplices of size 1. As mentioned in \Cref{ex_pure}(\ref{ex_eppo}), these are known as EPPO groups, and the result now follows from \cite[Theorem 1.7]{CAMERON2022186}.
\end{proof}


\begin{proposition}
\label{2b2_pure}
     The prime simplicial complex of $G={}^2\mathrm{B}_2(2^{2m+1})$ for $m\geq 1$ is pure if and only if $m\in \{1,2\}.$
\end{proposition}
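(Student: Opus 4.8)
The plan is to first pin down the maximal simplices of $\Pi({}^2\mathrm{B}_2(q))$ from the subgroup structure of the Suzuki groups, and then to reduce purity to an elementary statement about the prime divisors of three pairwise coprime integers. Write $q=2^{2m+1}$, so that $\sqrt{2q}=2^{m+1}$ and $|{}^2\mathrm{B}_2(q)|=q^2(q-1)(q^2+1)$, with the factorisation $q^2+1=(q-2^{m+1}+1)(q+2^{m+1}+1)$ coming from $(q+1)^2-(2^{m+1})^2=q^2+2q+1-2q$. I would use the classical description of the Suzuki groups: the maximal tori are cyclic of the pairwise coprime orders $q-1$, $q-2^{m+1}+1$ and $q+2^{m+1}+1$, every element of odd order lies in a conjugate of one of them, and the centraliser of an involution is a $2$-group, so that $2$ is an isolated vertex of $\Gamma({}^2\mathrm{B}_2(q))$ and the only $2$-power element orders are $1,2,4$. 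From these facts the maximal simplices of $\Pi({}^2\mathrm{B}_2(q))$ are exactly $\{2\}$ together with the three vertex sets $\pi(q-1)$, $\pi(q-2^{m+1}+1)$ and $\pi(q+2^{m+1}+1)$: each cyclic torus realises an element whose order is the product of all distinct primes dividing its order, while no simplex can mix primes from different tori (coprimality) or combine $2$ with an odd prime. Hence $\Pi({}^2\mathrm{B}_2(q))$ is pure if and only if the size-$1$ simplex $\{2\}$ matches every other maximal simplex, that is, if and only if each of $q-1$, $q-2^{m+1}+1$ and $q+2^{m+1}+1$ is a prime power.

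For the two small cases I would simply observe that the triple $(q-1,\,q-2^{m+1}+1,\,q+2^{m+1}+1)$ equals $(7,5,13)$ when $m=1$ and $(31,25,41)$ when $m=2$, all of which are prime powers, so $\Pi$ is pure there. For $m\geq 3$ I would argue by contradiction, assuming all three orders are prime powers. The key observation is that $2^4\equiv 1\pmod 5$ forces $5\mid q^2+1$ for every $m$. Since $q^2+1=(q-2^{m+1}+1)(q+2^{m+1}+1)$ is a product of two coprime factors (their difference is a power of $2$ and both are odd), exactly one of them, say $N$, is divisible by $5$; being a prime power, $N$ must therefore be a power of $5$, say $N=5^c$.

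On the other hand, both factors are congruent to $1\pmod{2^{m+1}}$, because in each the summands $2^{2m+1}$ and $2^{m+1}$ vanish modulo $2^{m+1}$. Thus $5^c\equiv 1\pmod{2^{m+1}}$. Since the multiplicative order of $5$ modulo $2^{m+1}$ equals $2^{m-1}$ for $m\geq 2$ (equivalently, by lifting the exponent, $v_2(5^c-1)=2+v_2(c)$ for even $c$, where $v_2$ denotes the $2$-adic valuation), this forces $2^{m-1}\mid c$, whence $N=5^c\geq 5^{2^{m-1}}$. But $N\leq q+2^{m+1}+1<2^{2m+2}$, whereas $5^{2^{m-1}}>2^{2m+2}$ for all $m\geq 3$ (at $m=3$ one has $5^4=625>256=2^8$, and the exponent grows geometrically while $2m+2$ grows linearly). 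This contradiction shows that for $m\geq 3$ at least one of the three torus orders has two distinct prime divisors, producing a maximal simplex of size at least $2$ alongside $\{2\}$, so $\Pi$ is not pure.

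The main obstacle is to avoid the naive route, which would require either showing that $q^2+1$ has at least three distinct prime divisors, or directly resolving the Lebesgue--Nagell equation $x^2+1=2\cdot 5^c$ with $x=2^{m+1}\mp 1$; neither is elementary in general, and the second runs into the difficulty that the base $2^{m+1}\mp1$ need not be a prime power, so that \Cref{dio_eqn} does not apply verbatim. The decisive trick is instead to package the failure of purity into the single statement that one coprime factor of $q^2+1$ is a power of $5$, and then to exploit the strong $2$-adic congruence $N\equiv 1\pmod{2^{m+1}}$: the order of $5$ modulo $2^{m+1}$ grows with $m$, forcing the putative exponent $c$, and hence $N$ itself, to be far too large. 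The remaining care lies entirely in the structural step, where one must justify, with the correct references to the Suzuki spectrum, both that $2$ is isolated and that each cyclic torus contributes exactly one full maximal simplex.
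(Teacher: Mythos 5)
Your proposal is correct, and while its overall skeleton matches the paper's (verify $m=1,2$ directly; use the fact that involution centralisers in ${}^2\mathrm{B}_2(q)$ are $2$-groups so that $\{2\}$ is a maximal simplex of size $1$; reduce impurity for $m\geq 3$ to showing one of $Q_\pm = q\pm\sqrt{2q}+1$ has two distinct prime divisors), the decisive number-theoretic step is genuinely different. The paper outsources that step entirely: it cites \cite[Proposition 2.6(i)]{MR2213302} for the fact that any two prime divisors of $Q_\pm$ share a simplex, and then cites \cite[Lemma 2.14]{lee2024recognisabilitysporadicgroupsisomorphism} for the statement that $Q_+$ and $Q_-$ are both prime powers if and only if $m\in\{1,2\}$, without reproving it. You instead prove the needed direction from scratch with a clean $2$-adic argument: since $2q=(2^{m+1})^2$, one has $q^2+1=Q_-Q_+$ with coprime odd factors, $5\mid q^2+1$ for every $m$, so the factor $N$ divisible by $5$ would have to equal $5^c$; both factors are $\equiv 1 \pmod{2^{m+1}}$, and since the order of $5$ modulo $2^{m+1}$ is $2^{m-1}$ (lifting the exponent), this forces $N\geq 5^{2^{m-1}}>2^{2m+2}>Q_+$ for $m\geq 3$ — all steps I have checked and which are sound. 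Your argument is self-contained, avoids both GAP (your $m=1,2$ cases use the explicit triples $(7,5,13)$ and $(31,25,41)$) and the Lebesgue--Nagell-type analysis behind \Cref{dio_eqn}, and yields slightly more: it identifies the impure factor as the one divisible by $5$ and shows the prime-power status of the other torus orders is irrelevant for $m \geq 3$. The trade-off is that you must justify the full structural description of the Suzuki spectrum (cyclic maximal tori of pairwise coprime orders $q-1$, $Q_\pm$; element orders dividing $4$, $q-1$, or $Q_\pm$), which is classical and, as you rightly note, needs a citation to Suzuki's paper, whereas the paper gets this packaged from \cite{MR2213302}; the paper's version buys brevity at the cost of an external lemma, yours buys transparency and a computation-free proof.
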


\begin{proof}
    By \cite{suzuki}, centralizers of involutions in $G={}^2\mathrm{B}_2(2^{2m+1})$ are $2$-groups, so $2$ lies in a maximal simplex of size $1.$ Hence the prime simplicial complex of $G$ is pure if and only if it is an EPPO group. By \cite[Theorem 1.7]{CAMERON2022186}, this only occurs when $m\in \{1,2\}$.
\end{proof}
\begin{proposition}
\label{2g2pure}
     The prime simplicial complex of $G \cong {}^2\mathrm{G}_2(q)$ with $q = 3^{2m+1}$ and $m \geq 1$ is never pure. 
\end{proposition}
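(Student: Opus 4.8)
The plan is to first determine the complete list of maximal simplices of $\Pi(G)$ and then reduce purity to an arithmetic statement about the four ``torus numbers'' $q-1$, $q+1$, $q-\sqrt{3q}+1$ and $q+\sqrt{3q}+1$, where $q=3^{2m+1}$ and $\sqrt{3q}=3^{m+1}$. Writing $|G| = q^3(q-1)(q+1)(q-\sqrt{3q}+1)(q+\sqrt{3q}+1)$, I would use the involution centraliser $C_G(t)\cong 2\times \mathrm{PSL}_2(q)$ together with the adjacency criterion of \cite{MR2213302} to read off $\Gamma(G)$: the vertex $2$ is joined to $3$ and to every odd prime dividing $q^2-1$; the odd primes dividing $q-1$ form a clique, as do those dividing $q+1$, and these two cliques meet only in $2$; the characteristic prime $3$ is adjacent to $2$ alone; and the primes dividing $q-\sqrt{3q}+1$ and $q+\sqrt{3q}+1$ form two further cliques that are isolated from the rest, since the corresponding maximal tori are cyclic, self-centralising, and of odd order coprime to everything else (the two numbers are coprime because any common divisor would divide $2\sqrt{3q}=2\cdot 3^{m+1}$ while being coprime to $6$). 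Since each such clique is a cyclic torus, all of its primes lie in a single simplex; hence the maximal simplices of $\Pi(G)$ are exactly $\{2,3\}$, $\pi(q-1)$, $\pi(q+1)$, $\pi(q-\sqrt{3q}+1)$ and $\pi(q+\sqrt{3q}+1)$. That $\{2,3\}$ is maximal of size $2$ follows from $3$ being adjacent only to $2$ together with the existence of an element of order $6$ in $C_G(t)$ (one may phrase the maximality via \Cref{centralizerproof}).

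Because $\{2,3\}$ is a maximal simplex of size $2$, the complex $\Pi(G)$ is pure if and only if \emph{every} maximal simplex has size $2$, that is, if and only if each of the four numbers $q-1$, $q+1$, $q-\sqrt{3q}+1$, $q+\sqrt{3q}+1$ has exactly two distinct prime divisors. It therefore suffices to prove that, for every $m\geq 1$, at least one of these four numbers fails to have exactly two prime divisors.

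To carry this out I would argue as follows. Since $q\equiv 3\pmod 4$, we have $q-1 = 2\cdot\frac{q-1}{2}$ with $\frac{q-1}{2}$ odd and $q+1 = 4\cdot\frac{q+1}{4}$ with $\frac{q+1}{4}$ odd, so $|\pi(q-1)|=2$ (resp. $|\pi(q+1)|=2$) forces $\frac{q-1}{2}$ (resp. $\frac{q+1}{4}$) to be a prime power. Applying \Cref{dio_eqn} to $3^{2m+1}-2r^a=1$ shows that, unless $m=2$, one must have $a=1$, i.e. $\frac{q-1}{2}$ prime; and the equation $3^{2m+1}+1=4s^c$ has no solution with $c$ even, since then $3^{2m+1}=4s^c-1=(2s^{c/2}-1)(2s^{c/2}+1)$ would be a product of two coprime powers of $3$. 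For the two remaining factors I would use a cyclotomic factorisation of $q^2-q+1=\Phi_6(3^{2m+1})$ (for instance $q^2-q+1 = 7\,\Phi_{6(2m+1)}(3)$ when $2m+1$ is a prime $>3$), combined with \Cref{zsigmondy} to control the primitive prime divisors of $q\pm\sqrt{3q}+1$. Combining these constraints should bound $m$, leaving finitely many cases to be verified directly in GAP \cite{GAP4}; note that the small cases already exhibit a maximal simplex of size $1$ (the primes $19,37$ for $m=1$, $271$ for $m=2$, $2269$ for $m=3$, $19441$ for $m=4$).

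The main obstacle is precisely this final, simultaneous number-theoretic step. The condition ``exactly two prime divisors'' is genuinely weaker than ``prime power,'' so the Catalan-type input of \Cref{dio_eqn} controls $q-1$ cleanly but bites only partially on $q+1$ and not directly on the products $q\pm\sqrt{3q}+1$. The real difficulty is to show the four conditions cannot hold at once; I expect this to require combining the cyclotomic factorisation, primitive-prime-divisor estimates, and small-prime congruences (such as the pattern of $7\mid q^2-q+1$ governed by $2m+1 \bmod 6$) in order to reduce to a finite, computationally checkable range of $m$.
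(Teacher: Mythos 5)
Your setup coincides with the paper's: Proposition~\ref{2g2pure} is proved there by exactly the reduction you describe, namely that $\{2,3\}$ is a maximal simplex of size $2$ (quoting \cite[Proposition 3.3]{MR2213302}), so purity would force every maximal simplex --- in particular the prime sets of the torus orders $q\pm1$ and $q\pm\sqrt{3q}+1$ --- to have size exactly $2$. The problem is that your argument never closes. The entire decisive step, showing that the four conditions ``exactly two prime divisors'' cannot hold simultaneously, is deferred to a hoped-for bound on $m$ (``should bound $m$'', ``I expect this to require\dots''), and no such bound is derived. Nor is it clear the tools you name would produce one: Zsigmondy-type results guarantee \emph{one} primitive prime divisor of $q\pm\sqrt{3q}+1$-type factors, which is consistent with, rather than contradictory to, those numbers having exactly two prime divisors, and the factorisation $q^2-q+1=7\,\Phi_{6(2m+1)}(3)$ for $2m+1$ prime $>3$ only tells you how the primes distribute between the two factors, not how many there are. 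As it stands this is a programme with correctly verified small cases, not a proof; the proposition remains open for infinitely many $m$ on your route.

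For comparison, the paper closes the argument using a \emph{single} torus: since $q\equiv 3 \pmod 4$, if $q-1$ has only two prime divisors then $q-1=2r^a$, and \Cref{dio_eqn} (the exponent $2m+1\geq 3$ being odd) leaves $q=3^5$ as the only possibility; for $q=3^5$ the number $q+\sqrt{3q}+1=271$ (misprinted as $257$ in the paper) is prime, giving a maximal simplex of size $1$. Your observation that \Cref{dio_eqn} requires \emph{both} exponents to exceed $1$ is correct and pertinent: the case $a=1$, i.e.\ $(q-1)/2$ prime, is not excluded by the lemma and does occur (e.g.\ $q=27$ with $q-1=2\cdot 13$, or $q=3^7$ with $q-1=2\cdot 1093$), so the paper's sentence ``the only possibility is $q=3^5$'' silently skips it; covering it requires falling back on the tori $q\pm\sqrt{3q}+1$, exactly as your empirical data ($19$ for $m=1$, $2269$ for $m=3$, $19441$ for $m=4$) suggest. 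So you have diagnosed precisely the hard residual case --- more carefully than the paper's own write-up --- but diagnosing it is not resolving it, and your proposal supplies no mechanism that handles it for general $m$.
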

\begin{proof}
By \cite[Proposition 3.3]{MR2213302}, $\{3,2\}$ forms a maximal simplex, so $G$ is pure if and only if it is pure with maximal simplex size $2.$
$G$ has maximal tori of sizes $q\pm 1$ and $q \pm \sqrt{3q}+1$. Now $q \equiv 3 \mod 4$, so if $q-1$ is divisible by only two distinct primes, it must be of the form $2r^a$ for some prime $r$ and $a\geq 1$. By Lemma \ref{dio_eqn}, the only possibility is $q=3^5$, but $q+ \sqrt{3q}+1=257$ is prime, and forms a maximal simplex of size one. Therefore, $G$ is not pure.
\end{proof}

\begin{proposition}
Let $G=\mathrm{PSL}_3(q)$. If $G$ is pure then $q=p^e$ for some prime $p$ and $e\geq 1$ a power of two.

\end{proposition}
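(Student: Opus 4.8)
\emph{Plan.} The plan is to determine the maximal simplices of $\Pi(\mathrm{PSL}_3(q))$ explicitly and then read off what purity demands numerically. Set $d=(3,q-1)$. Every $p'$-element is semisimple and lies in one of the three maximal tori, of orders $(q-1)^2/d$, $(q^2-1)/d$ and $(q^2+q+1)/d$; by \Cref{torusprimes} (and the cyclicity of the latter two tori) these realise the simplices $\pi(q-1)$, $\pi(q^2-1)$ and $\pi(q^2+q+1)$, each possibly with the prime $3$ deleted by passage to $\mathrm{PSL}_3(q)$. The characteristic prime $p$ is, by \cite[Proposition 3.1(1)]{MR2213302} (the case $e(r,q)>n-2=1$), adjacent only to primes dividing $q-1$; conversely a transvection commuting with $\mathrm{diag}(a,a,a^{-2})$ realises the simplex $\{p\}\cup\pi(q-1)$, in which the prime $3$ survives precisely when $9\mid q-1$ (otherwise an element of order $3p$ becomes central modulo the centre). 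First I would pin down these four candidate maximal simplices, distinguishing the regimes $3\nmid q-1$, $3\,\|\,q-1$ and $9\mid q-1$.

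Next I would translate purity into arithmetic, writing $\omega$ for the number of distinct prime divisors. Using $\gcd(q-1,q+1)\mid 2$, $\gcd(q+1,q^2+q+1)=1$ and $\gcd(q-1,q^2+q+1)\mid 3$, the sizes of the three simplices become, up to explicit corrections at the primes $2$ and $3$, equal to $1+\omega(q-1)$, $\omega(q-1)+\omega(q+1)$ and $\omega(q^3-1)-\omega(q-1)$, the last obtained from $\omega(q^3-1)=\omega(q-1)+\omega(q^2+q+1)-[3\mid q-1]$, which conveniently absorbs the prime $3$. Equating the first two forces $\omega(q+1)=1$ when $q$ is even and $\omega(q+1)=2$ when $q$ is odd, while equating the first and third forces $\omega(q^3-1)=1+2\,\omega(q-1)$ (with a parallel relation in the $3\,\|\,q-1$ regime).

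The engine is then \Cref{zsigmondy} (where $\tau$ denotes the number of divisors): for each divisor $f$ of $e$ the factor $p^f-1$ contributes a primitive prime to $q-1$, and for each divisor $f$ of $2e$ with $v_2(f)$ maximal, $p^f-1$ contributes a primitive prime to $q+1$, so $\omega(q+1)$ is bounded below by the number of divisors of the odd part $u$ of $e$. If $e$ is not a power of two then $u>1$. For $q$ even this already gives $\omega(q+1)\ge 2$, contradicting $\omega(q+1)=1$; the sole Zsigmondy exception $p=2,\ e=3$ (that is, $q=8$) I would dispatch directly, its $T_3$-simplex being $\pi(73)=\{73\}$ of size $1$ against a simplex of size $2$. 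For $q$ odd, $\omega(q+1)=2$ forces $\tau(u)\le 2$, so $u=\ell$ is an odd prime and $e=2^{\alpha}\ell$.

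The main obstacle is this last case, $q$ odd with $e=2^{\alpha}\ell$, where I would play the relation $\omega(q^3-1)=1+2\,\omega(q-1)$ against the primitive-prime bookkeeping. Writing $e=3^{\gamma}v$ with $3\nmid v$, the primes $r$ with $e(r,q)=3$ are indexed by $\{3^{\gamma+1}w:w\mid v\}$, of cardinality $\tau(v)$, whereas $\pi(q-1)$ is indexed by the $\tau(e)=(\gamma+1)\tau(v)$ divisors of $e$. When $\ell=3$ the deficit $\tau(v)<\tau(e)$ is already decisive, while for $\ell\neq 3$ the two counts are off by exactly one in the generic situation, and I would use the rigidity coming from $\omega(q+1)=2$ together with \Cref{q2_divisors}, \Cref{2-3prime} and \Cref{dio_eqn} to isolate and eliminate the finitely many tight configurations (and the small $q$). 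The step I expect to be hardest is ruling out a single cyclotomic level secretly carrying two primitive primes and thereby restoring the forbidden equality; this is precisely where the detailed number theory, rather than bare divisor counting, becomes indispensable.
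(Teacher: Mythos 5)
Your proposal does not close the proof: the case you explicitly defer --- $q$ odd with $e=2^{\alpha}\ell$ --- is the actual content of the statement in the odd case, and what you offer there is a programme (``rigidity'', ``finitely many tight configurations'') rather than an argument. Nothing in the sketch establishes that the tight configurations are finite: they are pinned down by open-ended Diophantine conditions such as $q+1=2^{a}r^{b}$ with $a\geq 2$, or cyclotomic values being prime powers, and \Cref{dio_eqn} does not reach these (it treats $p^{a}-2r^{b}=\pm1$, i.e.\ $2$-part exactly $2$); the paper's own remark that it is unknown whether there are finitely many pure $\mathrm{PSL}_3(p^{2^{i}})$ shows that finiteness questions of exactly this shape are hard. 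A secondary point: your bound $\omega(q+1)\geq\tau(u)$ undercounts. For $q$ odd the prime $2$ divides $q+1$ and is never primitive at the even levels $2^{v_2(e)+1}w$, so in fact $\omega(q+1)\geq 1+\tau(u)$ unless $e$ is odd and $p+1$ is a power of $2$ (the $n=2$ exception in \Cref{zsigmondy}). With that correction your constraint $\omega(q+1)\leq 2$ forces $u=1$ outright except in the Mersenne sub-case $p=2^{m}-1$ with $e=u$ odd; so your ``main obstacle'' is really only that sub-case --- but it is a genuine one (e.g.\ $q=7^{3}$ has $q+1=2^{3}\cdot 43$, so $\omega(q+1)=2$ and the $q^2-1$ torus cannot detect impurity), and you give no argument for it.

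The paper's proof is far leaner and needs none of your apparatus: by \cite[Lemma 3.1.15]{BG} the maximal simplex containing $p$ is exactly $\{p\}\cup S_1$ with $S_1=\pi\bigl((q-1)/(3,q-1)\bigr)$, of size $1+|S_1|$, while the abelian torus of order $(q^2-1)/(3,q-1)$ carries a simplex of size $|S_1|+|S_2|$, where $S_2$ is the set of primes dividing $q^2-1$ but not $q-1$; writing $e=2^{i}l$ with $l>1$ odd, the two levels $2^{i+1}$ and $2e$ contribute via \Cref{zsigmondy} two distinct primitive primes in $S_2$, so $|S_2|\geq 2$ and purity fails at once, with only $q=8$ checked directly. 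No determination of all maximal simplices, no equalities of simplex sizes, no third torus: a single strict inequality between two simplices suffices. It is worth noting that the delicate point is the same in both arguments: when $i=0$ and $p$ is a Mersenne prime the level-$2$ primitive prime is absent, and the paper's substitute convention (an odd prime divisor of $p-1$) in fact divides $q-1$, so there too one extra observation is needed --- for instance your Singer-torus comparison, which does dispose of $q=343$ (the $p$-simplex $\{2,3,7,19\}$ of size $4$ against the maximal simplex $\{37,1063\}$ of size $2$). But in the paper's scheme that is a one-case repair, whereas in yours it sits at the unproved heart of the argument.
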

\begin{proof}
Let $S$ be a maximal simplex containing $p$, and write $S_1 = \{r\in \mathbb{N} :  r \mid (q-1)/(3,q-1)\}$.  By \cite[Lemma 3.1.15]{BG}, we have $S = \{p\}\cup S_1$. Now $G$ has a maximal torus $T$ of size $(q^2-1)/(3,q-1)$. Since $T$ is abelian, it follows that the largest simplex corresponding to an element in $T$ has size $|S_1|+ |S_2|$, where $S_2$ is the set of primitive prime divisors of $q^2-1$. We will show that if neither of the conditions of the proposition are satisfied, then $|S_2|\geq 2$. 
We have that $q=p^e.$ Let $2^i$ be the highest power of two dividing $e,$ so $e=2^i l,$ where $l$ is odd.
If $e$ is not a power of two, then $p^{2^{i+1}}-1$ and $q^2-1 = p^{2e}-1$ are distinct and divide $q^2-1$ but not $q-1$. Each has a primitive prime divisor unless $q=8$, and we check that $G$ is impure directly.
\end{proof}
\begin{remark}
There are examples of pure groups of the form $\mathrm{PSL}_3(q)$ with $q=p^{2^i}$, e.g. $q \in \{4,16,25,49,121\}$. However, $\mathrm{PSL}_3(169)$ is impure, as $|\pi(\frac{q(q-1)}{3})|=3$, while $|\pi(\frac{q^2-1}{3})| = 4$. It is unknown whether there are finitely or infinitely many pure groups of this form.
\end{remark}

The following result bounds the rank of a group of Lie type that might have pure prime simplicial complex with maximal simplex size $k.$ Note that this fits into the theme of results in \cite{MR4371273}, bounding the number or primes dividing the order of a group whose elements are divisible by at most $k$ primes. 

\begin{proposition}
   Assume $G$ is a finite simple group of Lie type and has a pure simplicial complex of maximal complex size $k.$ Then one of the following holds. 
   \begin{enumerate}
       \item For $q\geq 4,$  $\mathrm{rank}(G)\leq\frac{(k+1)(k+2)}{2}-1.$
       \item For  $q\in \{2,3\},$ $\mathrm{rank}(G)\leq\frac{(k+4)(k+5)}{2}-1.$ 
   \end{enumerate}

\end{proposition}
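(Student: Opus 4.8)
The plan is to exploit the purity condition together with the existence of large abelian maximal tori in groups of Lie type. The key observation is that if $G$ has a maximal torus $T$, then $T$ is abelian, so every prime divisor of $|T|$ that is coprime to the characteristic $p$ contributes to a single simplex. Concretely, by \Cref{torusprimes} (and its underlying torus structure), the non-characteristic primes dividing $|T|$ all lie in a common simplex of $\Pi(G)$. Therefore, if the complex is pure of maximal simplex size $k$, then no simplex can have size exceeding $k$, which forces an upper bound on the number of distinct non-characteristic prime divisors of $|T|$ for \emph{every} maximal torus $T$. The strategy is to choose a maximal torus (or family of tori) whose order has many distinct primitive prime divisors, and then read off a bound on $\mathrm{rank}(G)$.

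First I would reduce to a statement about the order of a well-chosen split maximal torus. For a group of Lie type of rank $\ell$ over $\mathbb{F}_q$, the split maximal torus has order a product of factors of the form $q^{d_i}-1$ (up to the relevant $d_i$ coming from the partition of the rank, as in the $\mathrm{PSL}_n$ case of \Cref{torusprimes}), and more useful is to pick tori realizing cyclotomic factors $\Phi_d(q)$ for many distinct $d\le \ell$ (or $d$ up to the Coxeter number). By Zsigmondy's Theorem (\Cref{zsigmondy}), each $q^{d}-1$ has a primitive prime divisor $r_d$ for almost every $d$, and these primitive primes are \emph{distinct} across different $d$. Collecting all such $r_d$ for $d$ ranging over the degrees arising from partitions of the rank, one obtains a simplex whose size grows like the number of admissible $d$'s. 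Purity then caps this count at $k$.

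The second step is the counting that converts the torus-order bound into the claimed quadratic inequality $\mathrm{rank}(G)\le \frac{(k+1)(k+2)}{2}-1$. The point is that a maximal torus can simultaneously absorb primitive primes from several cyclotomic values at once: choosing a partition of the rank $\ell = n_1 + \dots + n_m$ and the corresponding torus of order $\prod_j (q^{n_j}-1)/(\cdots)$, the distinct primitive primes $r_{n_1},\dots,r_{n_m}$ all lie in one simplex. To maximize the simplex size one takes the partition $\ell = 1+2+3+\cdots$ into distinct parts, so that the primitive primes $r_1, r_2, \dots$ are pairwise distinct and all coexist in a single torus. If $\ell \ge \binom{t+1}{2} = \frac{t(t+1)}{2}$, then $\ell$ admits the partition into the distinct parts $1,2,\dots,t$, producing a simplex of size roughly $t$ (one primitive prime per part, plus possibly the characteristic prime $p$). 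Purity forces $t \le k+1$ or so, which rearranges to $\ell \le \frac{(k+1)(k+2)}{2}-1$. The separate, weaker bound for $q\in\{2,3\}$ is needed precisely because Zsigmondy fails for the small exceptional cases (the exclusions $q=2$, $n=1,6$ and $n=2$, $q+1=2^m$ in \Cref{zsigmondy}), so a handful of small parts may fail to yield a primitive prime; one compensates by allowing a few extra parts, shifting the bound to $\frac{(k+4)(k+5)}{2}-1$.

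The main obstacle I anticipate is the bookkeeping needed to guarantee that the primitive primes contributed by the chosen partition are genuinely distinct \emph{and} genuinely non-characteristic, while simultaneously lying in a single maximal torus of the given type. This requires care for the twisted and exceptional groups, where the relevant torus orders involve $q^d-1$ or $q^d+1$ (equivalently $\Phi_d(\pm q)$) indexed by the \emph{degrees} of the Weyl group rather than by a naive partition of the rank, and where the "rank" in the statement must be interpreted as the untwisted Lie rank. The exceptional Zsigmondy cases must be excluded uniformly, which is exactly what forces the two-case split on $q$; getting the constants in the quadratic to come out as stated hinges on how many small parts can simultaneously fail the primitive-prime condition, and I expect verifying the worst case here (rather than the triangular-number algebra, which is routine) to be the delicate part of the argument.
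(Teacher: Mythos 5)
Your proposal is correct and takes essentially the same route as the paper: both contradict the hypothesis by exhibiting an abelian maximal torus built from a partition of the rank into distinct consecutive parts, whose order accumulates pairwise distinct Zsigmondy primitive prime divisors and hence yields a simplex of size $k+1$ once the rank exceeds the triangular-number bound, with $q\in\{2,3\}$ requiring longer partitions precisely because of the Zsigmondy failures there (e.g.\ $q-1$, $q^2-1$ and $2^6-1$ contributing no new primes). The paper fills in the details you flag as delicate --- it invokes Lemma~\ref{q4lemma} to guarantee three primes from $q^4-1$ when $q\geq 4$, and disposes of the exceptional groups by noting the bound is trivial for $k\geq 3$ and exhibiting explicit tori for $k=2$ --- but these are executions of exactly your strategy.
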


\begin{proof}
First assume that $q\geq 4.$ 
We will prove this by exhibiting an abelian subgroup in each finite group of Lie type of rank greater than $\frac{(k+1)(k+2)}{2}$ whose order is divisible by at least $k+1$ distinct primes, as then those groups contain a maximal simplex of size $k+1.$ 
The sizes of the maximal tori in finite simple groups of Lie type are listed in \cite[Lemmas 1.2,1.3]{MR2213302}. 

First consider that case when $G$ is of type $\rm{P\Omega}_{2n+1}(q)$, $\rm{PSp}_{2n}(q)$ or $\rm{P\Omega}^\pm_{2n}(q).$ Then if $G$ contains a torus whose order is $$\frac{1}{4} (q^3-1)(q^4-1)\dots (q^{k+1}-1),$$ for $q$ odd or $$ (q^3-1)(q^4-1)\dots (q^{k+1}-1),$$ for $q$ even, then since $(q^4-1)$ by \Cref{q4lemma} has 3 distinct prime divisors, which by \Cref{zsigmondy} do not coincide with the primitive prime divisors of the other $(q^i-1)$ factors exhibited in the product, this torus is divisible by at least $k+1$ distinct primes. Note that for the division by $4$ it is necessary that $q$ is odd, hence for $k\geq 1$ the order of this torus will still be divisible by $2.$ The smallest groups containing such a torus have rank at least $\frac{k^2+3k-4}{2},$ so the result follows.

Now consider the case when $G\cong \mathrm{PSL}_n(q).$ If $G$ contains a torus whose order is $$\frac{1}{(n,q-1)(q-1)} (q-1)(q^2-1)(q^3-1)(q^4-1)\dots (q^{k+1}-1).$$ As shown before, $(q^3-1)(q^4-1)\dots (q^{k+1}-1)$ has at least $k+1$ distinct prime divisors, so $G$ is not pure of maximal simplex size $k.$ Hence $$\mathrm{rank}(G)\geq\frac{(k+1)(k+2)}{2}-1,$$ as required. 

Now consider the case when $G=\mathrm{PSU}_n(q).$ If $G$ contains a torus whose order is $$\frac{1}{(n,q+1)(q+1)} (q+1)(q^2-1)(q^3+1)(q^4-1)\dots (q^{k+1}-(-1)^{k+1}),$$ then this torus has at least $k+1$ distinct prime divisors, as the primitive prime divisor of $(q^{2i}-1),$ which exists by \Cref{zsigmondy} divides $(q^i+1)$ and not any $(q^j+1)$ for $j\leq i-1,$ and each $(q^{2l}-1)$ also provides a new prime divisor, which is its primitive prime divisor for $l$ even and the primitive prime divisor of $(q^l-1)$ for $l$ odd. Hence if $G$ has a torus with this order we get that $$\mathrm{rank}(G)\geq\frac{(k+1)(k+2)}{2}-1,$$ as required. 

Finally consider the exceptional groups. For $k=3,$ the bound trivially holds.
For $k=2,$ we see that for ${}^\epsilon \mathrm{E}_6(q)$ and $\mathrm{F}_4(q)$ there is a torus whose order is $q^4-1,$ so by \Cref{q4lemma} these have a simplex of size at least $3,$ for $\mathrm{E}_7(q)$ there is a torus with order $(q^4-1)(q^3-1)$ and for $\mathrm{E}_8(q)$ there is a torus of order $(q^4-1)(q^3-1)(q-1)$ by \cite[Lemma 1.3]{MR2213302},  so these also give rise to a simplex of size at least $3.$ The result follows. 

Now assume that $q\in \{2,3\}.$ The bound for exceptional groups holds trivially. For the classical groups we use similar methods to the case where $q\geq 4,$ but here we cannot assume that $q^4-1$ has $3$ distinct prime divisors, so we need to consider larger tori.  
If $G$ is of type $\rm{P\Omega}_{2n+1}(q)$, $\rm{PSp}_{2n}(q)$ or $\rm{P\Omega}^\pm_{2n}(q),$ then consider groups $G$ that contain a torus whose order is $$\frac{1}{4} (q^3-1)(q^4-1)\dots (q^{k+4}-1),$$ for $q$ odd or $$ (q^3-1)(q^4-1)\dots (q^{k+4}-1),$$ for $q$ even. For $q=3$ all $q^i-1$ in the order of the torus provide a new prime divisor, and for $q=2$ the only $q^i-1$ in the order of the torus that does not provide a new prime divisor is $2^6-1,$ but $q^4-1=5.3,$ so this torus will in both cases give rise to a simplex of size at least $k+1,$ so the result holds. Similarly for $G\cong\rm{PSL}_n(q),$ or $\mathrm{PSU}_n(q),$ having a torus of size  $$\frac{1}{(n,q-(\epsilon1))(q-(\epsilon1))} (q-(\epsilon1))(q^2-(\epsilon1)^2)(q^3-(\epsilon1)^3)(q^4-(\epsilon1)^4)\dots (q^{k+4}-(\epsilon1)^{k+4}),$$ will give rise to a simplex of size $k+1,$ so the result holds. 

\end{proof}

A natural question to ask is to classify groups whose prime simplicial complex is pure with small maximal simplex size. As noted in \Cref{ex_pure}(\ref{ex_eppo}), the groups with pure prime simplicial complex with maximal simplex size 1 were classified in \cite[Theorem 1.7]{CAMERON2022186}.

We now provide a partial classification of finite simple groups of Lie type whose prime simplicial complex is pure with maximal simplex size $2.$ 

\begin{proposition}
\label{pure2}
     Assume $G$ is a nonabelian finite simple group and has a pure simplicial complex of maximal complex size $2.$ Then one of the following holds.
    \begin{enumerate}
        \item \label{l3q_pure_odd} $G \cong \mathrm{PSL}_3(q)$ with $q$ an odd prime of the form $q=3.2^k+1$ with $k\geq 1$ and $k\not\equiv 1 \mod 3$.
        \item $G \cong \mathrm{PSL}_3(q)$ with $q \in \{4,16\}$ and $G$ is pure with maximal simplex size 2. 
         \item \label{unitary_1}$G \cong \mathrm{PSU}_3(q)$ with $q$ an odd prime of the form $q=3.2^k-1$ with $k\geq 1.$
        \item \label{unitary_2} $G \cong \mathrm{PSU}_3(2^e)$ with $e$ a prime, $2^e-1$ a Mersenne prime and $3\mid q+1$.
        \item $G \cong \mathrm{\rm{PSp}}_4(8), \mathrm{PSL}_3(9)$ or $A_{10},$ and $G$ is pure with maximal simplex size $2.$
    \end{enumerate}
    
\end{proposition}

\begin{proof}
First consider the case when $G$ is sporadic or alternating. Then, by \Cref{sporpurity} and \Cref{anpurity} we have $G\cong A_{10}.$
From now on assume that $G$ is a finite simple group of Lie type. 
We begin by supposing $q\geq 4$; the cases $q=2,3$ will be dealt with separately at the end.
    The classical groups with rank at least $4,$ with the exception of $\rm{PSL}_5(q)$ and $\mathrm{PSU}_5(q),$ do not arise, as they contain a torus whose order is divisible by $\frac{q^4-1}{(4,q-1)},$ hence by \Cref{q4lemma} they have a simplex of size at least $3.$  
   Now suppose $G$ is isomorphic to one of the following: $\cong \mathrm{G}_2(q),$  $\rm{P\Omega}_5(q),$ $\rm{PSp}_4(q),$  $\rm{P\Omega}_7(q),$ $\rm{PSp}_6(q),$ $\rm{PSL}_4(q),$ $\mathrm{PSU}_4(q),$ $\rm{PSL}_5(q)$  or $\mathrm{PSU}_5(q).$ All of these groups have a torus of size divisible by $\frac{q^2-1}{4},$ so by \Cref{q2_divisors} they contain a simplex of size $3$ unless $q=4,$ $5,$ $7,$ $8,$ $9$ or $17.$ In these cases we check by direct calculation that the result holds. 
    
    By Propositions \ref{psl2_pure} and \ref{2b2_pure}, no groups isomorphic to $\rm{PSL}_2(q)$ or ${}^2\mathrm{B}_2(q)$ arise, since each has a maximal simplex of size one.


    Now suppose that $G \cong {}^3 D_4(q)$ or $\mathrm{F}_4(q)$. Then $G$ has a torus $T$ of size $|T| = (q^3- 1)(q+ 1)$. If $q\neq 8$, $|T|$ is divisible by the (distinct) primitive prime divisors of $q-1$, $q^2-1$ and $q^3-1$, hence has a simplex of size at least 3. If $q=8$, a direct calculation shows $|T|$ is divisible by three primes.

    Similarly, suppose that $G \cong {}^2\mathrm{F}_4(q)$ with $q=2^{2m+1}$ and $m\geq 2.$ Now $G$ has a torus of order $(q^2-1)=(q-1)(q+1),$ which has at least $2$ distinct prime divisors by Lemma \ref{zsigmondy}. If $(q^2-1)$ had exactly $2$ distinct prime divisors, then we can conclude that $q-1$ and $q+1$ are prime powers, hence they are primes unless $q=8.$ If $q=8,$ a direct calculation shows that $G$ is not pure of a maximal simplex size $2$. If $q-1$ and $q+1$ are both primes, then $q-1$ is a Mersenne prime and $q+1$ is a Fermat prime, so $2m+1=2,$ which is a contradiction, so $G$ is not pure of maximal simplex size $2.$ 

Now suppose $G \cong\rm{PSL}_3(q)$ with $q=p^e$ with $e\geq 1$. Then $G$ has tori of orders $(q-1)^2/\alpha$, $(q^2-1)/\alpha$ and $(q^2+q+1)/\alpha$, where $\alpha = (3,q-1)$. We first consider the case where $q$ is even. In this case, $(q^2-1)=(q-1)(q+1)$ with $q-1$, $q+1$ coprime. 
If $(q^2-1)/\alpha$ has exactly two distinct prime divisors, then either $q=8$ (and not pure by direct computation), or $q+1$ must be a Fermat prime. In the latter case, $e=2^i$ by Lemma \ref{2-3prime} and $(q^2-1)$ is divisible by the primitive prime divisors of $2^{2^j}-1$ for each $1\leq j \leq i+1$. In particular, if $i>2$, then $(q^2-1)$ is divisible by at least four distinct primes and the result follows. We check directly that $\rm{PSL}_3(4)$ and $\rm{PSL}_3(16)$ are pure with maximal simplex size 2.

Now assume $q=p^e$ is odd. If $q$ is not prime, then $(q^2-1)$ is divisible by the primitive prime divisors of $p-1$, $p^2-1$, $p^e-1$, and $p^{2e}-1$. At least three of these are distinct unless $q=9$, in which case the group is pure with maximal simplices of size 2. So suppose that $q$ is prime.

If $\alpha=1$, then by Proposition \ref{q2_divisors}, $q\in \{5,17\}$. A direct calculation shows that $\Pi(\rm{PSL}_3(5))$ and $\Pi(\rm{PSL}_3(17))$ have maximal simplices $\{37\}$ and $\{307\}$ respectively and hence are not pure. 

So now suppose $\alpha=3$, so $3\mid q-1$. If $(q^2-1)/3$ has only two distinct prime divisors, then we must have $q-1 = 3.2^k$ and $q+1 = 2^l.r^m$ for some prime $r$ and $k,l,m \in \mathbb{Z}_{>0}$. If $k \equiv 1\mod 3$, then $q$ is divisible by 7, hence $q=7$ as it is prime. But $\Pi(\rm{PSL}_3(7))$ has a maximal simplex $\{19\}$, hence is not pure.

Now suppose $G \cong \mathrm{PSU}_3(q)$ with $q=p^e$ with $e\geq 1$. Then $G$ has tori of orders $(q+1)^2/\beta$, $(q^2-1)/\beta$ and $(q^2-q+1)/\beta$, where $\beta = (3,q+1)$.  

First assume that $(q^2-1)/\beta$ and $q^2-1$ have the same set of prime divisors, so by \Cref{q2_divisors}, $(q^2-1)/\beta$ has at least $3$ distinct prime divisors unless $q\in \{4,5,7,8,9,17\}.$ In the latter case we check by direct computation that $G$ is not pure.


Now assume that $q$ is odd and that $(q^2-1)/\beta$ and $q^2-1$ do not have the same set of prime divisors, so $\beta=3$ and $q+1=3r$ such that $(3,r)=1.$ We may assume that $q+1=3.2^k,$ as otherwise $(q^2-1)/\beta$ would have at least 3 distinct prime divisors. If $e\geq 2,$ then $(q^2-1)/\beta=\frac{(p^e-1)(p^e+1)}{3}.$ Here $(p^e-1)(p^e+1)$ has at least $3$ distinct prime divisors, the primitive prime divisors of $p-1,$ $p^e-1$ and $p^{2e}-1.$ Now $p-1$ is either divisible by two distinct primes, in which case $(q^2-1)/\beta$ has at least $3$ distinct prime divisors, so $G$ is not pure of maximal simplex size $2,$ or $p-1$ is a power of two. Hence $p$ is a Fermat prime, and $p-1=2^{2^x}$ for some $x\geq 0.$ Now $p+1$ is divisible by $2$ and $3,$ and if it had any other prime divisors we would get a simplex of size at least $3,$ so we can assume that $p+1=3.2^l.$ However, since $\frac{p-1}{2}$ and $\frac{p+1}{2}$ are consecutive numbers, it follows that $l=1,$ so $p=5.$ Now if $p=5$ and $e$ is even, then $3$ divides $5^e-1,$ so we do not need to consider that case, as then $(3,q+1)=1$. If $e=2m+1$ for some $m,$ then $5^{2m+1}+1$ has a factor of $5+1=6,$ so $5^{4m+2}-1$ has a primitive prime divisor $r$ not equal to $2$ or $3.$ Also $5^{2m+1}-1$ has a primitive prime divisor $s$ not equal to $2,$ $3$ or $r,$ so $(q^2-1)/\beta$ has at least $3$ distinct prime divisors and gives rise to a simplex of size $3$. Hence $q$ is prime.

Now assume that $q=2^e$ and that $(q^2-1)/\beta$ and $q^2-1$ do not have the same set of prime divisors, so $\beta=3$ and $q+1=3r^k$ such that $(3,r)=1.$ If $e$ is even, then $2^e-1$ is divisible by $3,$ so $(3,q+1)=1,$ so we do not need to consider this case. If $e$ is odd, then $2^e-1$ is a prime power, hence a Mersenne prime, so $e$ is prime.  

Suppose that $G \cong {}^2\mathrm{G}_2(q)$ with $q = 3^{2m+1}$ and $m \geq 1$. Then by \Cref{2g2pure}, $G$ is not pure.  


Finally, suppose that $q\in \{2,3\}$. We check directly that the groups in the following list each contain an element of order a product of three primes:
\[
\{\mathrm{PSL}_7(2),\mathrm{PSL}_5(3), \mathrm{PSU}_7(2), \mathrm{PSU}_5(3),\mathrm{\rm{PSp}}_8(2), \mathrm{\rm{PSp}}_6(3),\mathrm{P}\Omega_8^-(3),\mathrm{P}\Omega^-_8(2)\}.
\]
Each classical group over $\mathbb{F}_2$ or $\mathbb{F}_3$ of larger rank contains a subgroup isomorphic to one of these groups, or a central extension of one. Hence, each also has an element of order a product of three primes, and so cannot arise. We compute directly that no classical groups of smaller ranks with $q=2,3$ are pure with a maximal simplex of size 2. If $G$ is instead isomorphic to an exceptional group, we either check directly that $G$ has a maximal simplex of size not equal to 2, or use \cite{craven2023maximal} to show that $G$ has a maximal subgroup with an element of order a product of three distinct primes.
\end{proof}

\begin{remark}

   There are genuine examples and non-examples in Proposition \ref{pure2}(\ref{l3q_pure_odd}). For example, if $k=2,5$ so that $q=13,97$, then $(q^2+q+1)/3$ is prime, so $\Gamma(\rm{PSL}_3(q))$ has a maximal simplex of size 1. If instead $k=6$, we note that $\Gamma(\rm{PSL}_3(193))$ is pure with maximal simplex size of 2. It is unknown where there are any pure examples arising in parts (\ref{unitary_1}) or (\ref{unitary_2}).

\end{remark}

\bibliographystyle{abbrv}
\bibliography{refs.bib}
\end{document}